\newtheorem{theorem}{Theorem}[section]
\newtheorem{corollary}[theorem]{Corollary}
\newtheorem{lemma}[theorem]{Lemma}
\newtheorem{prop}[theorem]{Proposition}
\theoremstyle{definition}
\newtheorem{definition}[theorem]{Definition}
\newtheorem{example}[theorem]{Example}
\newtheorem{remark}[theorem]{Remark}
\newtheorem*{ack}{Acknowledgments}
\newcommand{\Z}{\mathbb{Z}}
\newcommand{\Q}{\mathbb{Q}}
\newcommand{\C}{\mathbb{C}}
\renewcommand{\L}{\mathbf{L}}
\newcommand{\TT}{\mathbb{T}}
\newcommand{\PP}{\mathbb{P}}
\DeclareMathAlphabet{\pazocal}{OMS}{zplm}{m}{n}
\newcommand{\XX}{{\pazocal X}}
\newcommand{\CC}{{\pazocal{C}}}
\newcommand{\RR}{{\mathcal R}}
\newcommand{\VV}{{\mathcal V}}
\newcommand{\F}{{\mathcal{F}}}
\newcommand{\cC}{{\mathcal{C}}}
\newcommand{\E}{\mathsf{E}}
\newcommand{\V}{\mathsf{V}}
\newcommand{\W}{\mathsf{W}}
\newcommand{\g}{{\mathfrak{g}}}
\newcommand{\f}{{\mathfrak{f}}}
\newcommand{\h}{{\mathfrak{h}}}
\newcommand{\kk}{{\mathfrak{k}}}
\newcommand{\gl}{{\mathfrak{gl}}}
\renewcommand{\sl}{{\mathfrak{sl}}}
\newcommand{\sol}{{\mathfrak{sol}}}
\newcommand{\m}{{\mathfrak{m}}}
\newcommand{\fa}{{\mathfrak{a}}}
\newcommand{\cart}{{\mathfrak{t}}}
\newcommand{\n}{{\mathfrak{n}}}
\newcommand{\heis}{{\mathfrak{heis}_3}}
\newcommand{\G}{{\Gamma}}
\DeclareMathOperator{\rank}{rank}
\DeclareMathOperator{\gr}{gr}
\DeclareMathOperator{\im}{im}
\DeclareMathOperator{\id}{id}
\DeclareMathOperator{\ab}{{ab}}
\DeclareMathOperator{\GL}{GL}
\DeclareMathOperator{\SL}{SL}
\DeclareMathOperator{\Sp}{Sp}
\DeclareMathOperator{\PSL}{PSL}
\DeclareMathOperator{\Hom}{{Hom}}
\DeclareMathOperator{\spn}{span}
\DeclareMathOperator{\Der}{{Der}}
\DeclareMathOperator{\ad}{ad}
\DeclareMathOperator{\Lie}{Lie}
\DeclareMathOperator{\TC}{TC}
\DeclareMathOperator{\supp}{supp}
\DeclareMathOperator{\ideal}{ideal}
\DeclareMathOperator{\init}{in}
\newcommand{\same}{\Longleftrightarrow}
\newcommand{\surj}{\twoheadrightarrow}
\newcommand{\inj}{\hookrightarrow}
\newcommand{\isom}{\xrightarrow{\,\simeq\,}}
\def\dot{\mathchar"013A}
\newcommand{\hdot}{{\raise1pt\hbox to0.35em{\Huge $\dot$}}}
\newcommand{\cdga}{\ensuremath{\mathsf{cdga}}}
\newcommand{\raag}{\ensuremath{\mathsf{raag}}}
\newcommand{\DG}{\ensuremath{\mathsf{dg}}}
\newcommand{\dgla}{\ensuremath{\mathsf{dgla}}}
\begin{document}

\title[Flat connections and resonance varieties]{%
Flat connections and resonance varieties: from rank one to higher ranks}

\author[A.D.~Macinic]{Daniela Anca~M\u acinic$^1$}
\address{Simion Stoilow Institute of Mathematics,
P.O. Box 1-764, RO-014700 Bucharest, Romania}
\email{Anca.Macinic@imar.ro}
\thanks{$^1$ Supported by a grant of the Romanian National Authority for Scientific
Research, CNCS  UEFISCDI, project number PN-II-RU-PD-2011-3-0149}

\author[S.~Papadima]{\c Stefan Papadima$^2$}
\address{Simion Stoilow Institute of Mathematics,
P.O. Box 1-764, RO-014700 Bucharest, Romania}
\email{Stefan.Papadima@imar.ro}
\thanks{$^2$Partially supported by 
the Romanian Ministry of 
National Education, CNCS-UEFISCDI, grant PNII-ID-PCE-2012-4-0156}

\author[C.R.~Popescu]{Clement Radu Popescu$^3$}
\address{Simion Stoilow Institute of Mathematics, 
P.O. Box 1-764, RO-014700 Bucharest, Romania}
\email{Radu.Popescu@imar.ro}
\thanks{$^3$Supported by a grant of the Romanian
National Authority for Scientific Research, CNCS-UEFISCDI, 
project number PN-II-RU-TE-2012-3-0492}

\author[A.I.~Suciu]{Alexander~I.~Suciu$^4$}
\address{Department of Mathematics,
Northeastern University,
Boston, MA 02115, USA}
\email{a.suciu@neu.edu}
\thanks{$^4$Partially supported by
NSF grant DMS--1010298 and NSA grant H98230-13-1-0225}

\subjclass[2010]{Primary
55N25, 
55P62.  
Secondary
14F35, 
20F36,  
20J05.  
}

\keywords{Resonance variety, characteristic variety, differential graded algebra,
Lie algebra, flat connection, quasi-projective manifold, Artin group.}

\begin{abstract}
Given a finitely-generated group $\pi$ and a linear algebraic group $G$, 
the representation variety $\Hom(\pi,G)$  has a natural filtration by the 
characteristic varieties associated to a rational representation of $G$.  
Its algebraic counterpart, the space of $\g$-valued flat connections 
on a commutative, differential graded algebra $(A,d)$ admits a filtration by the 
resonance varieties associated to a representation of $\g$.  We establish 
here a number of results concerning the structure and qualitative properties 
of these embedded resonance varieties, with particular attention to the case when the 
rank $1$ resonance variety decomposes as a finite union of linear subspaces. 
The general theory is illustrated in detail in the case when $\pi$ is either an 
Artin group, or the fundamental group of a smooth, quasi-projective variety.
\end{abstract}

\maketitle
\setcounter{tocdepth}{1}
\tableofcontents
\newpage
\section{Introduction}
\label{sect:intro}

\subsection{Representation varieties and flat connections}
\label{subsec:intro reps}

One way to understand a finitely generated group $\pi$ is to pick 
a complex linear algebraic group $G$, and look at the resulting 
{\em representation variety}, $\Hom(\pi,G)$. In the rank $1$ case, one 
finds the character group $\TT(\pi)=\Hom(\pi,\C^{\times})$, which 
is simply the Pontryagin dual of the abelianization of $\pi$. 
In general, though, the $G$-representation variety is highly 
complicated, yet carries intricate information on the given 
group $\pi$, see for instance \cite{CS, GM, KM}.
The Kapovich--Millson universality theorem from \cite{KM} describes 
the complexity of representation varieties in a precise way: the analytic 
germs away from the trivial representation $1\in \Hom(\pi, \PSL_2)$, 
where $\pi$ runs through the family of Artin groups, are as complicated 
as arbitrary germs of affine varieties defined over $\Q$.

To make things simpler, it pays off to look at a closely related 
algebraic analogue of $\Hom(\pi,G)$.  Given a commutative, 
differential graded algebra (for short, a $\cdga$) $A=(A^\hdot, d)$ 
and a Lie algebra $\g$, the tensor product $A\otimes \g$ supports 
a graded, differential Lie algebra structure, with Lie bracket 
$[\alpha\otimes x, \beta\otimes y]= \alpha \beta \otimes [x,y]$
and differential $\partial (\alpha\otimes x) = d \alpha \otimes x$.
A $\g$-valued {\em flat connection} on $A$ is a tensor $\omega\in A^1\otimes \g$ 
that satisfies the Maurer--Cartan equation, 
$\partial\omega + \tfrac{1}{2} [\omega,\omega] = 0$.

If both $A^1$ and $\g$ are finite-dimensional, the set $\F(A,\g)$ 
of $\g$-valued flat connections on $A$ is a closed subvariety of 
the affine space $A^1\otimes \g$, containing the natural basepoint $0$.  
Moreover, if $A$ is a $\cdga$ model for a classifying space for $\pi$, 
and $\g$ is the Lie algebra of $G$, then $\F(A,\g)$ may be viewed 
as an infinitesimal version of $\Hom(\pi,G)$.
In the rank $1$ case, things are again very simple: if $A$ is 
connected (i.e., $A^0= \C \cdot 1$), then $\F(A,\C)= H^1(A)$. 
In higher ranks, we pay special attention to the Lie algebra 
$\sl_2$ and its standard Borel subalgebra $\sol_2$, in which 
case the variety of flat connections is more amenable to 
concrete descriptions.  

\subsection{Cohomology jump loci}
\label{subsec:intro cjl}

Both the representation variety and the variety of flat 
connections admit natural stratifications according to 
the dimensions of the relevant cohomology groups. 

In the first case, let $X$ be a connected CW-complex 
with $\pi_1(X)=\pi$, and assume $X$ has finite $q$-type 
(i.e., $X$ has finite $q$-skeleton), for some $q\ge 1$. 
For each rational representation 
$\iota\colon G\to \GL(V)$, and each integer $i\le q$, the sets 
\begin{equation}
\label{eq:intro cv}
\VV^i_r(X, \iota)=\{\rho \in \Hom(\pi, G) \mid
\dim_{\C} H^i(X, {}_{\iota\rho}V)\ge r\}
\end{equation}
are Zariski-closed in the representation variety.  Together with 
the ambient space, these {\em characteristic varieties}\/ 
(in degree $i$ and depth $r$) are homotopy-type invariants of $X$,  
encoding a plethora of information on the topology of the space $X$ 
and its covers, and reflecting certain geometric structures that $X$ 
may possess.  These varieties (also known as the cohomology 
jumping loci, or  the Green--Lazarsfeld sets in the 
algebro-geometric setting) have been the subject of much 
investigation, see for instance \cite{Ar, DP, DPS, PS-plms, Su-imrn}.  

In the second case, let $A$ be a connected $\cdga$, 
and assume $A$ has finite $q$-type (i.e., $A^{\le q}$ 
is finite-dimensional).  Given a 
representation $\theta\colon \g\to \gl(V)$, one may turn 
the tensor product $A\otimes V$ into a cochain complex, 
with differential $d_{\omega}=d\otimes \id_V + \ad_{\omega}$ 
depending on $\omega$ and $\theta$, see \eqref{eq:adw}. 
If both $\g$ and $V$ are finite-dimensional, the sets 
\begin{equation}
\label{eq:intro res}
\RR^i_r(A, \theta)= \{\omega \in \F(A,\g)
\mid \dim_{\C} H^i(A \otimes V, d_{\omega}) \ge  r\}
\end{equation}
are Zariski-closed in the parameter space for 
flat connections.  These {\em resonance varieties}\/ are 
more computable, and carry information in their own right 
about a space $X$ modeled by $A$.  

The simplest situation is that of the rank $1$ resonance varieties 
$\RR^i_r(A)$, when $\g=\C$ and $\theta=\id_{\C}$.  In the particular 
case when $A=H^{\hdot}(X,\C)$ is the cohomology algebra of a space $X$, 
endowed with the zero differential, these are the much-studied 
resonance varieties of $X$, denoted by $\RR^i_r(X)$.

\subsection{Overview of results}
\label{intro:overview}

The resonance and characteristic varieties are intimately related, 
at least around the origin.
More precisely, let $X$ be a space of finite $q$-type, and suppose 
Sullivan's de Rham model $\Omega^{\hdot}(X)$ has the same $q$-type 
as a $\cdga$ $A$ of finite $q$-type.  Let $\iota\colon G\to \GL(V)$ be a 
rational representation, and let $\theta\colon \g \to \gl(V)$
be its tangent representation.  

A foundational result from \cite{DP}, 
extending work from \cite{DPS}, states the following: There is an analytic 
isomorphism between the germ of $\F(A,\g)$ at the trivial connection $0$ 
and the germ of $\Hom(\pi,G)$ at the trivial representation $1$, which 
restricts to analytic isomorphisms 
$\RR^i_r(A,\theta)_{(0)} \cong \VV^i_r(X,\iota)_{(1)}$,
for all $i\le q$ and $r\ge 0$.

In this note, we study the {\em global}\/ structure of the resonance varieties 
sitting inside the variety of $\g$-valued flat connections associated to a 
$\cdga$ $A$. When $A$ models $X$, and both objects satisfy appropriate 
finiteness assumptions, this approach provides valuable topological 
information on the space $X$.  We obtain both general results, and 
explicit global descriptions for classes of $\cdga$'s related to important 
examples from group theory and algebraic geometry.

Our approach is motivated by the following natural question, 
which provides the main theme for the paper:    
Given a $\cdga$ $A$, how much information on its higher rank, 
embedded jump loci, $(\F(A,\g), \RR^i_r(A, \theta))$, can be 
extracted from the knowledge of its rank $1$ embedded jump 
loci, $(\F(A,\C), \RR^i_r(A))$?

\subsection{A tangent cone formula}
\label{subsec:intro tc}

It is easy to see that the construction $\F(A,\g)$ is a bifunctor. We devote 
a large part of \S\ref{sect:prel} to a general analysis of the functoriality 
properties of  resonance varieties, which are much more delicate. 
This leads us to the following, rather general ``tangent cone"-type result.

\begin{theorem}  
\label{thm:intro1}
Assume $A$ has the same $q$-type as $\Omega^{\hdot}(X)$, where 
$A$ and $X$ are of finite $q$-type. Then,
for all $i\le q$ and all $r\ge 0$, the tangent cone at $0$ to the 
resonance variety $\RR^i_r(A)$ is contained in $\RR^i_r(H^{\hdot}(A))$.

If, moreover, $A$ is defined over $\Q$, the
identification $H^1(\Omega^{\hdot}(X)) \cong H^1(A)$
preserves $\Q$-structures, and $A$ has positive weights, 
then $\RR^i_r(A)\subseteq \RR^i_r(H^{\hdot}(A))$, in the same range.
\end{theorem}

The first part of the theorem can be viewed as a rational homotopy 
analogue of a result of Libgober \cite{Li}, which says the following:  
if $X$ is a connected, finite-type CW-complex, then,  for all $i$ and $r$, 
the tangent cone at $1$ to the characteristic variety $\VV^i_r(X)$ is contained 
in $\RR^i_r(X)$.

Examples of spaces $X$ of finite $q$-type equipped with a finite $q$-type 
$\cdga$ $A$ having the same $q$-type as $\Omega^{\hdot}(X)$ abound.  
For instance, if $X$ has finite $q$-type and is $q$-formal, we may take 
$A$ to be the cohomology algebra $H^{\hdot}(X,\C)$, endowed with the 
zero differential.  When $X$ is a compact K\"{a}hler manifold, and thus 
$X$ is formal, these properties hold even for $q=\infty$. 
Another important class of examples (where  $q=\infty$) consists of 
irreducible, smooth, quasi-projective varieties (for short, quasi-projective 
manifolds), which come equipped with one of Morgan's Gysin models \cite{Mo}.  
In both the $q$-formal and the quasi-projective cases, the additional 
conditions from Theorem \ref{thm:intro1} also hold. 
Finally, if $X=K(\pi,1)$ is a classifying space for a finitely-generated 
nilpotent group $\pi$, we may take $A$ to be the cochain 
algebra of the Malcev--Lie algebra associated to $\pi$.
We refer to \cite{DP} for more examples and details related to this topic.

\subsection{Essentially rank $1$ flat connections}
\label{subsec:intro rk1}

Returning now to the general setup, assume $A$ is $1$-finite and $\g$ is 
finite-dimensional.  Consider the closed subvariety through the origin, 
$\F^1(A,\g)\subseteq \F(A,\g)$, consisting of all tensors of the form 
$\eta \otimes g$ with $d \eta=0$.  
Given a finite-dimensional representation $\theta\colon \g\to \gl (V)$, 
we let $\Pi(A,\theta)\subseteq \F^1(A,\g)$ be the subvariety 
of all such tensors which also satisfy $\det(\theta(g))=0$. Note that 
$\F^1(A,\g)$ depends only on the vector spaces $H^1(A)$ and $\g$, while  
$\Pi(A,\theta)$ depends only on $H^1(A)$ and the representation $\theta$.
Both these varieties are easy to compute in any concrete situation.  
For instance, in the rank $1$ case, $\F^1(A,\C)= \F(A,\C)$ and $\Pi(A,\theta)= \{ 0\}$.
Using these notions, we show in \S\ref{subsec:rk1 res} that the trace of higher 
rank depth $1$ resonance on $\F^1(A,\g)$ is determined by rank $1$ resonance, 
in the following precise way.
 
\begin{theorem}
\label{thm:intro2}
Let $\omega=\eta\otimes g$ be an arbitrary element of
$\F^1(A, \g)$.  Then $\omega$ belongs to $\RR^k_1(A,\theta)$
if and only if there is an eigenvalue $\lambda$ of $\theta(g)$ such that
$\lambda \eta$ belongs to $\RR^k_1(A)$. Moreover,
\[
\Pi(A,\theta)\subseteq \bigcap_{k: H^k(A)\ne 0} \RR^k_1(A, \theta).
\]
\end{theorem}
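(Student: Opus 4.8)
The plan is to work directly from the definition of the twisted differential $d_\omega = d\otimes\id_V + \ad_\omega$ on $A\otimes V$, using the special structure of an element $\omega=\eta\otimes g\in\F^1(A,\g)$. Since $d\eta=0$, for $\alpha\otimes v\in A^\hdot\otimes V$ one has $d_\omega(\alpha\otimes v)=d\alpha\otimes v+(-1)^{|\alpha|}\,\eta\alpha\otimes\theta(g)v$. The key observation is that $\theta(g)$ acts $A$-linearly (in the appropriate graded sense) on the tensor factor $V$, so if we decompose $V$ into generalized eigenspaces of $\theta(g)$, the complex $(A\otimes V, d_\omega)$ splits as a direct sum over those eigenspaces, and on each piece $d_\omega$ is (up to the nilpotent part of $\theta(g)$) the twisted differential $d_{\lambda\eta}$ of the rank $1$ complex with parameter $\lambda\eta\in H^1(A)$. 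First I would make this splitting precise; then I would argue, via a filtration by powers of the nilpotent part and the resulting spectral sequence (or a direct induction on the length of Jordan blocks), that $H^\hdot$ of the generalized $\lambda$-eigenspace summand is nonzero in degree $k$ if and only if $H^k(A\otimes V_\lambda, d_{\lambda\eta})\neq 0$ where $V_\lambda$ is the honest eigenspace; since the rank $1$ complex with parameter $\lambda\eta$ is just finitely many copies of $(A^\hdot, d_{\lambda\eta})$ — i.e.\ $(A^\hdot,d)$ twisted by the rank $1$ flat connection $\lambda\eta$ — its cohomology in degree $k$ is nonzero precisely when $\lambda\eta\in\RR^k_1(A)$. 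Putting the summands together yields $\dim_\C H^k(A\otimes V,d_\omega)\ge 1$ iff some generalized eigenvalue $\lambda$ of $\theta(g)$ has $\lambda\eta\in\RR^k_1(A)$, which is the stated equivalence (noting that generalized eigenvalues and eigenvalues coincide as a set, so one may say ``eigenvalue'').

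For the second assertion, let $\omega=\eta\otimes g\in\Pi(A,\theta)$, so $\det(\theta(g))=0$, meaning $0$ is an eigenvalue of $\theta(g)$. Then for the eigenvalue $\lambda=0$ the element $\lambda\eta=0$ lies in $\RR^k_1(A)$ for every $k$ with $H^k(A)\neq 0$, since $\RR^k_1(A)$ always contains the origin whenever $H^k(A)=H^k(A,d_0)\neq 0$. Applying the equivalence just proved with this choice of eigenvalue shows $\omega\in\RR^k_1(A,\theta)$ for all such $k$; intersecting over $k$ gives the displayed inclusion. Moreover, by hypothesis $\omega$ genuinely lies in $\F^1(A,\g)\subseteq\F(A,\g)$, so it is a legitimate point of the ambient parameter space and the membership statement makes sense.

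The main obstacle I anticipate is the step handling the nilpotent part of $\theta(g)$: over a single generalized eigenspace $V_{(\lambda)}$ the differential is $d_{\lambda\eta}+\ad_{\eta\otimes N}$ with $N$ nilpotent, and one must show this does not change whether the cohomology vanishes in a given degree compared to the semisimple model $d_{\lambda\eta}$ on $V_\lambda$. I would handle this by filtering $V_{(\lambda)}$ by $\ker N^j$, observing that the associated graded complex is a sum of shifted copies of $(A^\hdot,d_{\lambda\eta})$, and then checking that the spectral sequence of this filtration forces $H^k$ of the total complex to be nonzero exactly when $H^k(A,d_{\lambda\eta})\neq 0$ — nonvanishing on $E_1$ in degree $k$ survives because the extremal Jordan-string generators contribute classes that cannot be cancelled, and conversely vanishing on $E_1$ in degree $k$ forces vanishing of $E_\infty$ hence of $H^k$. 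This is essentially a standard Jordan-form argument, but stating it cleanly — and making sure the ``if and only if'' survives in both directions — is where the care is needed; everything else is bookkeeping with the explicit formula for $d_\omega$.
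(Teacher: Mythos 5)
Your overall strategy is the same as the paper's proof of Theorem \ref{thm:ess1res} and Corollary \ref{cor:vdet}: reduce to $\g=\gl(V)$, put $\theta(g)$ in Jordan form so that the Aomoto complex splits over the generalized eigenspaces, reduce to a single Jordan block with eigenvalue $\lambda$, and compare with the rank-one complex $(A,d_{\lambda\eta})$ via the filtration along the Jordan string. Your treatment of the second assertion (take $\lambda=0$ and use $0\in\RR^k_1(A)\Leftrightarrow H^k(A)\ne 0$) is exactly the paper's argument and is fine, as is the ``only if'' direction of the equivalence: if the degree-$k$ part of the first page of the spectral sequence vanishes, i.e.\ $H^k(A,d_{\lambda\eta})=0$ for every eigenvalue $\lambda$, then $H^k(A\otimes V,d_\omega)=0$.

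The gap is in the converse --- the ``if'' half --- which is precisely the step you flag as ``where the care is needed.'' Your proposed mechanism, that nonvanishing of $E^1$ in degree $k$ persists to $E^{\infty}$ ``because the extremal Jordan-string generators contribute classes that cannot be cancelled,'' is not a proof, and the naive version of it is false: the spectral sequence of the Jordan filtration does \emph{not} degenerate at $E^1$ in general. The paper makes exactly this point in Remark \ref{rem:spec seq}, and Example \ref{ex:heis ss} exhibits a \cdga\ (the minimal model of the Heisenberg nilmanifold) and an essentially rank-one connection for which $d^2\colon E^2_3\to E^2_1$ is nonzero; there the degree-one class supported at the top (extremal) level of the Jordan string carries a nontrivial higher differential, so it does not survive, and neither does the bottom-level class it hits. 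What the theorem actually asserts is the finer statement that the degree-$k$ part of $E^1$ cannot be killed \emph{in its entirety}, and the paper's proof of this is not a formal filtration argument: it uses the multiplicative structure of $A$ --- specifically $\eta^2=0$ and the graded Leibniz rule, which give $\eta\, d\beta'=-d(\eta\beta')=d_{\lambda\eta}(-\eta\beta')$ --- first to show that, under the hypothesis $H^k(A\otimes\C^n,d_\omega)=0$, cup product with $\eta$ annihilates $H^k(A,d_{\lambda\eta})$, and then to produce the explicit $d_\omega$-cocycle $\beta\otimes e_1+\alpha\otimes e_2$ whose exactness forces the connecting homomorphism $\delta_k$ to vanish, whence $H^k(A,d_{\lambda\eta})=0$ from the long exact sequence. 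Without an argument of this kind (or some other input beyond the filtration itself), the ``if'' direction of your equivalence \eqref{eq:equiv} remains unproved.
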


As a first application of this general result, we determine in \S\ref{subsec:nilp} the
germs at $1$ of embedded depth $1$ characteristic varieties, for a finitely-generated 
nilpotent group $\pi$, in the case when $G$ is either semisimple of rank one, or 
one of its Borel subgroups.  Given a rational representation $\iota\colon G \to \GL(V)$, 
with tangent map $\theta$, we show that $\F(A,\g)= \F^1(A,\g)$, where $A=\CC^\hdot(\n)$ 
is the Chevalley--Eilenberg cochain algebra of the Malcev--Lie algebra $\n$ associated 
to $\pi$. Moreover,
\begin{equation}
\label{eq:intro res nilp}
\RR_1^k (\CC^\hdot(\n), \theta)=\Pi(\CC^\hdot(\n), \theta)
\end{equation}
if $k \leq \dim \n$ and is empty otherwise.

\subsection{Holonomy Lie algebras}
\label{subsec:intro hlie}

In \S\ref{sect:holo lie}, we consider the Lie analogue of resonance varieties, 
expanding on the approach from \cite{DPS}.  Given a Lie algebra $\h$, 
and a representation $\theta\colon \g\to \gl(V)$ of another Lie algebra, 
the corresponding resonance varieties are defined as 
\begin{equation}
\label{eq:intro lie res}
\RR^i_r(\h, \theta)= \{\rho \in \Hom_{\Lie}(\h,\g)
\mid \dim_{\C} H^i(\h,  V_{\theta\circ\rho}) \ge  r\}.
\end{equation}

We relate this notion of resonance to the previous one, under the 
assumption that both $\g$ and $V$ have finite dimension. 
Given a connected $\cdga$ $A$ with $A^1$ finite-dimensional,
we define in a functorial way the {\em holonomy Lie algebra}\/ $\h(A)$,  
as a certain quotient of the free Lie algebra on the dual vector space 
$A_1=(A^1)^*$. We then prove the following result.

\begin{theorem}
\label{thm=lieresintro}
The natural linear isomorphism $\psi\colon A^1\otimes \g \cong  \Hom (A_1,\g)$ 
restricts to an isomorphism
$\psi\colon \F(A,\g) \cong  \Hom_{\Lie} (\h(A), \g)$, and induces  isomorphisms
$\RR^i_r(A,\theta) \cong \RR^i_r (\h(A), \theta)$, for $i\le 1$ and $r\ge 0$. 

Furthermore, if $A= \CC^\hdot(\f)$, where $\f$ is a Lie algebra of finite dimension, 
then $\h(A)= \f$, and $\psi$ induces  isomorphisms
$\RR^i_r(A,\theta) \cong \RR^i_r (\f, \theta)$, for all $i,r \ge 0$. 
\end{theorem}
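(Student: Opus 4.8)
The plan is to reduce both parts to an explicit unwinding of the Maurer--Cartan equation and of the defining relations of $\h(A)$. Recall that $\h(A)$ is the quotient of the free Lie algebra $\mathbb{L}(A_1)$ on $A_1=(A^1)^*$ by the Lie ideal generated by the image of the map $(A^2)^*\to\mathbb{L}_1(A_1)\oplus\mathbb{L}_2(A_1)=A_1\oplus\bigwedge^2 A_1$ obtained by transposing the differential $d^1\colon A^1\to A^2$ together with the multiplication $\mu\colon A^1\wedge A^1\to A^2$, using the canonical identification $\mathbb{L}_2(A_1)\cong\bigwedge^2 A_1$. The isomorphism $\psi\colon A^1\otimes\g\xrightarrow{\sim}(A_1)^*\otimes\g=\Hom(A_1,\g)$ sends a tensor $\omega=\sum_i\alpha_i\otimes x_i$, written in a basis $\{\alpha_i\}$ of $A^1$ with dual basis $\{e_i\}$ of $A_1$, to the linear map $e_i\mapsto x_i$; write $\phi_\omega\colon\mathbb{L}(A_1)\to\g$ for its unique extension to a Lie algebra map. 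First I would pair the Maurer--Cartan element $\partial\omega+\tfrac12[\omega,\omega]\in A^2\otimes\g$ with an arbitrary $c\in(A^2)^*$: the contribution of $\partial\omega$ equals $\phi_\omega$ applied to the transpose of $d^1$ at $c$, and that of $\tfrac12[\omega,\omega]$ equals $\phi_\omega$ applied to the transpose of $\mu$ at $c$. Hence $\omega$ is flat if and only if $\phi_\omega$ kills every defining relation of $\h(A)$, that is, if and only if $\phi_\omega$ factors through a Lie algebra homomorphism $\h(A)\to\g$; this gives the isomorphism $\psi\colon\F(A,\g)\xrightarrow{\sim}\Hom_{\Lie}(\h(A),\g)$.

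For the resonance varieties in degrees $i\le 1$, fix $\omega\in\F(A,\g)$, set $\rho=\psi(\omega)$, and compare the truncated complex $A^0\otimes V\to A^1\otimes V\to A^2\otimes V$ carrying $d_\omega=d\otimes\id_V+\ad_\omega$ against the degree-$(\le 2)$ truncation of the Chevalley--Eilenberg complex $C^\hdot(\h(A),V_{\theta\circ\rho})$. Since $A$ is connected, $A^0\otimes V=V=C^0$, and $d^0_\omega$ carries $v$ to the tensor whose image under $\psi$ is the inner $1$-cochain $e_i\mapsto\theta(\rho(e_i))v$; consequently $\psi$ identifies $\ker d^0_\omega$ with $V^{\h(A)}=Z^0(\h(A),V)$ and $\im d^0_\omega$ with the space $B^1(\h(A),V)$ of inner $1$-cochains. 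For $1$-cocycles, an element $u=\sum_i\alpha_i\otimes v_i\in A^1\otimes V$ lies in $\ker d^1_\omega$ if and only if $c(d_\omega u)=0$ for every $c\in(A^2)^*$; the same computation as in the first paragraph identifies $c(d_\omega u)$ with the value on the relation indexed by $c$ of the unique $1$-cochain on $\mathbb{L}(A_1)$ extending $e_i\mapsto v_i$, so $u\in Z^1(A\otimes V,d_\omega)$ exactly when $\psi(u)$ descends to a Chevalley--Eilenberg $1$-cocycle on $\h(A)$. All these identifications are induced by $\psi$ and $\id_V$ and do not depend on $\omega$, so for every $\omega$ we obtain $H^i(A\otimes V,d_\omega)\cong H^i(\h(A),V_{\theta\circ\rho})$ for $i=0,1$; combined with the first paragraph this shows that $\psi$ restricts to isomorphisms $\RR^i_r(A,\theta)\cong\RR^i_r(\h(A),\theta)$ for $i\le 1$ and all $r\ge 0$.

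Finally, let $A=\CC^\hdot(\f)=(\bigwedge^\hdot\f^*,d)$ with $\f$ of finite dimension, so $A^1=\f^*$, $A^2=\bigwedge^2\f^*$, the multiplication $\mu\colon\bigwedge^2 A^1\to A^2$ is the identity, and $d^1\colon\f^*\to\bigwedge^2\f^*$ is, up to sign, the transpose of the bracket of $\f$. Transposing, the defining relations of $\h(A)$ become, up to sign, the equalities $[x,y]_{\mathbb{L}(\f)}=[x,y]_\f$ for $x,y\in\f$, so the canonical surjection $\mathbb{L}(\f)\surj\f$ extending $\id_\f$ identifies $\h(A)$ with $\f$. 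Under this identification and $\psi$, the complex $(A\otimes V,d_\omega)$ is exactly the Chevalley--Eilenberg cochain complex $C^\hdot(\f,V_{\theta\circ\rho})$: indeed $A^\hdot=\bigwedge^\hdot\f^*=C^\hdot(\f,\C)$ as \cdga's, the summand $d\otimes\id_V$ is the Chevalley--Eilenberg differential with trivial coefficients, and $\ad_\omega$ contributes precisely the module-action terms prescribed by $\theta\circ\rho$. Since the parameter spaces are already matched by the first paragraph, it follows that $\psi$ induces isomorphisms $\RR^i_r(A,\theta)\cong\RR^i_r(\f,\theta)$ in every degree $i\ge 0$ and depth $r\ge 0$. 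The delicate point throughout is sign bookkeeping: one must fix the construction of $\h(A)$ so that the transpose of $d^1$ yields exactly the degree-one relations and the transpose of $\mu$ exactly the degree-two relations, and simultaneously choose conventions making the Maurer--Cartan element, the twisted differential $d_\omega$, and the Chevalley--Eilenberg differentials all compatible; granting that, the remaining verifications are routine.
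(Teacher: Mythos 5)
Your proof is correct and follows essentially the same route as the paper: unwinding the Maurer--Cartan equation against $(A^2)^*$ to match the defining relations of $\h(A)$ (the paper's Proposition \ref{prop:flathol}), comparing the low-degree Aomoto complex with the presentation complex computing $H^{\le 1}(\h(A),V)$ (Lemma \ref{lem:coho lie} and Corollary \ref{cor:lie res}), and identifying the Aomoto complex of $\CC^\hdot(\f)$ with the Chevalley--Eilenberg complex of $\f$ (Example \ref{ex:holc}, Lemma \ref{lem:cohlie}, Corollary \ref{cor:liecoh}). The only cosmetic differences are that you work with pairings rather than bases and treat a general $\theta$ directly instead of first reducing to the universal case $\theta=\id_{\gl(V)}$.
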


We also show that the non-triviality of the depth $1$, degree $1$ 
resonance varieties is detected in the rank $1$ case by the Lie 
algebra $\sol_2$, and by the $3$-dimensional Heisenberg Lie 
algebra $\heis$.  More precisely, $\RR^1_1(A) \nsubseteq \{0\}$ 
if and only if $\Hom_{\Lie}(\h(A), \sol_2)$ contains a surjection, 
while $\RR^1_1(H^\hdot (A)) \nsubseteq \{0\}$ if and only if
$\Hom_{\Lie}(\h(A), \heis)$ contains a surjection.

\subsection{Linear resonance}
\label{subsec:intro linear}

Starting in \S\ref{sect:linres}, and for the rest of the paper, we focus our attention 
on the class of \cdga's $A$ for which the rank $1$ resonance variety $\RR^1_1(A)$ 
decomposes as a finite union of linear subspaces. From this point on, we 
assume the bare minimum in terms of finiteness conditions that are needed 
for our approach to work well:  $A$ will be a connected, $1$-finite $\cdga$, 
and $\theta$ will be a finite-dimensional representation of a Lie algebra $\g$ 
of finite dimension. 

It turns out that, under some additional hypotheses on $A$ and $\g$, 
the structure of both the parameter space $\F(A,\g)$ and of the higher-rank 
resonance varieties $\RR_1^1(A, \theta)$ is determined by the 
decomposition of the rank $1$ resonance variety into {\em linear}\/ 
irreducible components.  More precisely, we prove the following 
structural result. 

\begin{theorem}
\label{thm:intro3}
Suppose $\RR^1_1(A)=\bigcup_{C\in \cC} C$, a finite union 
of linear subspaces. For each $C\in \cC$, let $A_C$ 
denote the sub-$\cdga$ of the truncation $A^{\le 2}$ 
defined by $A^1_C=C$ and $A^2_C=A^2$. 
Then, for any Lie algebra $\g$, 
\begin{equation}
\label{eq:i1}
\F(A, \g) \supseteq \F^1(A, \g) \cup \bigcup_{0\ne C\in\cC}\F(A_C, \g),
\end{equation}
where each $\F(A_C, \g)$ is Zariski-closed in $\F(A, \g)$.   Moreover, 
if $A$ has zero differential, $A^1$ is non-zero, and 
$\g =\sl_2$ or $\sol_2$, then \eqref{eq:i1} holds as 
an equality, and, for any $\theta$, 
\begin{equation}
\label{eq:i2}
\RR_1^1(A, \theta)=\Pi(A, \theta) \cup \bigcup_{0\ne C\in \cC} \F(A_C, \g).
\end{equation}
\end{theorem}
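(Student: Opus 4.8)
The plan is to treat the three assertions of Theorem~\ref{thm:intro3} in increasing order of difficulty, exploiting throughout the functoriality of the bifunctor $\F(-,-)$ and the rank~$1$ decomposition $\RR^1_1(A)=\bigcup_{C\in\cC}C$. First I would establish the inclusion \eqref{eq:i1}. The key observation is that the Maurer--Cartan equation $\partial\omega+\tfrac12[\omega,\omega]=0$ for $\omega\in A^1\otimes\g$ only involves $A^{\le 2}$ and the differential $d\colon A^1\to A^2$, so $\F(A,\g)=\F(A^{\le 2},\g)$; this reduces everything to the truncation. For $0\ne C\in\cC$, the inclusion of cdga's $A_C\hookrightarrow A^{\le 2}$ (valid because $dC\subseteq A^2=A^2_C$, which I must check using $C\subseteq\RR^1_1(A)$ together with the definition of resonance) induces a closed embedding $\F(A_C,\g)\hookrightarrow\F(A,\g)$ with image the set of flat connections supported on $C\otimes\g$; similarly $\F^1(A,\g)$ lands inside. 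This gives the ``$\supseteq$'' of \eqref{eq:i1} and the Zariski-closedness claim for free, since each $\F(A_C,\g)$ is cut out inside $\F(A,\g)$ by the linear equations defining $C\otimes\g\subseteq A^1\otimes\g$.

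Next, for the equality in \eqref{eq:i1} under the hypotheses $d=0$, $A^1\ne 0$, $\g\in\{\sl_2,\sol_2\}$: here $\F(A,\g)=\{\omega\in A^1\otimes\g:[\omega,\omega]=0\}$. Writing $\omega=\sum_j \eta_j\otimes x_j$ with $\{x_j\}$ a basis of $\g$, the condition $[\omega,\omega]=0$ becomes a system of equations in $\Sym^2 A^1\otimes\g$. The plan is to diagonalize or put into a normal form: a standard argument (cf.\ the rank~$1$ theory and the use of $\sol_2$ in \S\ref{subsec:intro hlie}) shows that for $\g=\sol_2$ or $\sl_2$, any solution of $[\omega,\omega]=0$ is, after conjugation by $G$, either of the form $\eta\otimes g$ (hence in $\F^1$) or has all its ``$A^1$-components'' proportional to elements of a single isotropic line — and that common line $C:=\spn\{\eta_j\}$ must satisfy $C\subseteq\RR^1_1(A)$ because every nonzero vector of $C$ fails to give a $1$-dimensional resonance-free situation; since $A$ is a $\cdga$ with zero differential, $\RR^1_1(A)$ is a homogeneous variety and $C$ being linear and irreducible forces $C\in\cC$. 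I would carry this out by exploiting the explicit structure constants of $\sl_2$: $[h,e]=2e$, $[h,f]=-2f$, $[e,f]=h$, so $[\omega,\omega]=0$ unwinds to three bilinear identities among the three $A^1$-components $\eta_h,\eta_e,\eta_f$, and a short case analysis on which components vanish yields the dichotomy. The Borel case $\sol_2=\langle h,e\rangle$ is easier, having only the relation $[h,e]=2e$.

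Finally, for \eqref{eq:i2}, by Theorem~\ref{thm:intro2} we already know that on $\F^1(A,\g)$ the depth~$1$ higher-rank resonance is governed by rank~$1$ resonance and that $\Pi(A,\theta)\subseteq\RR^1_1(A,\theta)$. Combined with the equality case of \eqref{eq:i1}, it remains to identify $\RR^1_1(A,\theta)\cap\F^1(A,\g)=\Pi(A,\theta)$ and to show $\F(A_C,\g)\subseteq\RR^1_1(A,\theta)$ for each $0\ne C\in\cC$. For the first: $\omega=\eta\otimes g\in\F^1$ lies in $\RR^1_1(A,\theta)$ iff some eigenvalue $\lambda$ of $\theta(g)$ has $\lambda\eta\in\RR^1_1(A)$; when $\lambda\ne 0$ this says $\eta\in\RR^1_1(A)$ (homogeneity), but I must argue that in the $d=0$ case with $C$-structure this is already subsumed — the cleanest route is to observe $\eta\ne 0$ with $\eta\in\RR^1_1(A)$ means $\eta\in C$ for some $C\in\cC$, so $\eta\otimes g\in\F(A_C,\g)$, placing such points in the second term of the union rather than forcing them into $\Pi$; thus the $\Pi(A,\theta)$ piece collects exactly the points $\eta\otimes g$ with $\det\theta(g)=0$, i.e.\ $0$ an eigenvalue. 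For the second inclusion: on $\F(A_C,\g)$ with $C\subseteq\RR^1_1(A)$, one computes $H^1(A\otimes V,d_\omega)$ using that $A^0=\C\cdot 1$ and $A^1_C=C$, and shows the rank of $d_\omega$ on this two-step complex drops by at least one — this follows because $C\subseteq\RR^1_1(A)$ provides, for rank~$1$ coefficients, a nonzero class, which tensored with a weight vector of $\theta$ gives a nonzero class for $d_\omega$; here I would use Theorem~\ref{thm=lieresintro} to pass to the holonomy Lie algebra $\h(A_C)$ if a cleaner homological computation is wanted.

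The main obstacle I anticipate is the equality case of \eqref{eq:i1}: proving that every solution of $[\omega,\omega]=0$ in $A^1\otimes\sl_2$ is conjugate into $\F^1$ or into some $\F(A_C,\g)$, and in particular that the supporting line $C$ is forced to be a resonance component. This is where the special geometry of $\sl_2$ (its nilpotent cone, the fact that a pencil of commuting elements is either abelian or, up to conjugacy, lies in a Borel) must be married to the combinatorics of $\RR^1_1(A)=\bigcup_C C$; the argument must rule out ``diagonal'' solutions spread across several components, which is exactly the kind of degeneracy that can fail for larger Lie algebras and explains the hypothesis $\g\in\{\sl_2,\sol_2\}$.
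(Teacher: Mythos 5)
Your proposal is correct and follows essentially the same route as the paper: for $d=0$ and $\g=\sl_2$ or $\sol_2$, flatness of $\omega$ is equivalent to the span $U$ of its $A^1$-components being isotropic for $\cup$, so either $\dim U\le 1$ (whence $\omega\in\F^1(A,\g)$) or every vector of $U$ is resonant and the irreducible linear subspace $U$ lands inside a single component $C\in\cC$; the description of $\RR^1_1(A,\theta)$ then follows from Theorem \ref{thm:intro2} exactly as you outline. Two small corrections: no conjugation by $G$ is needed (the span $U$ is a conjugation invariant, and the three bilinear identities $\eta_h\eta_e=\eta_h\eta_f=\eta_e\eta_f=0$ give isotropy directly, so the worry about ``diagonal'' solutions evaporates), and $U=\spn\{\eta_j\}$ need only be \emph{contained in} some $C\in\cC$, not itself an element of $\cC$ as you assert.
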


For instance, consider the case when $X$ is a $1$-finite, $1$-formal space, 
and $A=H^{\hdot}(X,\C)$ with differential $d=0$. Then, as shown in \cite{DPS}, 
the resonance variety $\RR^1_1(A)$ is linear. By a result from \cite{DP}, Gysin 
models of quasi-projective manifolds also have the linearity property for 
$\RR^1_1(A)$, but the differential in this case may well be non-zero.

\subsection{Artin groups}
\label{subsec:intro artin}

In \S\ref{sect:raag}, we analyze the Kapovich--Millson family of representation 
varieties, near the origin.  Given a finite simplicial graph $\G$, with edges 
labeled by integers greater than $1$, there is an associated Artin group $\pi_{\G}$;  
if all labels are equal to $2$, the group $\pi_\G$ is called a right-angled 
Artin group.  According to \cite{KM}, every Artin group $\pi_{\G}$ has a 
classifying space $X_{\G}$ which is $1$-finite and $1$-formal. 
For the purpose of computing the embedded degree $1$ resonance 
varieties, we may model this space by the $\cdga$ 
$A_{\G}=(H^{\hdot}(\pi_{\G}, \C), d=0)$, to which Theorem \ref{thm:intro3} applies. 

We use holonomy Lie algebras, as well as results from \cite{PS-artin}
and \cite{DPS}  to describe explicitly the global irreducible 
decompositions for $\F(A_\G, \g)$ and $\RR_1^1(A_\G, \theta)$, in terms 
of the ``odd contraction" of the labeled graph $\G$ and a representation 
$\theta$ of $\g=\sl_2$.  For instance, if $\pi_\G$ is a right-angled Artin group, then 
\begin{equation}
\label{eq:fraag}
\F(A_\G, \sl_2)= \bigcup_{\W\subseteq \V} S_{\W}
\end{equation}
where $\W$ runs through the subsets of the vertex set of $\G$, 
maximal with respect to an order $\preccurlyeq$ defined in terms 
of the connected components of the induced subgraph $\G_{\W}$, 
while $S_{\W}$ is a certain combinatorially defined, closed subvariety of 
$\C^{\W}\otimes \sl_2$, in fact, a product of cones 
over varieties of the form $\PP^n \times \PP^2$, see \eqref{eq:prodcone}.

Taking germs at $0$, these results give a concrete way to compute the germs 
at the origin for both $\Hom(\pi_{\G}, \PSL_2)$ and  $\VV^1_1(X_{\G},\iota)$,  
where $\iota$ is an arbitrary rational representation of $\PSL_2$. Our explicit 
description of the singularity germ $\Hom(\pi_{\G}, \PSL_2)_{(1)}$  complements 
the qualitative quadraticity restriction on this germ given by Kapovich 
and Millson in \cite{KM}.

For an arbitrary Lie algebra $\g$, we show that the variety  
$\F(A_{\G}, \g)$ contains the union of the subvarieties $S_{\W}$, 
defined in a similar manner as above. If $\g$ is semisimple and different 
from $\sl_2$, though, we show by example that this containment 
can be strict.

\subsection{Quasi-projective manifolds}
\label{subsec:intro qp}

A basic object of study in algebraic geometry is the class of quasi-projective 
varieties.  In \S\ref{sect:qp}, we apply our machinery to quasi-projective manifolds. 

Let $X$ be such a manifold, and assume $b_1(X)>0$. In this case, the 
irreducible decomposition of $\RR_1^1(A)$, for a suitable Gysin model 
$A$ of $X$, can be described in geometric terms. The irreducible 
components are all linear, and they are indexed 
by a finite list, denoted $\mathcal{E}_X$, of equivalence classes of 
regular ``admissible" maps $f\colon X\to S$, 
where the quasi-projective manifolds $S$ are $1$-dimensional, and 
have negative Euler characteristic. More precisely, it follows from \cite{DP} 
that 
\begin{equation}
\label{eq:res11}
\RR_1^1(A)= \{ 0\} \cup \bigcup_{f\in \mathcal{E}_X} f^* (H^1(A(S))),
\end{equation}
where $A(S)$ is Morgan's Gysin model of $S$. If $X$ is also 
$1$-formal, it follows from \cite{DPS} that the same decomposition formula 
holds with Gysin models replaced by cohomology rings, 
endowed with the zero differential.

In the $1$-formal situation, we use again Theorem \ref{thm:intro3} to find 
explicit (global) irreducible decompositions for $\F(H^{\hdot}(X,\C), \g)$ 
and $\RR_1^1(H^{\hdot}(X,\C), \theta)$ in the case when $\g=\sl_2$, 
solely in terms of the set $\mathcal{E}_X$ and of the representation 
$\theta$. In the (much more delicate) general case, we are able to 
show that $\F(A, \g)= \F^1(A, \g)$ and $\RR_1^1(A, \theta) =\Pi (A, \theta)$, 
for an arbitrary Gysin model $A$ of $X$, and for a representation $\theta$ of
$\g=\sl_2$ or $\sol_2$, provided that $\RR_1^1(X)= \{ 0\}$. 

In \cite{PS-beta}, we apply this machinery to the case when $X$ is the 
complement of a complex hyperplane arrangement. Exploiting the known 
decomposition into irreducible components of $\RR^1_1(X)$, and the 
fact that $X$ is formal, we determine the irreducible decomposition 
of $\F(H^{\hdot}(X,\C),\sl_2)$, and then relate this decomposition 
to the algebraic monodromy of the Milnor fibration of the arrangement.

\subsection{Notations and conventions}
\label{subsec:notation}

Throughout, we work over $\C$. Affine varieties and analytic germs will 
always be reduced.  For a point $v$ on an affine variety  $\mathcal{V}$, 
we let $\mathcal{V}_{(v)}$ denote the associated germ.

We denote vector space duals by $V^*$. For a graded vector
space $A^{\hdot}$, we write $A_{\hdot}=(A^{\hdot})^*$.
Given two vector spaces, $V$ and $W$, we denote by
$\Hom^1(V,W)$ the subset of those linear maps from $V$ to $W$
whose image is at most $1$-dimensional. We also
let $\psi\colon V\otimes W\to \Hom (V^*,W)$ be the linear
map given by $\psi(v\otimes w)(u^*) =\langle v, u^* \rangle w$,
where $\langle \cdot, \cdot \rangle$ stands for the duality pairing. 
(This map is always injective, and it is an isomorphism when
$\dim V<\infty$.)

\section{Flat connections and resonance varieties}
\label{sect:prel}

We start by reviewing the two basic algebraic constructions in 
the title of this section, paying special attention to their functoriality 
properties, and to the rank $1$ case.

\subsection{Representation varieties and jump loci}
\label{subsec:charvars}

Let $X$ be a path-connected space, and let $\pi=\pi_1(X)$ be its fundamental group.
We say that $X$ is of finite $q$-type (or {\em $q$-finite}), for some integer
$q\ge 1$, if $X$ has the homotopy type of a CW-complex with finite $q$-skeleton.

Now let $G$ be a linear algebraic group. The set of group homomorphisms from 
$\pi$ to $G$, denoted $\Hom(\pi, G)$, is called the {\em $G$-representation variety}\/ 
of $\pi$.  This set comes with an obvious basepoint: the trivial representation, 
$1\in \Hom(\pi, G)$. If the space $X$ is $1$-finite, or, equivalently, the group 
$\pi$ is finitely generated, then $\Hom(\pi, G)$ has a natural affine structure.

Next, fix a rational representation $\iota\colon G \to \GL (V)$.
By definition, the {\em characteristic varieties}\/ (or, {\em cohomology
jump loci}) of $X$, in degree $i\ge 0$ and depth $r\ge 0$, and with
respect to the representation $\iota$, are the sets
\begin{equation}
\label{eq:defv}
\VV^i_r(X, \iota)=\{\rho \in \Hom(\pi, G) \mid
\dim_{\C} H^i(X, {}_{\iota\rho}V)\ge r\},
\end{equation}
where ${}_{\iota\rho}V$ denotes the left $\pi$-module defined
by the representation $\iota\circ \rho\colon \pi\to \GL(V)$.  If
the space $X$ is $q$-finite, the jump loci $\VV^i_r(X,\iota)$
are Zariski-closed subsets of the representation variety
$\Hom(\pi, G)$, for all $i\le q$ and $r\ge 0$.

The most basic (and best understood) case is the rank $1$ case,
which corresponds to the choices $G=\C^{\times}$, $V=\C$, and
$\iota \colon \C^{\times} \xrightarrow{=} \GL_1(\C)$. The
respective jump loci, denoted simply by $\VV^i_r(X)$,
are subsets of the character group $\TT(\pi):=\Hom(\pi,\C^{\times})$.

\begin{remark}
\label{rem:dual}
Let $\chi\colon \pi\to \GL(V)$ be a finite-dimensional
representation of the group $\pi=\pi_1(X)$, defining
a left $\pi$-module ${}_{\chi}V$, and let $V^*_{\chi}$
be the dual (right) $\pi$-module.  We then have a
natural duality isomorphism, $H^{\hdot}(X, {}_{\chi}V)
\isom H_{\hdot}(X, V^*_{\chi})^*$, see for instance Lemma 8.5
from preprint v.1 of \cite{DP}.  Hence,  when
defining the rank $1$ characteristic varieties
$\VV^i_r(X)$, cohomology may be replaced
by homology, as done, for instance, in
\cite{PS-plms, PS-specres}.
\end{remark}

\subsection{Flat connections}
\label{subsec:flat}
We switch now to a more algebraic (yet very much related)
framework.  Let $A=(A^{\hdot},d)$ be a commutative, differential
graded algebra (for short, a \cdga) over $\C$.  We say that $A$ is of
finite $q$-type  (or {\em $q$-finite}), for some $1\le q\le \infty$, if $A$ is connected 
(that is, $A^0$ is the $\C$-span of the unit $1$), and $A^{\le q}$ is finite-dimensional.

Next, let $\g$ be a Lie algebra over $\C$.
Given any $\cdga$ $A$, consider the tensor product $A\otimes \g$.
On this vector space, we may define a Lie bracket given by
$[\alpha\otimes x, \beta\otimes y]= \alpha \beta \otimes [x,y]$, 
and a differential given by $\partial (\alpha\otimes x) = d \alpha \otimes x$.
This construction produces a graded, differential Lie algebra
(for short, \dgla), denoted $A\otimes \g = (A^{\hdot}\otimes \g, \partial)$.
Moreover, the construction is functorial in both $A$ and $\g$.

The algebraic analogue of the $G$-representation variety $\Hom(\pi,G)$
is the set of {\em $\g$-valued flat connections}\/ on $A$.  By definition,
this is the set $\F(A,\g)$ of degree $1$ elements in $A\otimes \g$ that
satisfy the Maurer--Cartan equation,
\begin{equation}
\label{eq:flat}
\partial\omega + \tfrac{1}{2} [\omega,\omega] = 0 .
\end{equation}

A typical element in $A^1 \otimes \g$
is of the form $\omega =\sum_i \eta_i \otimes g_i$, with
$\eta_i\in A^1$ and $g_i\in \g$; the flatness condition
amounts to
\begin{equation}
\label{eq:flat coords}
\sum_{i} d\eta_i \otimes g_i +
\sum_{i<j} \eta_i \eta_j \otimes [g_i, g_j] =0.
\end{equation}

The set of flat connections depends functorially on both
$A$ and $\g$, and comes with an obvious basepoint:
the trivial connection, $0\in \F(A,\g)$.  Now suppose
that both $A^1$ and $\g$ are finite-dimensional.
It is then readily seen that $\F(A,\g)$ is a Zariski-closed
subset of the affine space $A^1\otimes \g$, and
thus carries a natural affine structure.

The simplest situation is the rank one case, for which $\g=\C$.
In this case, the space $\F(A,\C)$ may be identified with the
vector space $Z^1(A)=\{\alpha \in A^1\mid d\alpha=0\}$.
In particular, if $d=0$, then $\F(A,\C)=A^1$.

\begin{remark}
\label{rem:gm}
The terminology we use here comes from differential geometry.  
Indeed, if $\Omega^{\hdot}(X)$ is the algebra of differential forms 
on a connected, smooth manifold $X$, and $ \g$ is the Lie algebra of a Lie 
group $G$, then $\F(\Omega^{\hdot}(X), \g)$ may be identified 
with the space of flat connections on the principal (trivial) bundle 
$X \times G \to X$. When $G$ is a linear algebraic group and 
$\pi_1(X)$ is finitely generated, the functorial monodromy, 
$\F(\Omega^{\hdot}(X),\g) \to \Hom(\pi_1(X), G)$, determines 
the germ at $1$ of the representation variety.
For a detailed treatment of this topic, we refer to \cite{GM}.
\end{remark}

\subsection{Aomoto complex}
\label{subsec:aomoto}
Returning now to the general situation, let $V$ be a complex
vector space, and let $\theta \colon \g \to \gl (V)$ be a
morphism of Lie algebras, also known as a representation 
of the Lie algebra $\g$ in $V$, or a $\g$-module structure on $V$.  
For each flat connection $\omega\in \F(A,\g)$, we make the 
tensor product $A\otimes V$ into a cochain complex,
\begin{equation}
\label{eq:aomoto}
\xymatrixcolsep{22pt}
\xymatrix{(A\otimes V , d_{\omega})\colon  \
A^0 \otimes V \ar^(.65){d_{\omega}}[r] & A^1\otimes V
\ar^(.5){d_{\omega}}[r]
& A^2\otimes V   \ar^(.55){d_{\omega}}[r]& \cdots },
\end{equation}
using as differential the covariant derivative
$d_{\omega}=d\otimes \id_V + \ad_{\omega}$. Here, the adjoint
operator $ \ad_{\omega}$ is defined by the semi-direct product Lie 
algebra $V\rtimes_{\theta} \g$. Explicitly, if 
$\omega=\sum_i \eta_i \otimes g_i$, then
\begin{equation}
\label{eq:adw}
d_{\omega}(\alpha\otimes v) = d\alpha \otimes v +
\sum_{i} \eta_i  \alpha \otimes \theta(g_i)v,
\end{equation}
for all $\alpha\in A^i$ and $v\in V$.  It is readily checked that
the flatness condition on $\omega$ insures that $d_{\omega}^2=0$.

The cochain complex \eqref{eq:aomoto}, which we will call
the {\em Aomoto complex}, enjoys the following naturality property.
Given a morphism $\varphi\colon A\to A'$ of \cdga's, set
$\omega'=\varphi\otimes \id_{\g} (\omega)$; then
$\varphi\otimes \id_{V}\colon (A\otimes V , d_{\omega}) \to
(A'\otimes V , d_{\omega'})$ is a cochain map.

Note that $A^\hdot \otimes V$ is a free graded left $A^\hdot$-module.
Moreover, it is easily checked that the Aomoto complex is a $\DG$-module over the
\cdga~$(A^\hdot, d)$. In particular, $H^\hdot (A^\hdot \otimes V, d_{\omega})$
acquires a natural graded $H^\hdot (A)$-module structure.

The next lemma provides  a simple interpretation of the $0$-th
cohomology of the Aomoto complex.

\begin{lemma}
\label{lem:aom0}
Let $A$ a be a connected \cdga, and let $\theta\colon \g\to \gl(V)$
be a  representation.  Pick a  basis $\{\eta_i\}$
for $A^1$, and suppose
$\omega =\sum_i \eta_i \otimes g_i \in \F(A,\g)$ is a
flat connection. Then
\[
H^0(A\otimes V, d_{\omega})=\bigcap_i \ker (\theta(g_i)).
\]
\end{lemma}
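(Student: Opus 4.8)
\textbf{Proof plan for Lemma \ref{lem:aom0}.}
The statement is about the kernel of the covariant derivative $d_\omega$ restricted to the degree-zero part $A^0\otimes V$, so the plan is simply to unwind the definitions and use that $A$ is connected. Since $A^0 = \C\cdot 1$, every element of $A^0\otimes V$ is of the form $1\otimes v$ for a unique $v\in V$, and I would begin by identifying $A^0\otimes V$ with $V$ via this isomorphism. The first step is then to compute $d_\omega(1\otimes v)$ using the explicit formula \eqref{eq:adw}: since $d1 = 0$ in a connected \cdga, we get $d_\omega(1\otimes v) = \sum_i \eta_i\cdot 1\otimes\theta(g_i)v = \sum_i \eta_i\otimes\theta(g_i)v$, an element of $A^1\otimes V$.

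The second step is to observe that $1\otimes v$ lies in $H^0(A\otimes V, d_\omega) = \ker\big(d_\omega\colon A^0\otimes V\to A^1\otimes V\big)$ precisely when $\sum_i \eta_i\otimes\theta(g_i)v = 0$ in $A^1\otimes V$. Here I would invoke that $\{\eta_i\}$ is a \emph{basis} for $A^1$: the elements $\eta_i\otimes w$, as $w$ ranges over $V$, give a direct sum decomposition $A^1\otimes V = \bigoplus_i \eta_i\otimes V$, so the vanishing of $\sum_i \eta_i\otimes\theta(g_i)v$ is equivalent to $\theta(g_i)v = 0$ for every $i$, i.e. to $v\in\bigcap_i\ker(\theta(g_i))$. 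Combining the two steps yields the identification $H^0(A\otimes V, d_\omega)\cong\bigcap_i\ker(\theta(g_i))$, as claimed.

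There is no real obstacle here — the only point that genuinely requires the hypotheses is the implication that $\sum_i\eta_i\otimes\theta(g_i)v=0$ forces each $\theta(g_i)v=0$, which is exactly where the linear independence of the $\eta_i$ (equivalently, that they form a basis) is used; without that, one would only get a relation among the vectors $\theta(g_i)v$. Connectedness of $A$ is used once, to kill the term $d1\otimes v$ and to guarantee that $A^0\otimes V$ is canonically $V$ rather than something larger. I would also note that this computation does not use the Maurer--Cartan equation at all; flatness is only needed to make $d_\omega$ a differential in positive degrees, and the degree-zero kernel is insensitive to that.
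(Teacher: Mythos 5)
Your proposal is correct and follows essentially the same route as the paper's proof: identify $H^0$ with the kernel of $d_\omega$ on $A^0\otimes V\cong V$, compute $d_\omega(1\otimes v)=\sum_i\eta_i\otimes\theta(g_i)v$, and use linear independence of the $\eta_i$ to conclude. Your added remarks about where connectedness and the basis hypothesis enter, and that flatness is not needed for this degree-zero computation, are accurate but not a departure from the paper's argument.
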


\begin{proof}
From the definitions, we have that $H^0(A\otimes V, d_{\omega}) =
\ker (d_{\omega} \colon A^0 \otimes V \to A^1\otimes V)$, where
$d_{\omega}(1\otimes v) = \sum_i \eta_i \otimes \theta(g_i) v $.
The conclusion follows at once.
\end{proof}

\subsection{Resonance varieties}
\label{subsec:resvar}
More generally, we may compute the cohomology of the Aomoto
complex in any fixed degree, and record the set of flat connections for
which the dimension of this vector space jumps  above a prescribed
value.  This leads to defining the {\em resonance varieties}\/ of a $\cdga$ $A$,
with coefficients given by a representation $\theta\colon \g\to \gl(V)$ as
\begin{equation}
\label{eq:rra}
\RR^i_r(A, \theta)= \{\omega \in \F(A,\g)
\mid \dim_{\C} H^i(A \otimes V, d_{\omega}) \ge  r\}.
\end{equation}

If $A$ is $q$-finite, and both $\g$ and $V$ are finite-dimensional,
the sets $\RR^i_r(A, \theta)$ are Zariski-closed subsets of
$\F(A,\g)$, for all $i\le q$ and $r\ge 0$.

In the rank one case, we will simply write $\RR^i_r(A):=\RR^i_r(A,\id_{\C})$
for the corresponding resonance varieties, viewed as algebraic
subsets of $H^1(A) =Z^1(A)$.  If $d=0$, the varieties $\RR^i_r(A)$
are homogeneous subsets of $A^1$.  In general, though,
the varieties $\RR^i_r(A)$ are {\em not}\/ homogeneous.

The resonance varieties of a path-connected space $X$ are traditionally
defined as the rank-$1$ resonance varieties of its cohomology algebra
(endowed with the zero differential); that is, $\RR^i_r(X):=\RR^i_r(H^{\hdot}(X,\C))$.
More generally, we set $\RR^i_r(X, \theta):=\RR^i_r(H^{\hdot}(X,\C),\theta)$, 
for a representation $\theta\colon \g\to \gl(V)$. 

To avoid trivialities, we will assume from now on that $V\ne 0$,
in which case
\begin{equation}
\label{eq:basept}
0\in \RR^i_1(A,\theta) \Leftrightarrow
0\in \RR^i_1(A)\Leftrightarrow H^i(A)\ne 0.
\end{equation}

Here is a simple situation where the resonance variety $\RR^1_1(A, \theta)$
can be identified explicitly.

\begin{prop}
\label{prop:chi}
Let $A^\hdot$ be a $2$-finite $\cdga$, and let $\theta\colon \g\to \gl(V)$
be a finite-dimensional representation.   If $A^{>2}=0$ and $\chi(H^\hdot (A))<0$,
then $\RR^1_1(A, \theta)=\F(A,\g)$.
\end{prop}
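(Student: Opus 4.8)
The plan is to compute the Euler characteristic of the Aomoto complex $(A\otimes V, d_\omega)$ and observe that it is independent of $\omega$, hence equals the value at the trivial connection $\omega=0$, where the complex splits as $\dim V$ copies of $(A^\hdot,d)$. Concretely, for any flat connection $\omega\in\F(A,\g)$, the graded vector space underlying the Aomoto complex is $A^0\otimes V\to A^1\otimes V\to A^2\otimes V$ (using $A^{>2}=0$), and since the dimensions of the terms do not depend on $\omega$, neither does the alternating sum
\[
\chi(A\otimes V, d_\omega)=\sum_{i=0}^{2}(-1)^i\dim_\C(A^i\otimes V)=\chi(H^\hdot(A))\cdot\dim_\C V .
\]
By hypothesis $\chi(H^\hdot(A))<0$ and $V\ne 0$, so this quantity is strictly negative.

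Next I would translate the Euler-characteristic bound into a statement about $H^1$. Since $\chi(A\otimes V, d_\omega)=\dim H^0-\dim H^1+\dim H^2<0$, we get $\dim_\C H^1(A\otimes V, d_\omega)>\dim H^0+\dim H^2\ge 1$; in particular $\dim_\C H^1(A\otimes V,d_\omega)\ge 1$ for \emph{every} $\omega\in\F(A,\g)$. By the definition \eqref{eq:rra} of the resonance varieties, this says precisely that $\F(A,\g)\subseteq\RR^1_1(A,\theta)$, and the reverse inclusion is automatic, giving the asserted equality $\RR^1_1(A,\theta)=\F(A,\g)$.

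I would streamline the bookkeeping by noting that for a \emph{connected} $2$-finite $\cdga$ one has $H^0(A)=\C$, so $\chi(H^\hdot(A))=1-\dim_\C H^1(A)+\dim_\C H^2(A)$, and the condition $\chi(H^\hdot(A))<0$ is simply $\dim H^2(A)<\dim H^1(A)-1$; but the argument above does not need this refinement. The only genuinely delicate point — and the step I expect to be the main obstacle if one wants full rigor — is the claim that the dimensions of the terms $A^i\otimes V$, and hence $\chi$, are truly constant over all of $\F(A,\g)$: this is immediate here because the underlying graded module $A^\hdot\otimes V$ does not vary with $\omega$ (only the differential $d_\omega$ does), so no semicontinuity argument is actually required, and the computation reduces to the single point $\omega=0$ where $d_0=d\otimes\id_V$ and the complex is the direct sum of $\dim_\C V$ copies of $A^\hdot$. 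The finiteness hypotheses ($A$ being $2$-finite, $\g$ and $V$ finite-dimensional) are exactly what is needed for all these dimensions to be finite and for $\F(A,\g)$ and $\RR^1_1(A,\theta)$ to be honest affine varieties, so the proof goes through cleanly.
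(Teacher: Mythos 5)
Your argument is correct and is essentially the paper's own proof: both rest on the observation that $\chi(H^\hdot(A\otimes V,d_\omega))=\chi(H^\hdot(A))\cdot\dim V<0$ is independent of $\omega$, which forces $H^1(A\otimes V,d_\omega)\neq 0$ for every flat connection. (One cosmetic slip: the inequality $\dim H^0+\dim H^2\ge 1$ need not hold, but you only need $\dim H^1>\dim H^0+\dim H^2\ge 0$, so the conclusion stands.)
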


\begin{proof}
Let $\omega \in \F(A, \g)$ be an arbitrary element, and suppose
$H^1(A \otimes V, d_{\omega})=0$. Then clearly
$\chi(H^\hdot(A \otimes V, d_{\omega})) \geq 0$. On the other hand,
\[
\chi(H^\hdot(A \otimes V, d_{\omega}))=\chi(A^\hdot) \cdot \dim V=
\chi (H^\hdot (A)) \cdot \dim V<0,
\]
a contradiction.  Thus, $\omega \in \RR^1_1(A, \theta)$, and we are done.
\end{proof}

\subsection{Functoriality of resonance}
\label{subsec:funct}

Let us now discuss the functoriality properties of the resonance
varieties $\RR^i_r(A, \theta)$ attached to a $\cdga$ $A$
and a representation $\theta\colon \g\to \gl(V)$.

First, let $f\colon \g\to \g'$ be a morphism of Lie algebras. Clearly, the
map $\bar{f}=\id_{A^1}\otimes f\colon A^1\otimes \g\to A^1\otimes \g'$
restricts to a map $\bar{f}\colon \F(A,\g) \to \F(A,\g')$.
Hence, if $\theta'\colon \g' \to \gl(V)$ is a representation such that
$\theta=\theta'\circ f$, $\omega\in A^1 \otimes\g$, and
$\omega'=\bar{f}(\omega)$, then $d_{\omega} = d_{\omega'}$ and
\begin{equation}
\label{eq:univ}
\RR^i_r(A, \theta) =\bar{f}^{-1} (\RR^i_r(A, \theta')),
\end{equation}
for all $i\ge 0$ and $r\ge 0$.

\begin{remark}
\label{rem:univ}
By taking $f=\theta$ in formula \eqref{eq:univ}, we see that,
from the point of view of the resonance varieties of $A$, the case when
$\theta'$ is the identity of $\gl(V)$ determines all others; that is to say,
$\RR^i_r(A, \theta) =\bar\theta^{-1}(\RR^i_r(A, \id_{\gl(V)}))$.
\end{remark}

Next, let $\varphi\colon A\to A'$ be a morphism of \cdga's.
The map $\varphi\otimes \id_{\g}\colon A^1\otimes \g \to
A'^1\otimes \g$ restricts to a map $\bar\varphi\colon \F(A,\g) \to
\F(A',\g)$.  The next lemma shows that the resonance varieties
of $A$ and $A'$ agree, roughly in the range for which $\varphi$
is an isomorphism.

\begin{lemma}
\label{lem:functr}
Suppose $\varphi\colon A^{\hdot}\to A'^{\hdot}$  is an isomorphism
up to degree $q$, and a monomorphism in degree $q+1$, for some
$q\ge 0$.
\begin{enumerate}
\item\label{f1}
If $q\ge 1$, the map $\bar\varphi$ is an isomorphism which identifies
$\RR^i_r(A,\theta)$ with $\RR^i_r(A',\theta)$ for each $i\le q$,
and sends $\RR^{q+1}_r(A,\theta)$ into $\RR^{q+1}_r(A',\theta)$,
for all $r\ge 0$.
\item\label{f2}
If $q= 0$, the map $\bar\varphi$ is an embedding which identifies
$\RR^0_r(A,\theta)$ with $\RR^0_r(A',\theta)\cap \F(A,\g)$,
and sends $\RR^1_r(A,\theta)$ into $\RR^1_r(A',\theta)$,
for all $r\ge 0$.
\end{enumerate}
\end{lemma}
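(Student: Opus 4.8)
The plan is to analyze how the cochain map $\varphi \otimes \id_V \colon (A \otimes V, d_\omega) \to (A' \otimes V, d_{\omega'})$ behaves in low degrees, degree by degree, using the hypothesis on $\varphi$. First I would record the elementary fact that since $\varphi$ is an isomorphism in degrees $\le q$, the induced map $\bar\varphi \colon A^1 \otimes \g \to A'^1 \otimes \g$ is an isomorphism (of affine spaces) when $q \ge 1$, and an embedding onto a Zariski-closed subset when $q = 0$; in either case it sends flat connections to flat connections by the functoriality of $\F(-,\g)$ already noted in the text, and compatibility with the Maurer--Cartan equation shows the image (resp.\ preimage) of $\F(A',\g)$ is exactly $\F(A,\g)$. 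This handles the ``parameter space'' half of each statement. For $q = 0$ the subtlety is that $\bar\varphi$ need not be surjective onto $\F(A',\g)$, which is why the identification of $\RR^0_r$ must be intersected with $\F(A,\g)$.

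Next, fix $\omega \in \F(A,\g)$ and set $\omega' = \bar\varphi(\omega)$. I would then compare the Aomoto complexes $(A^{\le q+1} \otimes V, d_\omega)$ and $(A'^{\le q+1} \otimes V, d_{\omega'})$ via the cochain map $\varphi \otimes \id_V$. Because $\varphi \colon A^j \to A'^j$ is an isomorphism for $j \le q$ and injective for $j = q+1$, the map $\varphi \otimes \id_V$ is an isomorphism in degrees $\le q$ and injective in degree $q+1$ (tensoring with the finite-dimensional space $V$ preserves both properties). A standard diagram chase then shows: (i) $H^i$ is carried isomorphically for $i \le q-1$; (ii) in degree $q$ the map on cohomology is injective, and it is also surjective because a cocycle in $A'^q \otimes V$ pulls back to a chain in $A^q \otimes V$ whose coboundary lands in $A'^{q+1} \otimes V$ via the injective map $\varphi \otimes \id_V$, forcing it to be a cocycle upstairs; (iii) in degree $q+1$, injectivity of $\varphi \otimes \id_V$ on cochains, combined with the isomorphism in degree $q$, gives injectivity on cohomology. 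Concretely, $\dim H^i(A \otimes V, d_\omega) = \dim H^i(A' \otimes V, d_{\omega'})$ for $i \le q$, and $\dim H^{q+1}(A \otimes V, d_\omega) \le \dim H^{q+1}(A' \otimes V, d_{\omega'})$. Translating these (in)equalities through the definition~\eqref{eq:rra} of resonance varieties yields exactly the asserted identifications $\RR^i_r(A,\theta) = \bar\varphi^{-1}(\RR^i_r(A',\theta))$ for $i \le q$ and the inclusion for $i = q+1$; for $q \ge 1$ the map $\bar\varphi$ is an isomorphism of affine spaces so ``$=\bar\varphi^{-1}(\cdot)$'' becomes ``identifies with,'' while for $q = 0$ the degree-$0$ statement becomes the intersection with $\F(A,\g)$ as in Lemma~\ref{lem:aom0}.

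The main obstacle, and the one place where care is genuinely needed, is the surjectivity in degree $q$ of the induced map on Aomoto cohomology: one must check that the argument does not secretly require $\varphi$ to be surjective in degree $q+1$. The resolution is exactly point (ii) above: we only need the coboundary of a pulled-back chain to be \emph{determined} by its image downstairs, which is guaranteed by injectivity of $\varphi$ in degree $q+1$ — this is why the hypothesis is stated as ``monomorphism in degree $q+1$'' rather than just ``isomorphism up to degree $q$.'' Everything else is a routine five-lemma-style bookkeeping, and the two cases $q \ge 1$ and $q = 0$ differ only in whether $\bar\varphi$ is an isomorphism or merely a closed embedding on the parameter spaces, which I would note explicitly at the start and then carry through uniformly.
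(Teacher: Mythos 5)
Your proof is correct and follows essentially the same route as the paper: the paper's proof simply observes that the hypothesis on $\varphi$ is inherited by the cochain map $\varphi\otimes\id_V$ and hence by the induced map on Aomoto cohomology, which is precisely the degree-by-degree diagram chase you spell out, and the translation to the resonance varieties is then immediate. The only stray remark is your claim that for $q=0$ the preimage of $\F(A',\g)$ under $\bar\varphi$ is exactly $\F(A,\g)$ --- that would require $\varphi$ to be injective in degree $2$, which is not assumed when $q=0$ --- but nothing in the lemma uses this, since only the closed embedding $\F(A,\g)\hookrightarrow \F(A',\g)$ and the cohomology comparison are needed.
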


\begin{proof}
The fact that the map $\bar\varphi\colon \F(A,\g) \to
\F(A',\g)$ is an isomorphism, respectively, an embedding follows
straight from the definitions.

Let $\omega\in \F(A,\g)$, and set $\omega'=\bar\varphi(\omega)$.
The map $\varphi$ then defines a map of cochain complexes,
$\tilde\varphi:=\varphi \otimes \id_{V}\colon (A\otimes V,d_{\omega}) \to
(A'\otimes V,d_{\omega'})$. It is readily seen that our hypothesis
on $\varphi$ is inherited by the map $\tilde\varphi$,
as well as by the induced homomorphism  in cohomology,
$\tilde\varphi_*\colon H^{\hdot}(A\otimes V,d_{\omega}) \to
H^{\hdot}(A'\otimes V,d_{\omega'})$.  All the assertions on the
resonance varieties now follow from this observation.
\end{proof}

If we replace in the hypothesis of the above lemma the map 
$\varphi\colon A\to A'$ by the induced homomorphism 
$\varphi_*\colon H^{\hdot}(A)\to H^{\hdot}(A')$, the conclusions 
may no longer follow.  We illustrate this assertion with a simple 
example.

\begin{example}
\label{ex:sol2}
Let $A'^{\hdot}=\bigwedge^{\hdot} (h^*,x^*)$
be the exterior algebra with generators in degree $1$ and with
differential given by $d h^*=0$ and $d x^*= x^*\wedge h^*$ (as 
we shall see later, this is the cochain $\cdga$ of the solvable, $2$-dimensional
Lie algebra $\sol_2$), and let $\varphi\colon A\hookrightarrow A'$
be the inclusion of the sub-$\cdga$ $A^{\hdot}=\bigwedge^{\hdot} (h^*)$.
We then have $H^1(A)=H^1(A')=\C$, and $\varphi_*$ is an isomorphism
in all degrees.  Nevertheless, $\RR^1_1(A)=\{0\}$, while  $\RR^1_1(A')=\{0,1\}$.
\end{example}

\begin{remark}
\label{rem:noth}
The behavior illustrated in the previous example is in marked contrast 
with the local case. Indeed, assume $\varphi_*\colon H^{\hdot}(A)\to H^{\hdot}(A')$ 
is an isomorphism up to degree $q$ and a monomorphism in degree $q+1$. 
Let $\fa$ be an Artinian local algebra, with maximal ideal $\m$. 
For $\omega\in \F(A,\g \otimes \m)$, set 
$\omega'=\varphi \otimes \id_{\g} \otimes \id_{\fa} (\omega)$. 
Then, as shown in \cite[Theorem 3.7]{DP}, the induced homomorphism, 
$\tilde\varphi_*\colon H^{\hdot}(A\otimes V \otimes \fa,d_{\omega}) \to
H^{\hdot}(A'\otimes V \otimes \fa,d_{\omega'})$, 
inherits the properties of $\varphi_*$. 

Returning now to the $\cdga$ map $\varphi\colon A\hookrightarrow A'$ 
from Example \ref{ex:sol2}, take $(\fa, \m)=(\C,0)$, and consider the
non-local flat connection $\omega=1\in \F(A, \C\otimes \fa)$. We then 
have $H^{1}(A\otimes \C \otimes \fa,d_{\omega})=0$, while 
$H^{1}(A'\otimes \C \otimes \fa,d_{\omega'}) \ne 0$.
\end{remark}

\subsection{Germs of jump loci}
\label{subsec:germs}
The link between the characteristic and resonance varieties
of a space is provided by a construction due to Dennis Sullivan \cite{Su}.

Given a path-connected space $X$, let $\Omega^{\hdot}(X)$ be
Sullivan's de Rham model of $X$, a $\cdga$ which mimics the algebra 
of forms on a smooth manifold, and for which the de Rham theorem holds.
We say that two \cdga's $A$ and $B$ have the {\em same $q$-type},
for some $q\ge 1$ (written $A\simeq_q B$) if they can be connected
by a zig-zag of $\cdga$ maps inducing isomorphisms in cohomology
in degree up to $q$, and a monomorphism in cohomology in degree
$q+1$.  We then have the following foundational result of Dimca
and Papadima \cite{DP} (see also \cite{DPS} for the case when
$q=1$ and $X$ is $1$-formal).

\begin{theorem}[\cite{DP}]
\label{thm:b1}
Let $X$ be a space of finite $q$-type, and assume $\Omega^{\hdot}(X)$
has the same $q$-type as a $\cdga$ $A$ of finite $q$-type.
Let $\iota\colon G\to \GL(V)$ be a rational representation of
a linear algebraic group $G$, and let $\theta\colon \g \to \gl(V)$
be its tangent representation.

There is then an analytic isomorphism
of germs, $e\colon \F(A,\g)_{(0)} \isom \Hom(\pi,G)_{(1)}$,
which restricts to analytic isomorphisms
$e\colon \RR^i_r(A,\theta)_{(0)} \isom \VV^i_r(X,\iota)_{(1)}$,
for all $i\le q$ and $r\ge 0$.
\end{theorem}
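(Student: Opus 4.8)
The plan is to prove the theorem by factoring it into two comparisons and gluing them. The first is a \emph{model-independence} step, which replaces the (infinite-dimensional) Sullivan algebra $\Omega^{\hdot}(X)$ by an arbitrary $q$-equivalent $\cdga$ $A$; the second is a \emph{holonomy} step, which identifies the de Rham picture with the topological one, i.e.\ $\F(\Omega^{\hdot}(X),\g)_{(0)}$ with $\Hom(\pi,G)_{(1)}$ together with its cohomology-jump stratification. Since everything takes place at the level of germs at the basepoints, the argument is run at the level of functors on Artinian local $\C$-algebras (equivalently, completed local rings), and then transported back to analytic germs via Artin approximation.

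For the model-independence step I would argue as follows. A $q$-equivalence $A\simeq_q\Omega^{\hdot}(X)$ is a zig-zag of $\cdga$ maps, each inducing an isomorphism on cohomology through degree $q$ and a monomorphism in degree $q+1$. As Example~\ref{ex:sol2} shows, such maps need \emph{not} induce isomorphisms of the global resonance varieties; the content is that they do so on germs at the basepoint. Tensoring such a map $\varphi\colon A\to B$ with $\g$ produces a morphism of differential graded Lie algebras $A\otimes\g\to B\otimes\g$ which, since $q\ge 1$, is an isomorphism on $H^0$ and $H^1$ and a monomorphism on $H^2$. By the deformation-theoretic comparison for DGLA morphisms of this kind (Goldman--Millson/Schlessinger--Stasheff type, equivariant for the gauge actions given by $H^0$), the associated Maurer--Cartan germs agree, so $\bar\varphi$ induces an isomorphism of formal germs $\F(A,\g)_{(0)}\cong\F(B,\g)_{(0)}$, refining Lemma~\ref{lem:functr} from underlying-map equivalences to cohomology equivalences. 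To carry the resonance stratification along I would invoke the Artinian-local comparison of twisted cohomology recalled in Remark~\ref{rem:noth} (Theorem~3.7 of \cite{DP}): for $\omega\in\F(A,\g\otimes\m)$ over an Artinian local algebra $(\fa,\m)$, $\varphi$ induces an isomorphism $H^{\le q}(A\otimes V\otimes\fa,d_{\omega})\cong H^{\le q}(B\otimes V\otimes\fa,d_{\omega'})$ and a monomorphism in degree $q+1$; hence the loci $\RR^i_r$, $i\le q$, match as formal subgerms of the isomorphic ambient germs. Composing along the zig-zag, and then applying Artin approximation to pass from formal to analytic germs, reduces the theorem to the single case $A=\Omega^{\hdot}(X)$.

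For the holonomy step, with $A=\Omega^{\hdot}(X)$, a flat connection $\omega\in\F(\Omega^{\hdot}(X),\g)$ near $0$ defines a flat connection on the trivial principal bundle $X\times G\to X$, whose holonomy is a representation $\rho=e(\omega)\colon\pi_1(X)\to G$; as recalled in Remark~\ref{rem:gm}, this monodromy assignment is analytic and is an isomorphism of germs $e\colon\F(\Omega^{\hdot}(X),\g)_{(0)}\isom\Hom(\pi,G)_{(1)}$ (the classical picture of \cite{GM}, equivariant for gauge vs.\ conjugation). It then remains to match cohomology dimensions at corresponding points, and here I would use the twisted de Rham theorem: the Aomoto complex $(\Omega^{\hdot}(X)\otimes V,d_{\omega})$ is precisely the de Rham complex of the bundle associated to $\omega$ with fiber $V$ via $\theta$, so it computes the local-system cohomology $H^{\hdot}(X,{}_{\iota\rho}V)$. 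Consequently $\dim_{\C}H^i(\Omega^{\hdot}(X)\otimes V,d_{\omega})=\dim_{\C}H^i(X,{}_{\iota\rho}V)$ for all $i$, whence $e$ carries $\RR^i_r(\Omega^{\hdot}(X),\theta)_{(0)}$ onto $\VV^i_r(X,\iota)_{(1)}$ for every $i$ and $r$, a fortiori for $i\le q$. Splicing this with the model-independence step along the identification $H^1(\Omega^{\hdot}(X))\cong H^1(A)$ yields the claimed isomorphisms.

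I expect the main obstacle to be the resonance refinement rather than the bare germ isomorphism, for two reasons. First, the twisted de Rham comparison must be made to work with Sullivan's polynomial forms and, when $X$ is merely a finite-$q$-type CW-complex rather than a smooth manifold, via a simplicial (or $1$-minimal-model/iterated-integral) avatar of parallel transport, and only in the degree range $i\le q$ where finiteness is available. Second, and more delicate, is propagating cohomology-jump information through the zig-zag: the coefficient Lie algebra $\g$ is finite-dimensional but not nilpotent, so the comparison cannot be performed globally on $\F$; it must be done over Artinian local bases and then reassembled, which is exactly the role played by \cite[Thm.~3.7]{DP} and by the formal-to-analytic passage. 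Once these ingredients are in place, the two steps combine to give the asserted analytic isomorphisms of germs.
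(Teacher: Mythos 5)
The paper does not actually prove Theorem~\ref{thm:b1}: it is imported verbatim from \cite{DP}, so there is no internal proof to compare against. Your two-step strategy---an Artinian-local Maurer--Cartan comparison along the zig-zag $A\simeq_q\Omega^{\hdot}(X)$, with the jump loci tracked via the twisted-cohomology invariance of \cite[Thm.~3.7]{DP} and the formal germs upgraded to analytic ones by Artin approximation, followed by the Sullivan/holonomy identification of $\F(\Omega^{\hdot}(X),\g)_{(0)}$ with $\Hom(\pi,G)_{(1)}$ and a twisted de~Rham comparison---is essentially the argument actually carried out in \cite{DP}, and you correctly isolate the two genuinely delicate points (working over Artinian bases because $\g$ is not nilpotent, and replacing differential-geometric parallel transport by its simplicial/minimal-model avatar for a non-manifold $X$). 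The one precision worth adding is that the Goldman--Millson-type input must be used in the form of a bijection of Maurer--Cartan \emph{sets}, not merely of gauge-equivalence classes, since the target is $\Hom(\pi,G)$ rather than a character variety; this does follow from the gauge-equivariance you note, because connectedness of the \cdga's identifies the gauge groups on both sides, and it is exactly the statement proved in \cite{DP} by induction on the length of the Artinian ideal.
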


Using this result, we now derive as an application a
rather general form of the much-studied ``tangent cone inclusion".

First, we need to recall a notion originally due to Body and Sullivan,
see \cite{BMSS} and \cite{DP}.
Let $A$ be a rationally defined \cdga.  We say
$A$ has {\em positive weights}, if $A^i=\bigoplus_{j\in \Z}
A^i_j$ for each degree $i\ge 0$, and, moreover, these
vector space decompositions are compatible
with the $\cdga$ structure, and satisfy the condition
$A^1_j=0$, for all $j\le 0$.

\begin{theorem}
\label{thm:tcone}
Let $X$ be a space of finite $q$-type, and assume $\Omega^{\hdot}(X)$
has the same $q$-type as a $\cdga$ $A$ of finite $q$-type.  Then:
\begin{enumerate}
\item \label{tc1}
The tangent cone at $0$ to the resonance variety $\RR^i_r(A)$
is contained in the usual resonance variety $\RR^i_r(H^{\hdot}(A))$,
for all $i\le q$ and all $r\ge0$.
\item \label{tc2}
Suppose that, moreover, $A$ is defined over $\Q$, the
identification $H^1(\Omega^{\hdot}(X)) \cong H^1(A)$
preserves $\Q$-structures, and $A$ has positive weights.
Then $\RR^i_r(A)\subseteq \RR^i_r(H^{\hdot}(A))$, for all
$i\le q$ and all $r\ge 0$.
\end{enumerate}
\end{theorem}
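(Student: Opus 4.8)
The plan is to deduce both parts from Theorem \ref{thm:b1} together with standard facts about tangent cones and about the rational/weight structure on resonance varieties. For part \eqref{tc1}, I would argue as follows. By Theorem \ref{thm:b1} there is an analytic isomorphism of germs $e\colon \F(A,\C)_{(0)} \isom \TT(\pi)_{(1)}$ carrying $\RR^i_r(A)_{(0)}$ onto $\VV^i_r(X)_{(1)}$. Taking tangent cones commutes with analytic isomorphisms of germs, and the tangent cone of the germ of an affine variety agrees with the tangent cone of the variety itself, so $TC_0(\RR^i_r(A)) \cong TC_1(\VV^i_r(X))$ as subvarieties of $H^1(A) \cong H^1(X,\C)$. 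On the topological side, Libgober's tangent cone inclusion (cited in \S\ref{intro:overview}) gives $TC_1(\VV^i_r(X)) \subseteq \RR^i_r(X) = \RR^i_r(H^{\hdot}(X,\C))$. Finally, since $A \simeq_q \Omega^{\hdot}(X)$, the induced map $H^{\hdot}(A) \to H^{\hdot}(X,\C)$ is an isomorphism up to degree $q$ and a monomorphism in degree $q+1$, so by the cohomology-algebra version of Lemma \ref{lem:functr}\eqref{f1} (applied with zero differentials, where $\bar\varphi$ is literally the identity on $H^1$) we get $\RR^i_r(H^{\hdot}(X,\C)) = \RR^i_r(H^{\hdot}(A))$ for $i \le q$. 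Chaining these gives $TC_0(\RR^i_r(A)) \subseteq \RR^i_r(H^{\hdot}(A))$.

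For part \eqref{tc2}, the extra hypotheses — $A$ defined over $\Q$, the comparison isomorphism preserving $\Q$-structures, and $A$ having positive weights — are exactly what is needed to upgrade the tangent cone inclusion to an honest inclusion of varieties. The key point is that the positive-weights assumption on $A$ induces a $\C^{\times}$-action on the affine space $H^1(A) = \F(A,\C)$ (the differential on $A^1$ being forced to vanish, since $A^1_j = 0$ for $j \le 0$ and $d$ has weight $0$, so $Z^1(A) = A^1$ is already weight-homogeneous) under which the defining equations of $\RR^i_r(A)$ become weight-homogeneous; hence $\RR^i_r(A)$ is invariant under this action. A variety stable under such a linear $\C^{\times}$-action with all weights positive is equal to its own tangent cone at the fixed point $0$. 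I would cite the relevant statement from \cite{DP} (or \cite{BMSS}) that the resonance varieties of a positive-weight $\cdga$ are invariant under the induced torus action, so that $\RR^i_r(A) = TC_0(\RR^i_r(A))$, and then invoke part \eqref{tc1}. The role of the $\Q$-structure hypothesis is to ensure that the weight decomposition and the comparison with $H^1(X)$ are compatible, which is needed for \cite{DP}'s machinery to apply verbatim.

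The main obstacle is not any single hard computation but rather assembling the correct references and making sure the hypotheses line up: specifically, verifying that "positive weights" really does force the differential to vanish in degree $1$ and produces the claimed $\C^{\times}$-action on $\F(A,\C)$ with all positive weights, and that $\RR^i_r(A)$ — cut out by minors of the Aomoto differential, whose entries are linear in $\omega \in A^1$ — is homogeneous with respect to that action. This is where I expect to lean most heavily on the positive-weights formalism of \cite{BMSS} and \cite{DP}; once that invariance is in hand, the equality $\RR^i_r(A) = TC_0(\RR^i_r(A))$ is formal, and part \eqref{tc2} follows immediately from part \eqref{tc1}. A secondary, purely bookkeeping point is to confirm that Libgober's result and the identification $\RR^i_r(H^{\hdot}(X,\C)) = \RR^i_r(H^{\hdot}(A))$ hold in the stated degree range $i \le q$, which follows from $q$-finiteness of both $X$ and $A$ together with Lemma \ref{lem:functr}.
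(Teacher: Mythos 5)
Your part \eqref{tc1} follows the paper's route (Theorem \ref{thm:b1}, Libgober, Lemma \ref{lem:functr}), but skips a step the paper cannot avoid: Libgober's theorem, as used here, is stated for connected CW-complexes of \emph{finite type}, whereas $X$ is only assumed to be of finite $q$-type. One cannot apply $\TC_1(\VV^i_r(X))\subseteq \RR^i_r(X)$ to $X$ directly; the paper interposes the finite-approximation result \cite[Proposition 4.1]{PS-specres}, producing a finite complex $Y$ with the same character torus, rank $1$ characteristic varieties and resonance varieties up to degree $q$, and applies Libgober to $Y$. Your closing ``bookkeeping point'' gestures at this but does not supply the approximation. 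A smaller issue: ``tangent cones commute with analytic isomorphisms of germs'' only identifies the two cones up to the derivative of the germ isomorphism at the basepoint; the paper pins this down by recalling from \cite[Theorem B]{DP} that $e$ is induced by $\exp$, so the identification $\TC_0(\RR^i_r(A))=\TC_1(\VV^i_r(X))$ takes place under the canonical isomorphism $H^1(A)\cong H^1(X,\C)$ --- which is what makes the final containment a statement in the right ambient space.

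The serious gap is in part \eqref{tc2}. You assert that a subvariety of $H^1(A)$ invariant under a linear $\C^{\times}$-action with all weights positive equals its tangent cone at $0$. This is false unless all weights are equal: the curve $\{a=b^2\}\subset\C^2$, with $a$ of weight $2$ and $b$ of weight $1$, is invariant under $t\cdot(a,b)=(t^2a,tb)$, yet its tangent cone at $0$ is the line $\{a=0\}$, which neither contains nor is contained in the curve. Positive weights in the sense of Body--Sullivan explicitly allow distinct weights in degree $1$ (Gysin models carry weights $1$ and $2$ there, and \S\ref{subsec:resvar} warns that $\RR^i_r(A)$ need not be homogeneous when $d\neq 0$), so weighted-homogeneity of $\RR^i_r(A)$ is strictly weaker than being a cone, and the equality $\RR^i_r(A)=\TC_0(\RR^i_r(A))$ does not follow formally from torus-invariance. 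The paper's proof instead invokes \cite[Theorem C]{DP}: under precisely the hypotheses of \eqref{tc2}, each $\RR^i_r(A)$ with $i\le q$ is a \emph{finite union of linear subspaces} of $H^1(A)$, whence it trivially equals its tangent cone at $0$, and part \eqref{tc1} concludes. That linearity is a substantive theorem (its proof does use positive weights and the $\Q$-structure, but through the comparison with characteristic varieties), not a formal consequence of invariance; your argument needs to be replaced by, or reduced to, that input.
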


\begin{proof}
Part \eqref{tc1}.
By \cite[Theorem B]{DP}, the local analytic isomorphism
from Theorem \ref{thm:b1} is induced by the exponential map
$\exp\colon H^1(X,\C)\to \TT(\pi_1(X))$.  Hence,
$\TC_0(\RR^i_r(A))$ is identified with $\TC_1(\VV^i_r(X))$.
Furthermore, since $\Omega^{\hdot}(X)\simeq_q A$,
Lemma \ref{lem:functr} allows us to identify $\RR^i_r(X)$
with $\RR^i_r(H^{\hdot}(A))$, for all $i\le q$ and all $r$.

On the other hand, by the finite
approximation result from \cite[Proposition 4.1]{PS-specres},
there is a connected, finite CW-complex $Y$ and a map
$f\colon Y\to X$ which induces an isomorphism in cohomology
up to degree $q$, and identifies the respective character tori, rank $1$ characteristic
varieties, and usual resonance varieties, again up to degree $q$.
Finally, by a result of Libgober \cite{Li}, we have that
$\TC_1(\VV^i_r(Y))\subseteq \RR^i_r(Y)$ for all $i$ and $r$.

Part \eqref{tc2}.
By \cite[Theorem C]{DP}, for each
$i\le q$, the variety $\RR^i_r(A)$ is a finite union of linear subspaces
of $H^1(A)$.  Consequently, $\TC_0(\RR^i_r(A))=\RR^i_r(A)$.
\end{proof}

\subsection{Spaces with $q$-finite models}
\label{subsec:fin mod}
As noted in \cite{DP}, there are several important classes of spaces
that satisfy the finiteness hypothesis of Theorems \ref{thm:b1} and \ref{thm:tcone}.
Let us briefly describe the main examples of spaces $X$ which are both of
finite $q$-type and admit a $\cdga$ model $A\simeq_q \Omega^{\hdot}(X)$
of finite $q$-type.

\begin{example}
\label{ex:formal}
A path-connected space $X$ is said to be {\em $q$-formal}, for some
$q\ge 1$, if $\Omega^{\hdot}(X)$ has the same $q$-type as the cohomology
ring $H^{\hdot}(X,\C)$, endowed with the $0$ differential.   Evidently, if $X$ is both
$q$-finite and $q$-formal, we may take $A=(H^{\hdot}(X,\C),d=0)$ as a
suitable model for it.

We say a discrete group $\pi$ is $q$-formal if it admits a classifying
space $K(\pi,1)$ which is $q$-formal. In group-theoretic terms,
$\pi$ is $1$-formal if and only if the Malcev--Lie algebra of $\pi$
(in the sense of Quillen \cite{Qu}) is the completion of a quadratic
Lie algebra. It is readily seen that a connected CW-complex is
$1$-formal if and only if its fundamental group is $1$-formal.
\end{example}

\begin{example}
\label{ex:qproj}
Let $X$ be an irreducible, smooth quasi-projective variety---for
short, a {\em quasi-projective manifold}. A finite model for $X$
(with $q=\infty$) can be taken to be any ``Gysin model", as constructed
by Morgan in \cite{Mo}, starting from a good compactification of $X$.
\end{example}

\begin{example}
\label{ex:nilp}
A well-known construction going back to Chevalley and 
Eilenberg \cite{CE} associates in a functorial way to each
finite-dimen\-sional Lie algebra $\f$ a ``cochain" differential 
graded algebra, $\CC^{\hdot}(\f)=  \big(\bigwedge^{\hdot} (\f^*) ,d\big)$,
with differential $d\colon \f^*\to \f^*\wedge \f^*$
equal to minus the dual of the Lie bracket, and further
extended to the exterior algebra by the graded Leibnitz rule.

Now suppose $X=K(\pi,1)$ is a classifying space for a finitely
generated, nilpotent group $\pi$.  We then may take
$A^{\hdot}=\CC^{\hdot}(\n)$, where $\n$ is the
(nilpotent) Malcev--Lie algebra associated to $\pi$,
as in \cite{Qu}.  With this choice of model, Theorem \ref{thm:b1}
holds for an arbitrary representation $\iota\colon G\to \GL(V)$
and for all $i$ and $r$, see \cite[Corollary 9.16]{DP}.
\end{example}

The next example (a particular case of the construction described 
just above) shows that the inclusions from Theorem \ref{thm:tcone} 
may well be strict.

\begin{example}
\label{ex:heis}
Let $\pi$ be the group of integral, upper diagonal $3\times 3$
matrices with $1$s on the diagonal. Then $\pi$ is a
finitely generated, torsion-free nilpotent group, with 
Malcev--Lie algebra the $3$-dimensional, $2$-step nilpotent
Lie algebra $\heis$.  In the usual coordinates, the
cochain algebra $A^{\hdot}=\CC^{\hdot}(\heis)$
may be presented as $A^{\hdot}=\bigwedge^{\hdot} (x^*,y^*,z^*)$,
with $d x^*=dy^*=0$ and $dz^*=x^*\wedge y^*$.  Clearly,
$A^{\hdot}$ is  $\infty$-finite, defined over $\Q$, and 
has positive weights ($1$ on $x^*$ and $y^*$, and $2$ on $z^*$).
On the other hand, it is readily checked that $\RR^1_1(A)=\{0\}$,
while $\RR^1_1(H^{\hdot}(A)) = \C^2$.
\end{example}

\section{Essentially rank $1$ flat connections}
\label{sect:rk1}

In this section, we single out an important subset of the parameter
space for flat connections, and study its relationship with the depth-$1$ 
resonance varieties. 

\subsection{Rank $1$ flat connections}
\label{subsec:rk1 flat}
We start with a simple definition. As before, let $A$ be a connected \cdga,
and let $\g$ be a Lie algebra.

\begin{lemma}
\label{lem:pmap}
The bilinear map $P\colon A^1\times \g\to A^1\otimes \g$,
$(\eta,g)\mapsto \eta\otimes g$ induces a map
\begin{equation}
\label{eq:pmap}
P\colon H^1(A)\times \g \to \F(A,\g).
\end{equation}
\end{lemma}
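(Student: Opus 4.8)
The plan is to verify the Maurer--Cartan equation by hand for every tensor of the form $\eta\otimes g$ with $\eta$ closed, and then observe that no quotienting is needed. First I would record that, since $A$ is connected, $B^1(A)=dA^0=d(\C\cdot 1)=0$, so the natural map $Z^1(A)\to H^1(A)$ is an isomorphism; thus $H^1(A)$ may be harmlessly identified with the subspace $Z^1(A)\subseteq A^1$, and it suffices to show that $P$ carries $Z^1(A)\times \g$ into $\F(A,\g)$.

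Next, fix $\eta\in Z^1(A)$ and $g\in\g$, and set $\omega=\eta\otimes g\in A^1\otimes\g$. The differential term vanishes, since $\partial\omega=d\eta\otimes g=0$. For the bracket term, the formula for the Lie bracket on $A\otimes\g$ gives $[\omega,\omega]=[\eta\otimes g,\eta\otimes g]=(\eta\,\eta)\otimes[g,g]$, and here $\eta\,\eta=0$ by graded-commutativity of $A$ in degree $1$ (redundantly, also $[g,g]=0$), so $[\omega,\omega]=0$. Hence $\partial\omega+\tfrac12[\omega,\omega]=0$, i.e.\ $\omega\in\F(A,\g)$. Bilinearity of $P$ is immediate from its definition, so $P$ does induce a bilinear map $H^1(A)\times\g\to\F(A,\g)$, as claimed.

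I expect no real obstacle here: the statement is essentially a bookkeeping consequence of the definitions, the only point deserving comment being the identification $H^1(A)=Z^1(A)$ that uses connectedness of $A$. One could also phrase the argument structurally: $Z^1(A)\otimes\g$ is a sub-dgla of $A\otimes\g$ concentrated in degree $1$, with zero differential and zero bracket, on which the Maurer--Cartan equation holds automatically; the map $P$ is just the restriction of the identity to this piece, precomposed with $(\eta,g)\mapsto\eta\otimes g$.
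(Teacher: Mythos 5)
Your proof is correct and follows the same route as the paper's: identify $H^1(A)$ with $Z^1(A)$ via connectedness, then check that $\eta\otimes g$ satisfies the Maurer--Cartan equation (the paper simply says this is "clear," whereas you spell out that $\partial\omega=d\eta\otimes g=0$ and $[\omega,\omega]=\eta\eta\otimes[g,g]=0$). No issues.
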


\begin{proof}
If $\eta\in Z^1(A)$,
then clearly  $\eta\otimes g$ satisfies the Maurer--Cartan
equation \eqref{eq:flat}.  Thus, $P$ restricts
to a map $P\colon Z^1(A)\times \g \to \F(A,\g)$.  Now,
since $A$ is connected, $Z^1(A) =H^1(A)$, and so we are done.
\end{proof}

\begin{definition}
\label{def:flat1}
The {\em essentially rank one}\/ part of $\F(A,\g)$ is the set
$\F^1(A,\g):=P(H^1(A)\times \g)$. We call its complement
the {\em regular}\/ part of $\F(A,\g)$.
\end{definition}

Now suppose $\theta\colon \g\to \gl (V)$ is a finite-dimensional
representation.  We may then single out a subset of $\F^1(A,\g)$, 
defined as
\begin{equation}
\label{eq:piatheta}
\Pi(A,\theta)=P(H^1(A)\times V(\det\circ \theta)),
\end{equation}
where $\det\colon \gl(V)\to \C$ is the determinant, and
$V(\det\circ \theta)=\{ g\in \g \mid \det(\theta(g))=0\}$.

\begin{lemma}
\label{lem:rk1 zar}
Suppose $A$ is $1$-finite and $\g$ is finite-dimensional. Then,
\begin{enumerate}
\item \label{r1}
$\F^1(A,\g)$ is an irreducible, Zariski-closed subset of $\F(A,\g)$ 
containing $0$.
\item \label{r2}
$\Pi(A,\theta)$ is a Zariski-closed subset of $\F^1(A,\g)$ 
containing $0$.
\end{enumerate}
\end{lemma}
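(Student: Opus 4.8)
The plan is to reduce the entire statement to the geometry of rank-$\le 1$ tensors inside the finite-dimensional vector space $H^1(A)\otimes\g$. Since $A$ is connected, $Z^1(A)=H^1(A)$, so as a set $\F^1(A,\g)=\{\eta\otimes g\mid \eta\in H^1(A),\ g\in\g\}$; by Lemma~\ref{lem:pmap} this set lies in $\F(A,\g)$, and it obviously contains $0$. Because $A$ is $1$-finite and $\dim\g<\infty$, the space $H^1(A)\otimes\g$ is a finite-dimensional linear subspace of $A^1\otimes\g$, hence Zariski-closed there, and $\F(A,\g)$ carries the subspace topology from $A^1\otimes\g$. It therefore suffices to prove that $\F^1(A,\g)$ is a closed, irreducible subset of $H^1(A)\otimes\g$, and that $\Pi(A,\theta)$ is a closed subset of $H^1(A)\otimes\g$ that is contained in $\F^1(A,\g)$.

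For part~\eqref{r1}, I would identify $\F^1(A,\g)$ with the affine cone over the Segre variety $\PP(H^1(A))\times\PP(\g)\hookrightarrow\PP(H^1(A)\otimes\g)$. Transporting tensors to homomorphisms via the isomorphism $\psi\colon H^1(A)\otimes\g\isom\Hom(H^1(A)^*,\g)$ (which is an isomorphism since $\dim H^1(A)<\infty$), this cone is exactly the determinantal locus $\{\omega\mid \rank\psi(\omega)\le 1\}$, cut out by the vanishing of all $2\times 2$ minors, hence Zariski-closed. Irreducibility follows either because $\F^1(A,\g)$ is the image of the irreducible variety $H^1(A)\times\g$ under the morphism $P$ (the closure of the image of an irreducible variety under a morphism is irreducible, and here the image is already closed), or because it is the affine cone over the irreducible projective Segre variety.

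For part~\eqref{r2}, the key observation is that $V(\det\circ\theta)$ is a Zariski-closed cone in $\g$: it is the zero locus of the polynomial function $\det\circ\theta\colon\g\to\C$, which is homogeneous of degree $\dim V$. Hence $\Pi(A,\theta)=P\bigl(H^1(A)\times V(\det\circ\theta)\bigr)$ is the affine cone over the image $\Sigma_0$ of $\PP(H^1(A))\times\PP\bigl(V(\det\circ\theta)\bigr)$ under the Segre embedding. Since $\PP\bigl(V(\det\circ\theta)\bigr)$ is closed in $\PP(\g)$ and the Segre embedding is a closed immersion, $\Sigma_0$ is a closed projective subvariety, so its affine cone $\Pi(A,\theta)$ is Zariski-closed in $H^1(A)\otimes\g$, hence in $\F^1(A,\g)$. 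The inclusion $\Pi(A,\theta)\subseteq\F^1(A,\g)$ is immediate from the definitions, and $0\in\Pi(A,\theta)$ because $\det\theta(0)=0$ (here we use $V\ne 0$).

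The step I expect to require the most care is the bookkeeping of degenerate strata in part~\eqref{r2}: a simple tensor $\eta\otimes g$ determines the pair $(\eta,g)$ only up to simultaneous rescaling, and not at all when $\eta=0$ or $g=0$, so one must check that ``$g\in V(\det\circ\theta)$'' really is a well-defined, scale-invariant closed condition on the nonzero part of $\F^1(A,\g)$, and treat the boundary cases $V(\det\circ\theta)=\{0\}$ (where $\Pi(A,\theta)=\{0\}$) and $V(\det\circ\theta)=\g$ (where $\Pi(A,\theta)=\F^1(A,\g)$) separately. A purely affine alternative, avoiding the Segre embedding altogether, is to observe that $\omega\mapsto\im\psi(\omega)$ defines a morphism from $\F^1(A,\g)\setminus\{0\}$ to $\PP(\g)$, to pull back along it the closed cone $\PP(V(\det\circ\theta))$, and then to note that the closure inside $\F^1(A,\g)$ of the resulting conical locus is obtained simply by adjoining $0$.
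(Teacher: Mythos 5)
Your proof is correct and follows essentially the same route as the paper, whose entire argument is the observation that $\F^1(A,\g)$ is either $\{0\}$ or the cone on $\PP(H^1(A))\times\PP(\g)$. You have simply filled in the standard details (the rank-$\le 1$ determinantal description, closedness of the Segre image, and the fact that $V(\det\circ\theta)$ is a closed cone, which makes the condition on $g$ scale-invariant and handles the degenerate strata).
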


\begin{proof}
Plainly, $\F^1(A,\g)$ is either $\{0\}$, or the cone on
$\PP(H^1(A)) \times \PP(\g)$.  The other claims follow
at once.
\end{proof}

The above definitions allow us to describe the $0$-th resonance
variety of a $\cdga$ in a simple, yet important situation.
\begin{lemma}
\label{lem:res0}
Suppose $A$ is $1$-finite, $\g=\sl_2$, and $V=\C^2$ is the defining
representation, given by the inclusion $\theta\colon \sl_2\inj \gl_2$.
Then $\RR^0_1(A,\theta)=\Pi(A,\theta)$.  Moreover, the variety 
$\RR^0_1(A,\theta)$  is irreducible, and its dimension is positive, 
provided $H^1(A)\ne 0$.
\end{lemma}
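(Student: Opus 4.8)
The plan is to compute $\RR^0_1(A,\theta)$ directly from Lemma \ref{lem:aom0} and then recognize the answer as $\Pi(A,\theta)$. Fix a basis $\{\eta_i\}$ of $A^1$ and write an arbitrary flat connection as $\omega=\sum_i\eta_i\otimes g_i$. Lemma \ref{lem:aom0} gives $H^0(A\otimes V,d_\omega)=\bigcap_i\ker\theta(g_i)$, so $\omega\in\RR^0_1(A,\theta)$ iff this intersection is non-zero, i.e. iff the subspace $W=\spn\{g_i\}\subseteq\sl_2$ acts with a common kernel vector on $\C^2$. The first step is to show that this forces $W$ to be (at most) one-dimensional: if $\dim W\ge 2$, then $W$ contains two linearly independent elements of $\sl_2$, and a short linear-algebra argument shows two independent traceless $2\times2$ matrices cannot share an eigenvector with eigenvalue $0$ unless one is a multiple of a nilpotent and the other is not in its span — in fact, the subalgebra of matrices killing a fixed line $\ell\subseteq\C^2$ is the Borel, which meets $\sl_2$ in a $1$-dimensional space (the nilpotent radical). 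Hence a common $0$-eigenvector exists only when all $g_i$ are proportional to a single nilpotent $g$, i.e. $\theta(g)$ is nilpotent, hence singular.

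Concretely: $\bigcap_i\ker\theta(g_i)\ne 0$ iff, after rescaling, $\omega=\eta\otimes g$ for some $\eta\in H^1(A)$ and some $g\in\sl_2$ with $\det\theta(g)=0$ — because $g\in\sl_2$ with a zero eigenvalue is exactly $g$ nilpotent, and a set $\{g_i\}\subseteq\sl_2$ with a common zero eigenvector must span such a line. This is precisely the set $P(H^1(A)\times V(\det\circ\theta))=\Pi(A,\theta)$, giving $\RR^0_1(A,\theta)=\Pi(A,\theta)$. Note one must also check the reverse inclusion, which is immediate: if $\omega=\eta\otimes g$ with $\theta(g)$ singular, then $\theta(g)$ has a kernel vector $v$, and $d_\omega(1\otimes v)=\eta\otimes\theta(g)v=0$, so $H^0\ne 0$. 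I would also invoke Lemma \ref{lem:rk1 zar}\eqref{r2} to note $\Pi(A,\theta)$ is Zariski-closed, consistent with $\RR^0_1$ being a variety.

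For the irreducibility and dimension claims, I would argue as follows. When $H^1(A)\ne 0$, the variety $\Pi(A,\theta)$ is the image of the irreducible variety $H^1(A)\times V(\det\circ\theta)$ under the regular map $P$; since $V(\det\circ\theta)=\{g\in\sl_2\mid\det\theta(g)=0\}$ is the nilpotent cone of $\sl_2$, which is an irreducible surface (the quadric $\det=0$ in $\C^3$), the product is irreducible, and hence so is its image $\Pi(A,\theta)$. For positivity of dimension: pick any $0\ne\eta\in H^1(A)$ and any fixed nilpotent $0\ne g$; then $\{t\eta\otimes g: t\in\C\}$ is a one-dimensional family inside $\Pi(A,\theta)$, so $\dim\RR^0_1(A,\theta)\ge 1>0$.

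The main obstacle I anticipate is the linear-algebra lemma that a linearly independent pair in $\sl_2$ cannot have a common $0$-eigenvector — more precisely, pinning down that the \emph{only} way $\bigcap_i\ker\theta(g_i)\ne 0$ occurs is the rank-one nilpotent case, so that $\RR^0_1$ does not accidentally pick up higher-rank flat connections. This is elementary (the stabilizer of a line in $\PP^1$ under $\SL_2$ is a Borel, whose Lie algebra meets $\sl_2$ in the one-dimensional nilradical), but it is the conceptual crux: everything else is the definition-chasing identification with $\Pi(A,\theta)$ and the standard fact that the nilpotent cone of $\sl_2$ is an irreducible quadric surface.
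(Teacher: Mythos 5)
Your proposal is correct and follows essentially the same route as the paper: apply Lemma \ref{lem:aom0}, observe that the traceless $2\times 2$ matrices annihilating a fixed non-zero vector form a single nilpotent line (the paper does this in coordinates, putting all $g_i$ in the form $\left( \begin{smallmatrix} 0 & \lambda_i \\ 0&0\end{smallmatrix}\right)$), and deduce the last assertions from the irreducibility of the hypersurface $V(\det\circ\theta)\subset\sl_2$. One terminological slip worth fixing: the matrices \emph{killing} a line are not the Borel (that is the \emph{stabilizer}, which meets $\sl_2$ in a $2$-dimensional subalgebra); what you want is that the annihilator of a line meets $\sl_2$ in the $1$-dimensional nilradical, which is the fact your argument actually uses.
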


\begin{proof}
Pick a basis $\{\eta_i\}$ for $A^1$, and let
$\omega =\sum_i \eta_i \otimes g_i $ be a
flat $\sl_2$-connection.  From Lemma \ref{lem:aom0}, we know that $\omega$
belongs to $\RR^0_1(A,\theta)$ if and only if $\bigcap_i \ker(\theta(g_i))$
contains a non-zero vector in $\C^2$. In coordinates, this means
that all matrices $g_i$ are of the form $\left( \begin{smallmatrix}
0 & \lambda_i \\ 0&0\end{smallmatrix}\right)$, with
$\sum_i \lambda_i \eta_i \in H^1(A)$. It follows that
$\RR^0_1(A,\theta)=\Pi(A, \theta)$,
as claimed.  The remaining assertions follow from the fact
that $V(\det\circ \theta)$ is an irreducible hypersurface
in $\sl_2$.
\end{proof}

\subsection{Rank $1$ flat connections and resonance}
\label{subsec:rk1 res}

The next results describe in full generality the way in which
the essentially rank $1$ part of the set of flat connections,
$\F^1(A,\g)$, cuts the depth $1$ resonance varieties
$\RR^k_1(A,\theta)$. In view of Remark \ref{rem:univ},
we start with the ``universal" case.

\begin{theorem}
\label{thm:ess1res}
Let $A$ be a connected $\cdga$.
Suppose $\g=\gl_n$ and $\theta= \id_{\g}$, and
let $\omega=\eta\otimes b$ be an arbitrary element of
$\F^1(A,\g)$.  Then $\omega$ belongs to
$\RR^k_1(A,\theta)$ if and only if there is an
eigenvalue $\lambda$ of $b$ such that $\lambda \eta$
belongs to $\RR^k_1(A)$.
\end{theorem}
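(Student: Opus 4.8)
The plan is to analyze the Aomoto complex $(A\otimes V,d_\omega)$ attached to the rank $1$ connection $\omega=\eta\otimes b$ by decomposing $V=\C^n$ according to the generalized eigenspaces of $b$. Explicitly, from \eqref{eq:adw} the differential acts as $d_\omega(\alpha\otimes v)=d\alpha\otimes v+\eta\alpha\otimes bv$. Since $b$ commutes with itself, each generalized eigenspace $V_\lambda=\ker(b-\lambda)^n$ is preserved by the operator $v\mapsto bv$, and hence $A\otimes V_\lambda$ is a subcomplex of $(A\otimes V,d_\omega)$. Writing $V=\bigoplus_\lambda V_\lambda$, we get $H^k(A\otimes V,d_\omega)=\bigoplus_\lambda H^k(A\otimes V_\lambda,d_\omega)$, so $\omega\in\RR^k_1(A,\theta)$ if and only if $H^k(A\otimes V_\lambda,d_\omega)\ne 0$ for some eigenvalue $\lambda$ of $b$. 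This reduces the problem to understanding, for a single $\lambda$, when the complex $A\otimes V_\lambda$ has nonvanishing cohomology in degree $k$.

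First I would handle the case where $b$ acts on $V_\lambda$ as the scalar $\lambda$ (i.e.\ the semisimple situation), since then $d_\omega$ on $A\otimes V_\lambda$ is literally $\id_{V_\lambda}$ tensored with the rank $1$ Aomoto differential $d+(\lambda\eta)\cdot$, which is exactly the differential computing $\RR^k_r(A,\id_\C)$ at the class $\lambda\eta\in H^1(A)$. Hence $H^k(A\otimes V_\lambda,d_\omega)\cong H^k(A,d_{\lambda\eta})^{\oplus\dim V_\lambda}$, which is nonzero precisely when $\lambda\eta\in\RR^k_1(A)$. The remaining work is to show that the answer does not change when $b$ has nontrivial Jordan blocks on $V_\lambda$. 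For this I would filter $V_\lambda$ by the subspaces $F^j=\ker(b-\lambda)^j$, which are $b$-stable and whose successive quotients $F^j/F^{j-1}$ carry the scalar action of $\lambda$; tensoring with $A$ gives a finite filtration of the complex $A\otimes V_\lambda$ by subcomplexes, and the associated graded is a direct sum of copies of $(A,d_{\lambda\eta})$. By the standard spectral sequence of a filtered complex (or a short exact sequence induction), $H^k(A\otimes V_\lambda,d_\omega)$ vanishes for all $k$ whenever $H^\bullet(A,d_{\lambda\eta})=0$; conversely, if $\lambda\eta\in\RR^k_1(A)$ I would produce a nonzero class in $H^k(A\otimes V_\lambda,d_\omega)$ by taking a cocycle representative in the bottom piece $A\otimes F^1$ (where $b$ genuinely acts by $\lambda$) and checking it is not a boundary — a boundary from $A\otimes F^1$ would force the corresponding class in $H^k(A,d_{\lambda\eta})$ to vanish, while a boundary involving higher $F^j$ cannot land inside $A\otimes F^1$ by $(b-\lambda)$-weight considerations applied modulo $F^{j-1}$.

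The expected main obstacle is precisely this last point: the nonsemisimple part requires care because, although the associated-graded complex is a sum of rank $1$ Aomoto complexes, a class that is a nonboundary in the bottom graded piece could a priori become a boundary in the filtered complex. Controlling this means tracking the nilpotent operator $N=b-\lambda$ carefully — the key is that $N$ is strictly decreasing for the filtration, so the "leading term" of any boundary lives in a single graded piece and the obstruction is governed entirely by the rank $1$ Aomoto cohomology $H^\bullet(A,d_{\lambda\eta})$. Once this filtration argument is in place, the theorem follows by combining the direct sum decomposition over eigenvalues with the equivalence $H^k(A\otimes V_\lambda,d_\omega)\ne 0\iff\lambda\eta\in\RR^k_1(A)$.
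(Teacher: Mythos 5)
Your skeleton---splitting $V=\C^n$ into generalized eigenspaces of $b$, reducing to a single eigenvalue $\lambda$, and handling one direction via the filtration $F^j=\ker(b-\lambda)^j$---is the same as the paper's. The direction you get right is ``if $H^k(A,d_{\lambda\eta})=0$ for every eigenvalue $\lambda$, then $H^k(A\otimes V,d_\omega)=0$'': the $E^1$ page of the filtration spectral sequence in total degree $k$ is a direct sum of copies of $H^k(A,d_{\lambda\eta})$, so this works degree by degree (you only stated it under the stronger hypothesis that all of $H^{\bullet}(A,d_{\lambda\eta})$ vanishes, but the same argument gives the version actually needed).

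The gap is in the converse, and it is exactly the point the paper flags as subtle. You claim that a cocycle $\alpha\otimes e_1$ in the bottom filtration piece, representing a nonzero class of $H^k(A,d_{\lambda\eta})$, cannot be a $d_\omega$-boundary because ``a boundary involving higher $F^j$ cannot land inside $A\otimes F^1$ by $(b-\lambda)$-weight considerations.'' This is false. Since $d_\omega(\beta\otimes e_j)=d_{\lambda\eta}\beta\otimes e_j+\eta\beta\otimes e_{j-1}$, a combination $\sum_j\beta_j\otimes e_j$ with $d_{\lambda\eta}\beta_j+\eta\beta_{j+1}=0$ for $j\ge 2$ and $d_{\lambda\eta}\beta_n=0$ has its \emph{entire} $d_\omega$-image inside $A\otimes\C e_1$, equal to $(d_{\lambda\eta}\beta_1+\eta\beta_2)\otimes e_1$, and $\eta\beta_2$ need not be a $d_{\lambda\eta}$-boundary. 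Example \ref{ex:heis ss} of the paper is an explicit counterexample to your claim: for $A=\bigwedge(x,y,z)$ with $dz=xy$, $\eta=y$, $\lambda=0$ and a $3\times 3$ nilpotent Jordan block, one computes $d_\omega(z\otimes e_2+x\otimes e_3)=yz\otimes e_1$, while $[yz]\neq 0$ in $H^2(A,d)$. In spectral sequence terms your ``leading term'' argument asserts degeneration at $E^1$, whereas here $d^2\neq 0$ (see Remark \ref{rem:spec seq}, which states explicitly that this implication ``cannot be proved by a simple degeneration argument''). The theorem is still true because \emph{some} class in degree $k$ survives, but exhibiting one requires using the assumed vanishing of $H^k(A\otimes\C^n,d_\omega)$ globally: the paper first deduces from it that every $d_{\lambda\eta}$-cocycle $\alpha\in A^k$ satisfies $\eta\alpha+d_{\lambda\eta}\beta=0$ for some $\beta$, then feeds the resulting $d_\omega$-cocycle $\beta\otimes e_1+\alpha\otimes e_2$ back into the vanishing hypothesis to show the connecting homomorphism $\delta_k$ is zero, and only then concludes $H^k(A,d_{\lambda\eta})=0$. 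Your nonsemisimple case needs an argument of this kind; as written, it does not go through.
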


\begin{proof}
We start by putting the matrix $b$ in Jordan canonical form,
so that $\C^n=\bigoplus_i V_i$, where $V_i$ are the generalized
eigenspaces of $b$, and $b=\bigoplus_i b_i$, where $b_i$ are
the Jordan blocks.  Setting $\omega_i=\eta\otimes b_i$, the
Aomoto complex splits accordingly, and so does its cohomology,
\begin{equation}
\label{eq:hka}
H^k(A\otimes \C^n, d_{\omega}) = \bigoplus_i H^k(A\otimes V_i, d_{\omega_i}).
\end{equation}
Thus, it is enough to consider the case when $b$ is a single
Jordan block, of size $n\ge 2$, and with eigenvalue $\lambda$.

In the standard basis $e_1,\dots, e_n$ for $\C^n$, the
differential $d_{\omega}$ is given by
\begin{equation}
\label{eq:dw}
d_{\omega} \Big(\sum_{j=1}^n \alpha_j\otimes e_j \Big)=
\sum_{j<n}  (d\alpha_j + \lambda \eta \alpha_j + \eta\alpha_{j+1}) \otimes e_j
+ ( d\alpha_n + \lambda \eta \alpha_n ) \otimes e_n
\end{equation}

Let $\C^{n-1}$ be the subspace of $\C^n$ spanned by
$e_1,\dots, e_{n-1}$. Clearly, $A\otimes \C^{n-1}$ is a
subcomplex of $A\otimes \C^{n}$ with respect to
$d_{\omega}$. Moreover, the induced differential
on the quotient complex $A\otimes \C \cdot e_n$ may be
identified with the rank $1$ differential
$d_{\lambda \eta} \colon A\to A$ sending $\alpha\in A$
to $d\alpha + \lambda \eta \alpha$.

Using \eqref{eq:dw}, it is readily verified that the
connecting homomorphism in the associated
long exact sequence in cohomology,
$\delta_{\hdot} \colon H^{\hdot}(A\otimes \C, d_{\lambda \eta})
\to H^{\hdot\;+1}(A\otimes \C^{n-1}, d_{\omega})$, sends the class of
$\alpha\in A$ to the class of $\eta \alpha \otimes  e_{n-1}$.
It remains to establish the following equivalence
\begin{equation}
\label{eq:equiv}
H^k(A\otimes \C^n, d_{\omega}) =0 \same
H^k(A\otimes \C, d_{\lambda\eta}) =0.
\end{equation}

To prove the backwards implication, we may iterate
the above construction and use the associated long exact
sequences to infer that the natural map,
$H^k(A\otimes \C\cdot e_1, d_{\omega}) \to 
H^k(A\otimes \C^n, d_{\omega})$, 
is onto. On the other hand, it follows from \eqref{eq:dw} that
$(A\otimes \C\cdot e_1, d_{\omega})$ can be naturally identified
with $(A, d_{\lambda\eta})$, and the claim follows. 

To prove the forward implication, we first check
that, for any $d_{\lambda\eta}$-cocycle $\alpha\in A^k$,
there is a $\beta\in A^k$ such that $\eta\alpha + d_{\lambda\eta}(\beta)=0$.
Indeed, the class of the $d_{\omega}$-cocycle $\alpha\otimes e_1$
in $H^k(A\otimes \C^{n-1}, d_{\omega})$ lies in the image of
$\delta_{k-1}$, by our vanishing assumption. This means that
there is a $d_{\lambda\eta}$-cocycle $\beta'\in A^{k-1}$ such that
$\alpha\otimes e_1 = \eta \beta' \otimes e_{n-1} +
\sum_{j<n} d_{\omega} (\beta'_j \otimes e_j)$,
for some $\beta'_j\in A^{k-1}$.  By \eqref{eq:dw} and
graded-commutativity, it follows that $\eta \alpha\otimes e_1 =
\sum_{j<n} \eta d\beta'_j \otimes e_j$. Hence,
\begin{equation*}
\label{eq:ab1}
\eta \alpha=
\eta d\beta'_1=-d(\eta\beta'_1)=d_{\lambda\eta}(-\eta\beta'_1),
\end{equation*}
where the second equality comes from the (graded) Leibnitz rule.
Thus, we may take $\beta=\eta\beta'_1$.

Now consider the element $\beta\otimes e_1 +\alpha\otimes e_2\in
A^k \otimes \C^n$, where $\alpha$ is an arbitrary $d_{\lambda\eta}$-cocycle
in $A^k$, and $\beta$ is chosen so that $\eta\alpha + d_{\lambda\eta}(\beta)=0$.
Using again \eqref{eq:dw}, it is readily verified that
$d_{\omega}(\beta\otimes e_1 +\alpha\otimes e_2)=0$,
from which we infer that $\beta\otimes e_1 +\alpha\otimes e_2 =
d_{\omega} (\sum_{j=1}^{n} \beta_j \otimes e_j)$. Therefore,
$\alpha = d_{\lambda\eta}(\beta_2)+\eta\beta_3$, where
$\beta_3=0$ if $n=2$.  It follows that $\delta_k$ sends the
class of $\alpha$ in $H^k(A\otimes \C, d_{\lambda\eta})$ to $0$.
Hence, $\delta_k$ is the zero map, which implies that
$H^k(A\otimes \C, d_{\lambda\eta})=0$, since
$H^k(A\otimes \C^n, d_{\omega})=0$.
This completes the proof.
\end{proof}

\begin{remark}
\label{rem:spec seq}
As suggested by the referee, the above theorem can be interpreted 
in terms of the spectral sequence $(E^k,d^k)$ associated to the differential 
module $C= (A\otimes \C^n,d_{\omega})$, endowed with the finite, increasing 
filtration defined by $F_0=0$ and $F_s= A\otimes \spn \{e_1,\dots ,e_s\}$ 
for $1\le s\le n$ and differential $d_{\omega}$ of degree $1$.   
The degree $q$ part of the first page of this (convergent) spectral sequence has the form 
$\bigoplus_{\lambda}  \bigoplus_{n_\lambda}  H^q(A, d_{\lambda \eta})$, 
where $\lambda$ runs through the eigenvalues of the matrix $b$, and $n_{\lambda}$ 
is the multiplicity of $\lambda$.  Theorem \ref{thm:ess1res} can then be 
restated as saying:  
\begin{equation}
\label{eq:ss vanish}
H^q(C)=0 \same \text{ the degree $q$ part of $E^1$ vanishes. }
\end{equation}

The backwards implication is obvious, since 
$\gr_s(H^q(C))$ is isomorphic to the degree $q$ part of $E^{\infty}_s$, 
but the forward implication 
is quite subtle, and cannot be proved by a simple degeneration
argument.  Indeed, as the next example shows, the presence 
of non-trivial higher Massey products in the $\cdga$ $A$  may very well
yield non-trivial differentials in the $E^2$ page.  
\end{remark}

\begin{example}
\label{ex:heis ss}
Let $A=\big(\bigwedge (x,y,z); dx=dy=0, dz=xy\big)$ be the minimal model 
of the Heisen\-berg $3$-dimensional nilmanifold, and consider the
spectral sequence associated to the essentially rank $1$ flat connection 
$\omega = y \otimes b$, where   
$b=\left( \begin{smallmatrix} 0 & 1 & 0 \\ 
0 & 0 & 1 \\ 0 &  0 & 0 \end{smallmatrix}\right)$. 
We claim that the differential $d^2\colon E^2_3\to E^2_1$ is non-zero.

To prove the claim, consider the (degree $1$) class of  
$\alpha = z\otimes e_2 + x\otimes e_3 \in Z^2_3$ in $E^2_3$, 
and suppose that $d^2 ([\alpha]) =0\in E^2_1$. 
Our assumption implies that 
$d_{\omega} (\alpha) \in Z^1_0 + d_{\omega} Z^1_2= d_{\omega} Z^1_2$.
Hence, there is an element $\beta =\beta_1\otimes e_1 + \beta_2\otimes e_2$
with $d \beta_2=0$ such that $d_{\omega} (\alpha) = d_{\omega} (\beta)$. 
Equivalently, we have $yz = d\beta_1 +y \beta_2$, for
some $\beta_1 \in A^1$ and $\beta_2 \in A^1$ with $d \beta_2=0$.
It follows that $yz = d(c_1 z)+ y(c_2 x+ c_3y)$, for some constants $c_i\in \C$.
This leads to the equality $yz = (c_1-c_2)xy$ in $A^2$, a contradiction.
\end{example}

We now turn to the general case, in which $A$ is a connected $\cdga$,
$\g$ is an arbitrary Lie algebra, and $\theta\colon \g\to \gl (V)$ is a 
finite-dimensional representation.

\begin{corollary}
\label{cor:vdet}
Let $\omega=\eta\otimes g$ be an arbitrary element of
$\F^1(A, \g)$.  Then $\omega$ belongs to $\RR^k_1(A,\theta)$
if and only if there is an eigenvalue $\lambda$ of $\theta(g)$ such that
$\lambda \eta$ belongs to $\RR^k_1(A)$. Moreover,
\begin{equation}
\label{eq:pi res}
\Pi(A,\theta)\subseteq \bigcap_{k: H^k(A)\ne 0} \RR^k_1(A, \theta).
\end{equation}
\end{corollary}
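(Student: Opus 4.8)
The corollary is a formal consequence of Theorem~\ref{thm:ess1res} together with the functoriality of resonance under Lie algebra morphisms recorded in \eqref{eq:univ}. First I would reduce the general statement to the ``universal'' case already handled. Given $\omega=\eta\otimes g\in\F^1(A,\g)$, apply the representation $\theta\colon\g\to\gl(V)$ itself as the morphism $f$ in \S\ref{subsec:funct}; by Remark~\ref{rem:univ}, with $\bar\theta=\id_{A^1}\otimes\theta$, we have $\RR^k_1(A,\theta)=\bar\theta^{-1}\bigl(\RR^k_1(A,\id_{\gl(V)})\bigr)$, and $\bar\theta(\omega)=\eta\otimes\theta(g)\in\F^1(A,\gl(V))$. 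Thus $\omega\in\RR^k_1(A,\theta)$ if and only if $\eta\otimes\theta(g)\in\RR^k_1(A,\id_{\gl(V)})$. Now set $n=\dim V$, so $\gl(V)\cong\gl_n$, and the matrix $b:=\theta(g)$ is a bona fide element of $\gl_n$.

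\textbf{Invoking the universal case.} By Theorem~\ref{thm:ess1res} applied to $\omega'=\eta\otimes b\in\F^1(A,\gl_n)$ with $\theta=\id_{\gl_n}$, the connection $\eta\otimes\theta(g)$ lies in $\RR^k_1(A,\id_{\gl(V)})$ precisely when there is an eigenvalue $\lambda$ of $b=\theta(g)$ with $\lambda\eta\in\RR^k_1(A)$. Chaining this with the equivalence from the previous paragraph gives the first assertion of the corollary verbatim. One small point to note (and I would spell it out): Theorem~\ref{thm:ess1res} is stated for a single Jordan-block reduction and an $n\ge 2$ running hypothesis, but the $n=1$ case is trivial ($\RR^k_1(A,\id_{\gl_1})=\RR^k_1(A)$ and the only eigenvalue of $b$ is $b$ itself), so no genuine gap arises here.

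\textbf{The inclusion \eqref{eq:pi res}.} Let $\omega=\eta\otimes g\in\Pi(A,\theta)$, so by definition $\det(\theta(g))=0$, i.e.\ $g\in V(\det\circ\theta)$. Then $0$ is an eigenvalue of the matrix $\theta(g)$. Choose $\lambda=0$ in the criterion just established: we have $\lambda\eta=0$, and $0\in\RR^k_1(A)$ is equivalent, by \eqref{eq:basept}, to $H^k(A)\ne 0$. Hence for every degree $k$ with $H^k(A)\ne 0$ the connection $\omega$ satisfies the criterion and therefore belongs to $\RR^k_1(A,\theta)$; taking the intersection over all such $k$ yields $\Pi(A,\theta)\subseteq\bigcap_{k:\,H^k(A)\ne 0}\RR^k_1(A,\theta)$, as desired.

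\textbf{Main obstacle.} There is essentially none of substance: the entire content has been front-loaded into Theorem~\ref{thm:ess1res}, whose forward implication in \eqref{eq:equiv} is the delicate part. The only thing requiring a little care in writing up the corollary is the bookkeeping around identifying $\gl(V)$ with $\gl_n$ and making sure the eigenvalues of $\theta(g)$ as an abstract operator match those of the matrix $b$ used in the theorem (they do, since the Jordan form is basis-independent), and checking the degenerate low-dimensional cases $n=1$ and $V=0$ (the latter excluded by the standing assumption $V\ne 0$). I would keep the proof to three or four lines.
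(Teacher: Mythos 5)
Your proposal is correct and follows essentially the same route as the paper: reduce to the universal case $\RR^k_1(A,\id_{\gl(V)})$ via Remark~\ref{rem:univ}, invoke Theorem~\ref{thm:ess1res}, and for \eqref{eq:pi res} use the eigenvalue $\lambda=0$ together with \eqref{eq:basept}. Your side remark about an ``$n\ge 2$ running hypothesis'' is unnecessary---that restriction appears only in the Jordan-block reduction inside the proof of Theorem~\ref{thm:ess1res}, not in its statement---but this does not affect the argument.
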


\begin{proof}
In view of Remark \ref{rem:univ}, the element $\omega$ belongs to
$\RR^k_1(A,\theta)$ if and only if $\eta \otimes \theta(g)$ belongs
to $\RR^k_1(A,\id_{\gl(V)})$. Hence, the first claim follows from
Theorem \ref{thm:ess1res}.

To prove the second claim, start by noting that $\theta(g)$ has the
eigenvalue $\lambda=0$. By \eqref{eq:basept},  then,
$\lambda \eta\in \RR^k_1(A)$ if and only if $H^k(A)\ne 0$,
and this completes the proof.
\end{proof}

\subsection{Some representation theory}
\label{subset:rep theory}
Next, we analyze the sets $V(\det \circ \theta)$ in the case when
$\theta\colon \g\to \gl(V)$ is the structure map of a (non-zero)
finite-dimensional module over $\g=\sl_2$.  We refer the
reader to \cite{FH, Hu} for the necessary ingredients
from classical representation theory.

Let $\{H, X, Y\}$ be the standard basis of $\sl_2$,
and let $\cart=\C\cdot H$ be the standard Cartan subalgebra.
The representation $V$ decomposes as
a direct sum of irreducible representations of the form $V(n)$,
with $n\ge 0$, where $\dim V(n)=n+1$, and the eigenvalues of
$H$ on $V(n)$ are $n, n-2,\dots , -n$. We denote the structure
map of $V(n)$ by $\theta_n$.  The defining representation of $\sl_2$
is $V(1)$, while $\theta_1$ is the inclusion $\sl_2\hookrightarrow \gl_2$.
Finally, we denote the map $\det \circ\theta_1\colon \sl_2\to \C$
simply by $\det$.

\begin{lemma}
\label{lem:det}
If $V$ has a direct summand equal to $V(n)$, with $n$ even,
then $V(\det\circ\theta)=\sl_2$. Otherwise,
$V(\det\circ\theta)=V(\det)$.
\end{lemma}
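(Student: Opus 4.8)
The plan is to reduce everything to computing, for an arbitrary element $g\in\sl_2$, the determinant of $\theta(g)$ acting on $V$, and then to use the direct sum decomposition $V=\bigoplus V(n)$ together with the basic fact that $V(\det\circ\theta)=\{g\in\sl_2\mid \det(\theta(g))=0\}$ is cut out by the polynomial $g\mapsto \prod_n \det(\theta_n(g))$ (with multiplicities), so that $V(\det\circ\theta)=\bigcup_n V(\det\circ\theta_n)$ as a set. Thus it suffices to analyze a single irreducible summand $V(n)$ and decide when $V(\det\circ\theta_n)=\sl_2$ (i.e.\ $\det(\theta_n(g))\equiv 0$) versus $V(\det\circ\theta_n)=V(\det)$.

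First I would treat the generic semisimple element. Any $g\in\sl_2$ that is not nilpotent is conjugate (under the adjoint action of $\SL_2$, which does not change the determinant of $\theta_n(g)$) to a nonzero multiple $tH$ of the Cartan generator $H$. On $V(n)$ the element $H$ acts diagonally with eigenvalues $n,n-2,\dots,-n$, so $\theta_n(tH)$ acts with eigenvalues $tn,t(n-2),\dots,-tn$, and $\det(\theta_n(tH))=t^{n+1}\prod_{j}(n-2j)$ where $j$ runs from $0$ to $n$. This product of eigenvalues of $H$ is zero exactly when $0$ occurs among $n,n-2,\dots,-n$, i.e.\ exactly when $n$ is even. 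Hence for $n$ even, $\det(\theta_n(g))=0$ for every semisimple $g$; since the semisimple elements are Zariski-dense in $\sl_2$ and $g\mapsto\det(\theta_n(g))$ is polynomial, it vanishes identically, so $V(\det\circ\theta_n)=\sl_2$. For $n$ odd, $\det(\theta_n(tH))\neq 0$ for $t\neq 0$, so $\det\circ\theta_n$ is not identically zero.

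It remains to identify $V(\det\circ\theta_n)$ precisely when $n$ is odd. Here I would compare with the case $n=1$: $\det=\det\circ\theta_1$ is (up to scalar) the quadratic form $\det\begin{pmatrix} a & b\\ c & -a\end{pmatrix}=-a^2-bc$ on $\sl_2$, whose zero set $V(\det)$ is exactly the nilpotent cone $\NN$ of $\sl_2$. For general odd $n$, a nilpotent $g$ has $\theta_n(g)$ nilpotent (as $\theta_n$ is a Lie algebra map, $\theta_n$ of an ad-nilpotent element is nilpotent), hence $\det(\theta_n(g))=0$; so $\NN=V(\det)\subseteq V(\det\circ\theta_n)$. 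Conversely, if $g\notin\NN$ then $g$ is semisimple and nonzero, so by the computation above $\det(\theta_n(g))\neq 0$; thus $V(\det\circ\theta_n)\subseteq \NN=V(\det)$. Combining the two inclusions gives $V(\det\circ\theta_n)=V(\det)$ for $n$ odd. Putting the summands together: $V(\det\circ\theta)=\bigcup_n V(\det\circ\theta_n)$ equals $\sl_2$ if some summand $V(n)$ with $n$ even occurs, and equals $V(\det)$ otherwise (note that $V\ne 0$, so at least one summand occurs, and $V(\det)\subseteq\sl_2$ always).

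The only mildly delicate point — and the one I would be careful to state cleanly rather than the main obstacle — is the reduction ``$V(\det\circ\theta)=\bigcup_n V(\det\circ\theta_n)$'': it uses that the matrix of $\theta(g)$ in a basis adapted to the decomposition $V=\bigoplus V(n)$ is block diagonal, so its determinant is the product of the block determinants, and a product of polynomials vanishes at a point iff one factor does. Everything else is the standard weight computation for $\sl_2$-representations together with the identification of $V(\det)$ with the nilpotent cone, both of which may be quoted from \cite{FH, Hu}. I do not anticipate a serious obstacle; the argument is essentially a bookkeeping of $\sl_2$-weights plus the density of semisimple elements.
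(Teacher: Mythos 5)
Your argument is correct, and it reaches the same two intermediate facts the paper needs (reduction to irreducible summands $V(n)$, and the dichotomy even/odd for $\det\circ\theta_n$), but by a genuinely more elementary route. The paper's proof is invariant-theoretic: it first shows that $\det\circ\theta$ lies in $\C[\sl_2^*]^{\SL_2}$, then computes this invariant ring to be $\C[\det]$ via the restriction map to $\C[\cart^*]^W$, and finally uses the degree count $\deg(\det\circ\theta_n)=n+1$ together with parity to conclude that $\det\circ\theta_n$ is $0$ for $n$ even and a \emph{nonzero scalar multiple of the polynomial} $\det^{(n+1)/2}$ for $n$ odd. You never identify the polynomial itself; you only identify its zero locus, which is all the lemma requires: for $n$ even you get identical vanishing from the eigenvalue computation on $tH$ plus Zariski density of the semisimple elements, and for $n$ odd you show the zero locus is exactly the nilpotent cone by checking nilpotents and non-nilpotents separately (using that in $\sl_2$ every non-nilpotent element is regular semisimple, hence conjugate into the Cartan). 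Both proofs rest on the same two computations --- the $H$-weights $n,n-2,\dots,-n$ on $V(n)$ and the conjugacy of non-nilpotent elements into $\cart$ --- so the difference is one of packaging: the paper's version yields the stronger polynomial identity $\det\circ\theta_n=\lambda_n\det^{(n+1)/2}$ (and the structure of the invariant ring) as a byproduct, while yours avoids any appeal to $\C[\sl_2^*]^{\SL_2}=\C[\det]$ at the cost of proving only the set-theoretic statement. Your reduction $V(\det\circ\theta)=\bigcup_n V(\det\circ\theta_n)$ via block-diagonality is the same (implicit) first step as in the paper, and your justification that $\theta_n$ sends nilpotents to nilpotent operators is standard and correctly sourced. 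No gaps.
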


\begin{proof}
It is enough to check that $\det\circ\theta_n$ is equal to $0$
if $n$ is even, and is a non-zero multiple of $\det^{(n+1)/2}$
if $n$ is odd.  In order to verify this assertion, we shall use some
basic invariant theory.

Let $\iota\colon \SL_2 \to \GL(V)$ be the (unique) rational representation
for which $d_1(\iota) =\theta$.   We first claim that
$\det\circ\theta$ belongs to $\C[\sl_2^*]^{\SL_2}$,
the invariant subalgebra of the polynomial algebra on the
dual vector space to $\sl_2$, taken with respect to the
adjoint representation of $\SL_2$.  Indeed, note that
$\iota \circ c(s) = c(\iota (s)) \circ \iota$, for any $s\in \SL_2$,
where $c(-)$ stands for the conjugation action.
The claim then follows upon taking differential at $1$.

Next, let $W=\Z_2$ be the Weyl group of $\SL_2$, acting on
$\cart$ by the alternating representation, and consider
the natural morphism $r\colon \C[\sl_2^*]^{\SL_2} \to \C[\cart^*]^W$
between the respective subalgebras of invariants. Clearly,
$r$ is surjective, since
$\C[\cart^*]^W=\C[(H^{*})^2]$ and $r(\det)= -(H^{*})^2$.
Furthermore, $r$ is also injective, since every element of $\sl_2$
outside of $V(\det)$ is $\SL_2$-conjugate to an element of $\cart$.
Thus, $\C[\sl_2^*]^{\SL_2} = \C[\det]$.

Finally, assume $V=V(n)$. Since $\deg (\det\circ \theta_n) =n+1$,
we must have $\det\circ \theta_n=0$ if $n$ is even, and
$\det\circ \theta_n=\lambda_n \det^{(n+1)/2}$ if $n$ is odd.
But $\det(\theta_n(H))\ne 0$, and so $\lambda_n\ne 0$, thus
completing the proof.
\end{proof}

\section{Holonomy Lie algebras of differential graded algebras}
\label{sect:holo lie}

Our primary aim in this section is to extend the definition
of holonomy Lie algebras to arbitrary \cdga's, and to
extend several results from \cite{DPS}, thereby relating
flat connections to Lie algebra representations and
resonance varieties of \cdga's to resonance varieties
of Lie algebras. As a first application, we compute the
depth-$1$ resonance varieties of finite-dimensional,
nilpotent Lie algebras, in two important non-abelian
cases.

\subsection{Holonomy Lie algebra}
\label{subsec:holo}

Let $A^{\hdot}$ be a connected \cdga.  For our purposes here,
we will be mainly interested in the differential $d\colon A^1\to A^2$
and the product $\cup\colon A^1\wedge A^1\to A^2$. Our
starting point is the following lemma, whose proof is
straightforward.

\begin{lemma}
\label{lem:cores}
For any representation $\theta\colon \g \to \gl(V)$, both the
parameter space $\F(A,\g)$ and the resonance varieties
$\RR^0_r(A,\theta)$ and $\RR^1_r(A,\theta)$ depend only
on the co-restriction of $d$ and $\cup$ to the subspace
$\im(d)+\im (\cup)\subseteq A^2$.
\end{lemma}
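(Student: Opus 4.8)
The plan is to unwind all three objects --- the parameter space $\F(A,\g)$, and the two resonance varieties $\RR^0_r(A,\theta)$ and $\RR^1_r(A,\theta)$ --- directly from their definitions, and observe that every formula that enters only involves $d\colon A^1\to A^2$, the product $\cup\colon A^1\wedge A^1\to A^2$, and the representation $\theta$, never the full algebra $A^2$ nor any higher-degree data. Concretely, set $B = \im(d) + \im(\cup) \subseteq A^2$, and let $q\colon A^2 \surj B$ be the co-restriction; write $\bar d = q\circ d$ and $\bar\cup = q\circ\cup$. What has to be checked is that replacing $(d,\cup)$ by $(\bar d,\bar\cup)$ changes none of the three varieties. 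Note first that $\F(A,\g)$ is cut out inside $A^1\otimes\g$ by the single equation \eqref{eq:flat coords}, $\sum_i d\eta_i\otimes g_i + \sum_{i<j}\eta_i\eta_j\otimes[g_i,g_j]=0$, which is an equation in $A^2\otimes\g$; since the left-hand side lies in $B\otimes\g$ already (both summands do), vanishing in $A^2\otimes\g$ is equivalent to vanishing in $B\otimes\g$. Hence $\F(A,\g)$ is unchanged when $(d,\cup)$ is replaced by $(\bar d,\bar\cup)$.

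Next I would treat the resonance varieties. For $\RR^0_r(A,\theta)$, Lemma~\ref{lem:aom0} identifies $H^0(A\otimes V,d_\omega)$ with $\bigcap_i\ker(\theta(g_i))$, which manifestly depends only on $\theta$ and the coordinates $g_i$ of $\omega$, not on $d$ or $\cup$ at all; so $\RR^0_r(A,\theta)$ depends only on $\F(A,\g)$ (already shown to depend only on $\bar d,\bar\cup$) and $\theta$. For $\RR^1_r(A,\theta)$ one must look at the truncation of the Aomoto complex in degrees $0,1,2$:
\[
A^0\otimes V \xrightarrow{\,d_\omega\,} A^1\otimes V \xrightarrow{\,d_\omega\,} A^2\otimes V .
\]
By \eqref{eq:adw}, the first map sends $1\otimes v$ to $\sum_i\eta_i\otimes\theta(g_i)v$ --- involving only the $g_i$ and $\theta$ --- and the second map sends $\alpha\otimes v$ (with $\alpha\in A^1$) to $d\alpha\otimes v + \sum_i\eta_i\alpha\otimes\theta(g_i)v$, which lands in $B\otimes V$. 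The dimension $\dim H^1(A\otimes V,d_\omega) = \dim\ker(d_\omega|_{A^1\otimes V}) - \operatorname{rank}(d_\omega|_{A^0\otimes V})$ therefore depends only on: the rank of the first differential (depends on $\theta$, $g_i$) and the kernel of the second differential on $A^1\otimes V$. But that kernel is unchanged if we post-compose the target with the isomorphism-onto-image $q\otimes\id_V\colon B\otimes V \to B\otimes V$, i.e.\ if we replace $d\alpha\otimes v + \sum_i\eta_i\alpha\otimes\theta(g_i)v$ by $\bar d\alpha\otimes v + \sum_i(\bar\cup(\eta_i,\alpha))\otimes\theta(g_i)v$. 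Hence $\dim H^1$ is the same for $(A,d,\cup)$ and for the co-restricted data, uniformly in $\omega\in\F(A,\g)$, which gives $\RR^1_r(A,\theta) = \RR^1_r(A',\theta)$ where $A'$ is the \cdga\ with the co-restricted structure maps.

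The one point that needs a little care --- and the only place I expect mild friction --- is making precise the sense in which ``$A$ with $(d,\cup)$ replaced by $(\bar d,\bar\cup)$'' is again a \cdga\ one can legitimately plug into the definitions: one should say that the construction of $\F$, $\RR^0$, $\RR^1$ factors through the quotient datum $(A^0, A^1, B, \bar d, \bar\cup)$, rather than pretend $B$ sits inside a genuine \cdga. Since $\F(A,\g)$, $\RR^0_1$ and $\RR^1_1$ only ever reference $A^{\le 2}$ and the two structure maps into $A^2$ (never the algebra structure on $A^2$ or products $A^2\cdot A^{\ge 0}$), this factorization is automatic once the display computations above are in place; the lemma follows. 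I would phrase the final write-up as: ``each of the three objects is computed from a diagram involving only $A^0$, $A^1$, $d|_{A^1}$, $\cup|_{A^1\wedge A^1}$ and $\theta$, and replacing the target $A^2$ of the last two maps by $\im(d)+\im(\cup)$ via the surjection $q$ leaves all ranks and kernels unchanged, because $q$ is an isomorphism on the relevant image.''
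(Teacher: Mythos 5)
Your proposal is correct and is exactly the definitional unwinding the paper has in mind (the paper omits the proof, declaring it straightforward): the Maurer--Cartan equation and the degree-$1$ Aomoto differential take values in $\im(d)+\im(\cup)$, so all kernels and ranks entering $\F(A,\g)$, $\RR^0_r(A,\theta)$, $\RR^1_r(A,\theta)$ are determined by the co-restricted maps. One cosmetic remark: there is no canonical surjection $q\colon A^2\to\im(d)+\im(\cup)$, but your argument never actually needs one --- it only uses that the relevant elements already lie in that subspace, which is the right way to phrase it.
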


This leads us to the following construction. Given a $\C$-vector
space $W$, denote by $\L^{\hdot}(W)$ the free Lie algebra on $W$,
graded by bracket length. In low degrees, $\L^{1}(W)=W$, 
while $\L^{2}(W)$ may be identified with $W\wedge W$ via
$[u,v] \leftrightarrow u\wedge v$.

Now let $A$ be a $1$-finite \cdga.  Set $A_i=(A^i)^*$, and let
$\L(A_1)$ be the free Lie algebra on the dual vector space $A_1$.
We then have dual maps, $d^*\colon A_2\to A_1=\L^1(A_1)$
and $\cup^*\colon A_2\to A_1\wedge A_1=\L^2(A_1)$.

\begin{definition}
\label{def:hold}
The {\em holonomy Lie algebra}\/ of a $1$-finite $\cdga$ $A$
is the finitely presented Lie algebra
$\h(A) = \L(A_1) / \ideal(\im \partial_A)$,
where $\partial_A$ is the linear map
\begin{equation}
\label{eq:partial}
\partial_A:=d^*+\cup^* \colon A_2 \to
\L^1(A_1) \oplus \L^2(A_1)\subset \L(A_1).
\end{equation}
\end{definition}

This construction is functorial.  Indeed, if $\varphi\colon A\to A'$
is a $\cdga$ map, then the linear map $\varphi_1=(\varphi^1)^*\colon A'_1\to A_1$
extends to Lie algebra map $\L(\varphi_1)\colon \L(A'_1)\to \L(A_1)$,
which in turn induces a Lie algebra map $\h(\varphi)\colon \h(A')\to \h(A)$.

\begin{remark}
\label{rem:holo chen}
In the case when $d=0$, the above definition recovers the classical
definition of the holonomy Lie algebra of an algebra, due to
K.~T.~Chen \cite{Ch}.  In this situation, $\h(A)$ inherits a natural grading
from the free Lie algebra, compatible with the Lie bracket; thus,
$\h(A)$ is a finitely presented graded Lie algebra,
with generators in degree $1$, and relations in degree $2$.
In general, though, the ideal generated by $\im(\partial_A)$
is not homogeneous, and the Lie algebra $\h(A)$ is not graded.
\end{remark}

\begin{example}
\label{ex:holc}
Let $\f$ be a finite-dimensional Lie algebra, and let $A^{\hdot}=
\CC^{\hdot}(\f)$ be its cochain \cdga, as in Example \ref{ex:nilp}.
Then $\h(A)$ is the quotient of $\L(\f)$ by all the relations of
the form $[x,y]_{\f} = [x,y]_{\L(\f)}$, with $x,y\in \f$. Hence,
$\h(\CC^{\hdot}(\f))=\f$, as (ungraded) Lie algebras.
\end{example}

\subsection{Flat connections and representations}
\label{subset:flatrep}
We now relate the set of flat connections
on a $\cdga$ with the set of Lie algebra representations
of the corresponding holonomy Lie algebra.

To this end, let us start with an arbitrary finitely-generated Lie
algebra $\h$, presented as the quotient $\L(A_1)/\ideal(\im \partial)$
of the free Lie algebra on a finite-dimensional vector space $A_1$
by the ideal generated by the image of a linear map,
$\partial\colon A_2\to \L(A_1)$.

Now let $\g$ be a  Lie algebra, and let
$\Hom_{\Lie} (\h, \g)$ be the set of Lie algebra morphisms from
$\h$ to $\g$.  Every such morphism defines by
restriction a linear map from $A_1$ to $\g$. Thus, we have
a canonical inclusion,
\begin{equation}
\label{eq:rep lie}
\Hom_{\Lie} (\h, \g)\subseteq \Hom(A_1,\g).
\end{equation}

We let $\Hom_{\Lie}^1 (\h, \g)$ denote the intersection of 
$\Hom_{\Lie} (\h, \g)$ with $\Hom^1 (A_1, \g)$.
When $\g$ is finite-dimensional, it is readily checked that 
$\Hom_{\Lie} (\h, \g)$ is a Zariski-closed
subset of the affine space $\Hom(A_1,\g)$.  This endows the
{\em representation variety}\/ $\Hom_{\Lie} (\h, \g)$ with an affine
structure, natural in both $\h$ and $\g$. 

The next result extends Lemma 3.13 from \cite{DPS}, where the
case in which $A$ has zero differential was analyzed.

\begin{prop}
\label{prop:flathol}
Let $A$ be a $1$-finite $\cdga$, and let $\g$ be a Lie algebra.
Then, the canonical isomorphism
$\psi\colon A^1\otimes \g \isom  \Hom (A_1,\g)$
restricts to an identification
\begin{equation}
\label{eq:psi1}
\xymatrix{\psi\colon \F(A,\g) \ar^(.43){\simeq}[r]&  \Hom_{\Lie} (\h(A), \g).}
\end{equation}
Moreover,  $\psi$ further restricts to an identification
\begin{equation}
\label{eq:psi2}
\xymatrix{\psi\colon \F^1(A,\g) \ar^(.43){\simeq}[r]&  \Hom^1_{\Lie} (\h(A), \g).}
\end{equation}
\end{prop}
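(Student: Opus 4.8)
The plan is to unwind both sides of \eqref{eq:psi1} in terms of a fixed basis and show that the Maurer--Cartan equation for $\omega$ is literally the defining relation set for a Lie algebra morphism out of $\h(A)$. Pick a basis $\{\eta_i\}$ of $A^1$, with dual basis $\{\eta_i^*\}$ of $A_1$; then $\psi$ sends a connection $\omega=\sum_i \eta_i\otimes g_i$ to the linear map $\widetilde\omega\colon A_1\to\g$ with $\widetilde\omega(\eta_i^*)=g_i$. Since $\h(A)=\L(A_1)/\ideal(\im\partial_A)$ is generated in degree $1$ by $A_1$, the map $\widetilde\omega$ extends to a Lie algebra map $\L(A_1)\to\g$, and this factors through $\h(A)$ precisely when $\widetilde\omega$ (extended to brackets) kills $\im\partial_A$, i.e.\ when $\widetilde\omega(\partial_A\xi)=0$ in $\g$ for all $\xi\in A_2$. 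So the whole content is the identity
\[
\widetilde\omega\bigl(\partial_A\xi\bigr)
= \widetilde\omega\bigl(d^*\xi + \cup^*\xi\bigr)
= \langle \text{(flatness defect of }\omega\text{)},\, \xi\rangle ,
\]
which I would verify by evaluating both sides against $\xi=\eta_i^*\wedge\eta_j^*$ (or the appropriate dual basis of $A_2$). Concretely, $\cup^*$ records the structure constants writing $\eta_i\eta_j\in A^2$ in a basis, and $d^*$ records the structure constants writing $d\eta_i\in A^2$ in a basis; pairing $\widetilde\omega$ against these reproduces exactly the coefficients $\sum_i d\eta_i\otimes g_i + \sum_{i<j}\eta_i\eta_j\otimes[g_i,g_j]$ from \eqref{eq:flat coords}, term by term. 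Hence $\omega$ satisfies Maurer--Cartan iff $\widetilde\omega$ annihilates $\im\partial_A$ iff $\widetilde\omega$ descends to $\h(A)$. This gives the set-theoretic bijection; naturality and the fact that it is an isomorphism of affine varieties are routine, following from functoriality of $\h(-)$ and $\psi$ and the fact that $\psi$ is a linear isomorphism (as $A^1$ is finite-dimensional).

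For \eqref{eq:psi2} the argument is almost formal once \eqref{eq:psi1} is in hand. By Definition~\ref{def:flat1}, $\F^1(A,\g)=P(H^1(A)\times\g)$ consists of the tensors $\eta\otimes g$; under the linear isomorphism $\psi\colon A^1\otimes\g\isom\Hom(A_1,\g)$, a pure tensor $\eta\otimes g$ is sent to the linear map $u^*\mapsto\langle\eta,u^*\rangle g$, whose image is contained in the line $\C g$, hence lies in $\Hom^1(A_1,\g)$; conversely any element of $\Hom^1(A_1,\g)$ with at most $1$-dimensional image is $\psi$ of such a pure tensor. So $\psi$ carries $A^1\otimes\g\cap(\text{pure tensors})$ onto $\Hom^1(A_1,\g)$, and intersecting with the two sides of \eqref{eq:psi1} gives $\psi(\F^1(A,\g))=\Hom_{\Lie}(\h(A),\g)\cap\Hom^1(A_1,\g)=\Hom^1_{\Lie}(\h(A),\g)$, which is the assertion. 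One small point to address: $\F^1(A,\g)$ uses the cocycle condition $\eta\in Z^1(A)$, but since $A$ is connected $Z^1(A)=H^1(A)$ and, by Lemma~\ref{lem:pmap}, every pure tensor $\eta\otimes g$ with $d\eta=0$ is automatically a flat connection, so this is consistent with the description above and nothing extra needs checking.

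The main obstacle, such as it is, is bookkeeping rather than conceptual: one must be careful that the two quadratic terms in $\partial_A$, namely the ``linear'' piece $d^*$ landing in $\L^1(A_1)$ and the ``quadratic'' piece $\cup^*$ landing in $\L^2(A_1)\cong A_1\wedge A_1$, pair correctly under $\widetilde\omega$ with, respectively, the $\sum_i d\eta_i\otimes g_i$ term and the $\sum_{i<j}\eta_i\eta_j\otimes[g_i,g_j]$ term of the flatness condition — in particular that the identification $[u,v]\leftrightarrow u\wedge v$ in $\L^2(A_1)$ matches the sign conventions in \eqref{eq:flat coords} and in the definition of $\cup^*$, and that a Lie algebra map extended from $A_1$ sends $[\eta_i^*,\eta_j^*]\mapsto[g_i,g_j]$. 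Once the dual-basis pairing is set up cleanly this is a one-line computation, and the rest of the proof is formal.
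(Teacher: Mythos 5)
Your proposal is correct and follows essentially the same route as the paper's proof: fix a basis of $A^1$, observe that the Maurer--Cartan equation for $\omega=\sum_i\eta_i\otimes g_i$, paired against elements of $A_2$, coincides term by term with the vanishing of the extension of $\psi(\omega)$ on $\im\partial_A=\im(d^*+\cup^*)$, and then note that pure tensors correspond exactly to maps in $\Hom^1(A_1,\g)$. The bookkeeping you flag (matching $d^*$ with the $\sum_i d\eta_i\otimes g_i$ term and $\cup^*$ with the bracket term) is precisely what the paper's displayed computation carries out.
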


\begin{proof}
To prove the first claim, we need to show the following: an
element $\omega\in A^1 \otimes \g$ satisfies the Maurer--Cartan
equation \eqref{eq:flat} if and only if the extension of the map
$\rho=\psi(\omega)$ to the free Lie algebra, $\rho\colon \L(A_1) \to \g$,
vanishes on $\partial_A(a)$, for all $a\in A_2$.

To that end, pick a basis $\{\eta_i\}$ for $A^1$ and denote by $\{\eta^*_i\}$
the dual basis for $A_1$. Writing $\omega=\sum_i \eta_i \otimes g_i$,
we have that $\rho(\eta^*_i)=g_i$.  In the chosen bases, the linear
maps $d^*\colon A_2\to A_1$ and $\cup^*\colon A_2\to A_1\wedge A_1$
may be written as $d^*(a)= \sum_i d_i \eta^*_i$ and
$\cup^*(a)= \sum_{i<j} \mu_{ij} \eta^*_i \wedge \eta^*_j$.
Recalling that $\partial_A=d^*+\cup^*$, we find that
\begin{equation}
\label{eq:lierels}
\rho \partial_A  (a) =  \sum_i d_i g_i  + \sum_{i<j} \mu_{ij} [g_i,g_j].
\end{equation}

In the same bases, the Maurer--Cartan equation for $\omega$ can be
written as \eqref{eq:flat coords}.  Applying the (injective) linear map $\psi$
to both sides of this equation, we find that \eqref{eq:flat coords} is
equivalent to
\begin{equation}
\label{eq:mc}
\sum_i \langle d\eta_i,  a\rangle g_i  +
\sum_{i<j} \langle \cup(\eta_i \wedge \eta_j),  a\rangle   [g_i,g_j]=0,
\end{equation}
for all $a\in A_2$.  Clearly, the right side of \eqref{eq:lierels}
equals the left side of \eqref{eq:mc}, and so the first claim is proved.

As for the second claim, we need to show that $\omega$ belongs to
$\F^1(A,\g)$, that is, $\omega$ can be written as $\eta\otimes g$, 
for some $\eta\in A^1$ and $g\in \g$, if and only if $\rho=\psi(\omega)$ 
belongs to $\Hom^1(A_1,\g)$, that is, $\im(\rho)$ has dimension at 
most $1$.  But this is clear, and we are done.
\end{proof}

\subsection{Lie algebra cohomology}
\label{subsec:lie coho}
Next, we relate the cohomology in low degrees of the holonomy
Lie algebra to the cohomology of the corresponding Aomoto complex.
For basic facts about cohomology of Lie algebras, we refer
to the book by Hilton and Stammbach \cite{HS}.

We start with an arbitrary Lie algebra $\h$, presented as
$\h=\L(A_1)/\ideal(\im \partial\colon A_2\to \L(A_1))$, and
with an arbitrary $\h$-module $V=V_{\rho}$, with structure
map $\rho\colon \h\to \gl(V)$.  Let $\Der(\h, V)$
be the vector space consisting of all linear maps
$\delta\colon \h\to V$ satisfying
$\delta([g,h])=\rho(g)\cdot \delta(h)-\rho(h)\cdot \delta(g)$.
Let $\pi\colon \L(A_1)\to \h$ be the canonical projection;
restricting to the free generators identifies
$\Der(\L(A_1), V_{\rho\pi})$ with $\Hom(A_1,V)$.

Let $d^1_{\rho}\colon \Hom(A_1,V)\to  \Hom(A_2,V)$ be the
linear map sending $\delta$ to $\delta\circ \partial$;
the kernel of this map can be identified with the space
$\Der(\h,V)$. Set $A_0=\C$, and denote by
$d^0_{\rho}\colon \Hom(A_0,V)\to  \Hom(A_1,V)$ the
linear map sending $v\in V$ to the inner derivation
$\delta_v$, defined by $\delta_v(h)=\rho h(v)$, for $h\in \h$.
Then, the cochain complex
\begin{equation}
\label{eq:hlie}
\xymatrixcolsep{22pt}
\xymatrix{0\ar[r]& \Hom(A_0,V) \ar^{d^0_{\rho}}[r]
& \Hom(A_1,V) \ar^{d^1_{\rho}}[r]
& \Hom(A_2,V) \ar[r]& 0 \ar[r]& \cdots}
\end{equation}
computes the cohomology groups $H^i(\h,V)$, for $i=0$ and $1$.

Now let $A$ be a $1$-finite \cdga, and let $\h=\h(A)$ be its
holonomy Lie algebra. Consider the Lie algebra $\g=\gl(V)$,
endowed with the representation $\theta=\id_{\gl(V)}$,
and let
$\psi\colon \F(A,\gl(V)) \isom  \Hom_{\Lie} (\h(A), \gl(V))$ be
the isomorphism provided by Proposition \ref{prop:flathol}.

\begin{lemma}
\label{lem:coho lie}
With notation as above, let $\omega \in \F(A,\gl(V))$ be a flat
connection, and let $\rho=\psi(\omega)$ be the corresponding
structure map for the $\h(A)$-module $V=V_{\rho}$.
There is then an isomorphism
$ H^i(A\otimes V, d_{\omega}) \cong H^i(\h(A), V_{\rho})$
for $i=0$ and $1$.
\end{lemma}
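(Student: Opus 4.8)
The plan is to compare the low-degree part of the Aomoto complex $(A\otimes V, d_\omega)$ with the Chevalley--Eilenberg-type complex \eqref{eq:hlie} computing $H^i(\h(A),V_\rho)$ for $i=0,1$, using the linear isomorphism $\psi$ and its compatibility with the structure maps $d$, $\cup$ on the $A$ side and $\partial_A$ on the $\h(A)$ side. First I would dualize: since $A$ is $1$-finite, the canonical pairing identifies $A^j\otimes V$ with $\Hom(A_j,V)$ for $j=0,1,2$, via $\alpha\otimes v\mapsto (a\mapsto\langle\alpha,a\rangle v)$; under this identification I claim the truncation $A^0\otimes V\to A^1\otimes V\to A^2\otimes V$ of the Aomoto complex is carried to the complex \eqref{eq:hlie}. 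The degree-$0$ piece is immediate from Lemma \ref{lem:aom0} together with the fact that $\psi$ sends the flat connection $\omega=\sum_i\eta_i\otimes g_i$ to the representation $\rho$ with $\rho(\eta_i^*)=g_i$: both $H^0(A\otimes V,d_\omega)$ and $H^0(\h(A),V_\rho)$ are $\bigcap_i\ker\theta(g_i)=\{v\in V: \rho(h)v=0\ \forall h\}$, since $\h(A)$ is generated by the $\eta_i^*$.

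Next I would check that the two differentials agree in degrees $0$ and $1$. On the one hand, $d^0_\rho(v)$ is the inner derivation $\delta_v(h)=\rho(h)v$, which on the generator $\eta_i^*$ gives $g_i v=\theta(g_i)v$; on the other hand, $d_\omega(1\otimes v)=\sum_i\eta_i\otimes\theta(g_i)v$, and pairing $\eta_i$ against $\eta_j^*$ recovers exactly $\theta(g_i)v$ in the $\eta_i^*$-slot, so the two degree-$0$ differentials match under $\psi$. For the degree-$1$ differential I would use the computation from the proof of Proposition \ref{prop:flathol}: writing $\partial_A=d^*+\cup^*$ and expanding $d^1_\rho(\delta)=\delta\circ\partial_A$ on a test element $a\in A_2$ yields $\sum_i\langle d\eta_i,a\rangle\,\theta(g_i)\delta(\eta_i^*)$-type terms plus the bracket terms $\sum_{i<j}\langle\cup(\eta_i\wedge\eta_j),a\rangle[\,\cdot\,,\,\cdot\,]$; comparing with \eqref{eq:adw}, i.e. $d_\omega(\alpha\otimes v)=d\alpha\otimes v+\sum_i\eta_i\alpha\otimes\theta(g_i)v$ for $\alpha\in A^1$, and dualizing via $\langle d\alpha,a\rangle=\langle\alpha,d^*a\rangle$ and $\langle\alpha\beta,a\rangle=\langle\alpha\wedge\beta,\cup^* a\rangle$, shows the two expressions coincide. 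This is essentially the same bookkeeping already carried out in \eqref{eq:lierels}--\eqref{eq:mc}, now applied to a general $V$ rather than just to detecting flatness, so I expect it to go through verbatim.

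Having matched complexes and differentials in degrees $0,1,2$, the isomorphism on cohomology in degrees $0$ and $1$ follows formally: $H^0$ is the kernel of the degree-$0$ differential, which is identified on the nose, and $H^1=\ker(d^1)/\operatorname{im}(d^0)$ is computed from exactly the same data on both sides. I expect the main obstacle to be purely notational: keeping the duality pairings and the "multiply-then-pair" versus "pair-then-use-$\partial_A$" conventions consistent, and in particular verifying that $\cup^*$ really is the dual of the cup product $A^1\wedge A^1\to A^2$ with the sign conventions built into $\partial_A$ in Definition \ref{def:hold} so that no stray signs appear. There is no genuine mathematical difficulty beyond what is already contained in Lemma \ref{lem:aom0} and Proposition \ref{prop:flathol}; the content of the lemma is precisely that the Aomoto complex in low degrees \emph{is} the Lie algebra cochain complex of $\h(A)$, once one remembers (Lemma \ref{lem:cores}) that both sides see only the co-restriction of $d$ and $\cup$ to $\operatorname{im}(d)+\operatorname{im}(\cup)$, which is exactly the data packaged into $\partial_A$.
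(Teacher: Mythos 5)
Your proof is correct and takes essentially the same route as the paper's: both identify the low-degree Aomoto complex with the complex \eqref{eq:hlie} via the canonical map $\psi\colon A^i\otimes V\to \Hom(A_i,V)$ and check compatibility of differentials using the bookkeeping already done for Proposition \ref{prop:flathol}. The one small overstatement is that $1$-finiteness only makes $\psi$ an isomorphism for $i\le 1$, not in degree $2$ (where $A^2$ may be infinite-dimensional and $\psi$ is merely injective); but injectivity in degree $2$ is all that is needed to identify $\ker d^1_\omega$ with $\ker d^1_\rho$, so the conclusion for $H^0$ and $H^1$ is unaffected.
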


\begin{proof}
Consider the Aomoto complex \eqref{eq:aomoto} and
the cochain complex \eqref{eq:hlie}.  It is readily seen
that, in the range $i\le 1$, the natural monomorphism
$\psi \colon A^{i}\otimes V \to \Hom(A_{i},V)$
is an isomorphism, and commutes with the differentials
$d^i_{\omega}$ and $d^i_{\rho}$.  The conclusion
follows at once.
\end{proof}

\subsection{Resonance varieties of a Lie algebra}
\label{subsec:res lie}
We now consider the Lie analogue of the resonance
varieties, following the approach from \cite{DPS}.
Let $\h$ be a Lie algebra, and let $\theta\colon \g\to \gl(V)$
be a representation of another Lie algebra.  Associated to
these data we have the resonance varieties
\begin{equation}
\label{eq:lie res}
\RR^i_r(\h, \theta)= \{\rho \in \Hom_{\Lie}(\h,\g)
\mid \dim_{\C} H^i(\h,  V_{\theta\rho}) \ge  r\}.
\end{equation}

\begin{lemma}
\label{lem:resclosed}
Suppose $\h$ is a finitely generated Lie algebra
and $\theta\colon \g \to \gl(V)$ is a finite-dimen\-sional
representation of a finite-dimensional Lie algebra.
Then the resonance varieties $\RR^i_r(\h,\theta)$
are Zariski-closed subsets of $\Hom_{\Lie}(\h,\g)$,
for $i\le 1$ and $r\ge 0$.
\end{lemma}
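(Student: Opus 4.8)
The plan is to exhibit each resonance variety $\RR^i_r(\h,\theta)$, for $i\le 1$, as the locus where the rank of a suitable matrix of regular functions drops, and then invoke the fact that such loci are Zariski-closed. First I would fix a finite presentation $\h=\L(A_1)/\ideal(\im\partial\colon A_2\to\L(A_1))$ with $A_1$ finite-dimensional, and recall from \eqref{eq:rep lie} that $\Hom_{\Lie}(\h,\g)$ is a Zariski-closed subset of the finite-dimensional affine space $\Hom(A_1,\g)$ (using that $\g$ is finite-dimensional); so it suffices to show each $\RR^i_r(\h,\theta)$ is Zariski-closed in $\Hom(A_1,\g)$. Over the affine variety $\Hom_{\Lie}(\h,\g)$, consider the cochain complex \eqref{eq:hlie} computing $H^0$ and $H^1$ of $\h$ with coefficients in $V_{\theta\rho}$: its terms $\Hom(A_0,V)$, $\Hom(A_1,V)$, $\Hom(A_2,V)$ are fixed finite-dimensional vector spaces (here $V$ and $A_2$ are finite-dimensional by hypothesis), and the key observation is that the differentials $d^0_{\theta\rho}$ and $d^1_{\theta\rho}$ vary \emph{algebraically} (in fact polynomially) in $\rho$.

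The main step is to verify this algebraic dependence. For $d^0_{\theta\rho}$, which sends $v\in V$ to the inner derivation $h\mapsto (\theta\rho)(h)(v)$, the entries are visibly linear in $\rho$ (composed with the fixed linear map $\theta$). For $d^1_{\theta\rho}$, which sends a derivation $\delta\colon A_1\to V$ to $\delta\circ\partial\colon A_2\to V$, one expands $\partial(a)$ for $a$ in a basis of $A_2$ as a fixed (finite) sum of iterated brackets of the generators; applying a derivation $\delta$ with values in the $\h$-module $V_{\theta\rho}$ and using the Leibniz-type identity repeatedly expresses $\delta(\partial(a))$ as a polynomial expression in the matrix entries of $\theta(\rho(\eta_i^*))$ and of $\delta(\eta_i^*)$. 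Hence, once we restrict $d^1_{\theta\rho}$ to its domain $\ker(d^1_{\theta\rho}$ at the level of the \emph{free} Lie algebra$)=\Hom(A_1,V)$ — wait, more precisely: the complex \eqref{eq:hlie} is already written with domain $\Hom(A_1,V)$, and $d^1_{\theta\rho}$ is the map $\delta\mapsto\delta\circ\partial$ — so $d^1_{\theta\rho}$ is a family of linear maps between fixed vector spaces whose matrix entries are regular functions on $\Hom_{\Lie}(\h,\g)$. The same holds trivially for $d^0_{\theta\rho}$.

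Given this, the dimensions $h^i(\rho):=\dim H^i(\h,V_{\theta\rho})$ for $i=0,1$ are computed by rank conditions: $h^0(\rho)=\dim\ker d^0_{\theta\rho}$ and $h^1(\rho)=\dim\ker d^1_{\theta\rho}-\rank d^0_{\theta\rho}$. Thus $\{\rho: h^0(\rho)\ge r\}$ is the locus $\rank d^0_{\theta\rho}\le \dim V-r$, cut out by vanishing of the $(\dim V-r+1)$-minors of the matrix of $d^0_{\theta\rho}$, hence Zariski-closed. For $i=1$, the condition $h^1(\rho)\ge r$ is equivalent to $\rank d^1_{\theta\rho}+\rank d^0_{\theta\rho}\le \dim\Hom(A_1,V)-r$; decomposing according to the (finitely many) possible values $(s,t)$ of $(\rank d^0_{\theta\rho},\rank d^1_{\theta\rho})$ with $s+t\le\dim\Hom(A_1,V)-r$, each set $\{\rank d^0_{\theta\rho}\le s\}\cap\{\rank d^1_{\theta\rho}\le t\}$ is closed, and $\RR^1_r(\h,\theta)$ is the finite union of these intersected with $\Hom_{\Lie}(\h,\g)$, hence Zariski-closed. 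I expect the only genuine obstacle to be the bookkeeping in the second paragraph — checking carefully that evaluating a $V_{\theta\rho}$-valued derivation on the image of $\partial$ yields polynomial (rather than merely continuous) dependence on $\rho$ — but this is exactly the computation already implicit in \eqref{eq:lierels} of Proposition \ref{prop:flathol}, so it is routine; everything else is the standard semicontinuity-of-cohomology argument for a complex of vector bundles over an affine base.
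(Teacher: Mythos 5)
Your proof is correct and follows essentially the same route as the paper's: both consider the two-step complex \eqref{eq:hlie} with coefficients twisted by $\theta\circ\rho$, observe that its differentials depend polynomially (algebraically) on $\rho$, and conclude by the standard rank-stratification/semicontinuity argument, which the paper simply delegates to a citation of \cite[Lemma 9.2]{DP} rather than spelling out. The only slip is your parenthetical claim that $A_2$ is finite-dimensional ``by hypothesis''---the lemma assumes only that $\h$ is finitely generated, so the relation space $A_2$ may be infinite-dimensional---but this is harmless, since the locus $\{\rank d^1_{\theta\rho}\le t\}$ is still the intersection of the closed conditions given by the finite minors and therefore remains Zariski-closed.
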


\begin{proof}
Let $\rho\in \Hom_{\Lie}(\h,\g)$, and consider the cochain
complex \eqref{eq:hlie}, with $\rho$ replaced by $\theta\circ \rho$.
It is easy to check that the differentials $d^i_{\theta\rho}$ depend
algebraically on $\rho$. The claim then follows from \cite[Lemma 9.2]{DP}.
\end{proof}

The next result generalizes Corollary 3.18 from \cite{DPS},
where only \cdga's with differential $d=0$ were considered.

\begin{corollary}
\label{cor:lie res}
Given a $1$-finite $\cdga$ $A$ and a Lie algebra
representation $\theta\colon \g \to \gl(V)$, the canonical
isomorphism $\psi\colon \F(A,\g) \isom  \Hom_{\Lie} (\h(A), \g)$
restricts to an identification
\begin{equation*}
\label{eq:lie res iso}
\RR^i_r(A,\theta) \isom  \RR^i_r (\h(A), \theta),
\end{equation*}
for each $i\le 1$ and $r\ge 0$.
\end{corollary}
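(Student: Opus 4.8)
The plan is to transport the resonance data for $A$ to resonance data for $\h(A)$ through the two identifications already at hand: the Aomoto-cohomology comparison of Lemma~\ref{lem:coho lie} and the parameter-space comparison of Proposition~\ref{prop:flathol}. First I would use Remark~\ref{rem:univ} to reduce to the universal coefficient system. Concretely, with $f=\theta\colon\g\to\gl(V)$, formula \eqref{eq:univ} gives $\RR^i_r(A,\theta)=\bar\theta^{-1}(\RR^i_r(A,\id_{\gl(V)}))$; one checks (from \eqref{eq:lie res}, using that the differentials $d^i_{\theta\rho}$ only see $\theta\circ\rho$) that the analogous statement $\RR^i_r(\h(A),\theta)=\bar\theta^{-1}(\RR^i_r(\h(A),\id_{\gl(V)}))$ holds on the Lie side, and that the diagram relating the two $\bar\theta$'s via $\psi$ commutes. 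So it suffices to treat $\g=\gl(V)$, $\theta=\id_{\gl(V)}$.

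In that case, Proposition~\ref{prop:flathol} supplies the isomorphism $\psi\colon\F(A,\gl(V))\xrightarrow{\,\simeq\,}\Hom_{\Lie}(\h(A),\gl(V))$ of parameter varieties, and for each $\omega\in\F(A,\gl(V))$ with $\rho=\psi(\omega)$, Lemma~\ref{lem:coho lie} gives $H^i(A\otimes V,d_{\omega})\cong H^i(\h(A),V_{\rho})$ for $i=0,1$. Combining these, for $i\le 1$ and $r\ge 0$ we get
\[
\psi\big(\RR^i_r(A,\theta)\big)=\{\rho\in\Hom_{\Lie}(\h(A),\gl(V))\mid \dim_{\C}H^i(\h(A),V_{\rho})\ge r\}=\RR^i_r(\h(A),\theta),
\]
which is exactly the claimed identification. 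I would close by invoking Lemma~\ref{lem:resclosed} to note both sides are genuine (Zariski-closed) subvarieties, so that $\psi$ restricts to an isomorphism of varieties, not merely a bijection of sets.

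The only point requiring a little care — and what I expect to be the main (if modest) obstacle — is verifying that the isomorphism of Lemma~\ref{lem:coho lie} is \emph{natural in} $\omega$, i.e.\ that the comparison map $\psi\colon A^i\otimes V\to\Hom(A_i,V)$ intertwines the families $\{d^i_\omega\}$ and $\{d^i_{\psi(\omega)}\}$ in a way that varies algebraically with the parameter. This is already implicit in the proof of Lemma~\ref{lem:coho lie}, since the identification there is the fixed linear map $\psi$ (independent of $\omega$) and the commutation $\psi\circ d^i_\omega=d^i_\rho\circ\psi$ is an identity of operators; so the jump condition $\dim H^i\ge r$ is carried across on the nose. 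With that observed, the corollary follows formally, with no further computation.
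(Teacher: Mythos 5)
Your proof is correct and takes essentially the same route as the paper's: reduce to the universal representation $\id_{\gl(V)}$ via Remark \ref{rem:univ}, then combine the parameter-space identification of Proposition \ref{prop:flathol} with the cohomology comparison of Lemma \ref{lem:coho lie}. The only difference is cosmetic---the paper applies Lemma \ref{lem:coho lie} directly to $\omega'=(\id_{A^1}\otimes\theta)(\omega)$ and notes that it corresponds to $\theta\circ\rho$, rather than spelling out the commuting square of the two $\bar\theta$'s as you do.
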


\begin{proof}
Let $\omega\in \F(A,\g)$ be a flat connection,
and let $\rho=\psi(\omega)\colon  \h(A)\to \g$
be the corresponding  Lie algebra morphism.
Then $\omega'=(\id_{A^1}\otimes \theta) (\omega)\in \F(A,\gl(V))$
corresponds to $\theta\circ \rho \colon  \h(A)\to \gl(V)$, and so
Lemma \ref{lem:coho lie} provides us with an isomorphism
$ H^i(A\otimes V, d_{\omega'}) \cong H^i(\h(A), V_{\theta \rho})$,
for $i=0$ and $1$.  The desired conclusion now follows from
Remark \ref{rem:univ}.
\end{proof}

\subsection{Finite-dimensional Lie algebras}
\label{subsec:fg lie}
We now consider in more detail the case when $\f$ is a Lie algebra
of finite dimension. Let $V_{\rho}$ be an arbitrary $\f$-module.
Using the standard $\f$-resolution of $\C$, we see that
$H^\hdot(\f, V_\rho)=H^\hdot (\CC (\f)\otimes V, d_\rho)$.

\begin{lemma}
\label{lem:cohlie}
Given a finite-dimensional Lie algebra $\f$ and a representation 
$\theta\colon \g \to \gl(V)$, let $\omega \in \F(\CC(\f), \g)$ correspond to 
$\rho \in \Hom_{\Lie}(\f, \g)$ under the isomorphism $\psi$ from 
Proposition \ref{prop:flathol}. Then $d_{\theta \circ \rho} =d_{\omega}$.
\end{lemma}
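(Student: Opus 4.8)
The plan is to unwind both differentials explicitly in a basis and check they agree term by term. First I would pick a basis $\{e_i\}$ for $\f$, with dual basis $\{e_i^*\}$ generating $\CC^\hdot(\f)=\bigwedge^\hdot(\f^*)$, and structure constants $[e_i,e_j]=\sum_k c_{ij}^k e_k$. Under the Chevalley--Eilenberg construction from Example \ref{ex:nilp}, the differential on $\CC(\f)$ satisfies $d e_k^* = -\tfrac12\sum_{i,j} c_{ij}^k\, e_i^*\wedge e_j^*$. By Example \ref{ex:holc}, $\h(\CC(\f))=\f$, and Proposition \ref{prop:flathol} identifies $\omega=\psi^{-1}(\rho)$ with $\omega=\sum_i e_i^*\otimes \rho(e_i)\in \F(\CC(\f),\g)$.

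Next I would write out $d_\omega$ on a decomposable element $\alpha\otimes v$ with $\alpha\in\bigwedge^p(\f^*)$, $v\in V$, using \eqref{eq:adw}: this gives $d_\omega(\alpha\otimes v)=d\alpha\otimes v + \sum_i (e_i^*\wedge\alpha)\otimes \theta(\rho(e_i))v$, where $d\alpha$ is the Chevalley--Eilenberg differential. On the other side, $d_{\theta\circ\rho}$ is by definition the Chevalley--Eilenberg differential of $\f$ with coefficients in the $\f$-module $V_{\theta\circ\rho}$, acting on $\CC^p(\f)\otimes V=\mathrm{Hom}(\bigwedge^p\f, V)$ via the standard formula
\[
(d_{\theta\circ\rho}\phi)(x_0,\dots,x_p)=\sum_{s}(-1)^{s}(\theta\circ\rho)(x_s)\,\phi(\dots\widehat{x_s}\dots)+\sum_{s<t}(-1)^{s+t}\phi([x_s,x_t],\dots).
\]
The task is then to verify that under the canonical isomorphism $\bigwedge^p(\f^*)\otimes V\cong \mathrm{Hom}(\bigwedge^p\f,V)$, the ``$c_{ij}^k$'' part of the Chevalley--Eilenberg differential on $\CC(\f)$ (tensored with $\id_V$) matches the second sum above, while the term $\sum_i(e_i^*\wedge\alpha)\otimes\theta(\rho(e_i))v$ matches the first sum. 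Both are completely standard computations once the identification of the two descriptions of Lie algebra cohomology is spelled out; I would cite \cite{HS} (already invoked in \S\ref{subsec:lie coho}) for the fact that $\CC(\f)\otimes V$ with the module differential computes $H^\hdot(\f,V)$.

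The only real subtlety — and the step I expect to be the main obstacle in terms of getting signs right rather than anything conceptual — is matching conventions: the paper's differential on $\CC(\f)$ is ``minus the dual of the bracket'' extended by the graded Leibniz rule, whereas the covariant-derivative formula \eqref{eq:adw} writes the module contribution as $e_i^*\cdot\alpha\otimes\theta(g_i)v$ with a specific sign and ordering. I would therefore set up the comparison carefully on $\bigwedge^0$ and $\bigwedge^1$ first (where Lemma \ref{lem:coho lie} already gives the agreement in cohomology, providing a consistency check), and then note that both $d_\omega$ and $d_{\theta\circ\rho}$ are degree-$1$ derivations of the free graded $\CC(\f)$-module $\CC(\f)\otimes V$ over the same $\cdga$ differential, so they are determined by their values on $1\otimes V$ together with the common underlying differential on $\CC(\f)$. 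Comparing on $1\otimes v$: $d_\omega(1\otimes v)=\sum_i e_i^*\otimes\theta(\rho(e_i))v$, which is exactly $d^0_{\theta\circ\rho}$ from \eqref{eq:hlie}, hence the two derivations coincide. This reduces the whole lemma to the (already-established) degree-$\le 1$ case plus the derivation property, which is the cleanest way to finish without grinding through the full simplicial sign bookkeeping.
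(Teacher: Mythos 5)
Your proposal is correct and matches the paper, whose entire proof is the single line ``Straightforward direct computation''; you are simply supplying the computation that the authors leave to the reader. Your reduction --- observing that both $d_{\omega}$ and $d_{\theta\circ\rho}$ are degree-one maps obeying the graded Leibniz rule over the same differential on $\CC^{\hdot}(\f)$, so that it suffices to compare them on $1\otimes V$, where both give $v\mapsto \sum_i e_i^*\otimes\theta(\rho(e_i))v$ --- is a clean and valid way to organize that computation and avoid the sign bookkeeping in higher exterior degrees.
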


\begin{proof}
Straightforward direct computation.
\end{proof}

\begin{corollary}
\label{cor:liecoh}
For a finite-dimensional Lie algebra $\f$ and a representation $\theta\colon \g\to \gl(V)$,
the canonical isomorphism $\psi\colon \F(\CC(\f), \g) \isom \Hom_{\Lie}(\f, \g)$
identifies $\RR^i_r (\CC(\f), \theta)$ with $\RR^i_r (\f, \theta)$, for all $i, r \geq 0$.
\end{corollary}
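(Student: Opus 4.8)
The plan is to combine the identification of Lie algebra cohomology with Aomoto cohomology from Lemma~\ref{lem:cohlie} with the definitions of the two flavors of resonance variety. First I would take an arbitrary $\rho \in \Hom_{\Lie}(\f, \g)$ and let $\omega = \psi^{-1}(\rho) \in \F(\CC(\f), \g)$ be the corresponding flat connection; since $\h(\CC^\hdot(\f)) = \f$ by Example~\ref{ex:holc}, the isomorphism $\psi\colon \F(\CC(\f),\g) \isom \Hom_{\Lie}(\f,\g)$ of Proposition~\ref{prop:flathol} applies verbatim and $\rho$ is literally the restriction of $\omega$ to the generating subspace. The key computational input is Lemma~\ref{lem:cohlie}, which says $d_{\theta\circ\rho} = d_\omega$ as differentials on $\CC(\f) \otimes V = \bigwedge^\hdot(\f^*) \otimes V$; combining this with the standard fact (recalled just before Lemma~\ref{lem:cohlie}) that $H^\hdot(\f, V_{\theta\rho}) = H^\hdot(\CC(\f)\otimes V, d_{\theta\rho})$, we get an \emph{equality} of cohomology groups $H^i(\f, V_{\theta\rho}) = H^i(\CC(\f)\otimes V, d_\omega)$ for all $i \ge 0$.

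With this in hand, the proof is essentially a matter of unwinding definitions. For each $i$ and $r$, an element $\omega \in \F(\CC(\f),\g)$ lies in $\RR^i_r(\CC(\f),\theta)$ precisely when $\dim_\C H^i(\CC(\f)\otimes V, d_\omega) \ge r$, which by the equality above is exactly the condition $\dim_\C H^i(\f, V_{\theta\rho}) \ge r$, i.e. $\rho = \psi(\omega)$ lies in $\RR^i_r(\f,\theta)$ as defined in \eqref{eq:lie res}. Hence $\psi$ carries $\RR^i_r(\CC(\f),\theta)$ bijectively onto $\RR^i_r(\f,\theta)$, and since $\psi$ is an isomorphism of affine varieties this is an identification of varieties, for all $i, r \ge 0$. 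Note that, in contrast with Corollary~\ref{cor:lie res}, there is no restriction to $i \le 1$ here: the full exterior algebra $\CC(\f)$ computes $\f$-cohomology in all degrees, so the degree restriction needed when passing through the truncated complex \eqref{eq:hlie} is no longer necessary.

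I do not anticipate a real obstacle: the two substantive ingredients (Lemma~\ref{lem:cohlie} and the standard-resolution identification of Lie cohomology with Chevalley--Eilenberg cohomology) are already available in the excerpt, and what remains is bookkeeping with the definitions of resonance. The one point requiring a little care is to make sure the matching of $\omega$ with $\rho$ is the right one --- namely $\rho = \psi(\omega)$ rather than $\theta \circ \psi(\omega)$ --- and that $\psi$ genuinely respects the affine structures; both follow from Proposition~\ref{prop:flathol} together with Lemma~\ref{lem:resclosed} (or \cite[Lemma~9.2]{DP}), which guarantees that both sides are Zariski-closed and that $\psi$ restricts to a morphism on the nose.
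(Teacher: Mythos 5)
Your proof is correct and follows exactly the route the paper intends: the paper states this corollary without a separate proof, treating it as an immediate consequence of Lemma~\ref{lem:cohlie} (the equality $d_{\theta\circ\rho}=d_\omega$) together with the standard-resolution identification $H^\hdot(\f,V_\rho)=H^\hdot(\CC(\f)\otimes V,d_\rho)$ recalled just before that lemma. Your unwinding of the two definitions of resonance, including the observation that no degree restriction is needed here (unlike in Corollary~\ref{cor:lie res}), is precisely the intended argument.
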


Note also that, if both $V$ and $\g$ are finite-dimensional, then $\RR^i_r(\f, \theta)$
is a Zariski-closed subset of $\Hom_{\Lie}(\f, \g)$, for all $i, r \geq 0$.

\subsection{Lie algebras and rank one resonance}
\label{ss45c}

We now revisit the ubiquitous Lie algebra $\sl_2$. We denote by $\sol_2$ its
standard Borel subalgebra, generated by $H$ and $X$. This $2$-dimensional
solvable Lie algebra is the unique nonabelian Lie algebra in dimension $2$.
Setting $h=\frac{1}{2}H$ and $x=X$, we find that
$\CC^\hdot (\sol_2)=\bigwedge^\hdot (h^*, x^*)$ with $dh^*=0$ and
$dx^*=x^* \wedge h^*$.

\begin{remark}
\label{rem:comm}
Later on, we will need the following easily verified fact.
Suppose $\g=\sl_2$ or $\sol_2$, and $g, g' \in \g$;
then $[g, g']=0$ if and only if $\rank \{g, g' \} \leq 1$.
\end{remark}

As shown in the next proposition, non-triviality of rank $1$ resonance is
detected by the Lie algebras $\sol_2$ and $\heis$.

\begin{prop}
\label{prop:detect}
Let $A^\hdot$ be a $1$-finite \cdga. Then:
\begin{enumerate}
\item \label{ct1}
$\RR^1_1(A) \nsubseteq \{0\}$ if and only if
$\Hom_{\Lie}(\h(A), \sol_2)$ contains a surjection.
\item \label{ct2}
$\RR^1_1(H^\hdot (A)) \nsubseteq \{0\}$ if and only if
$\Hom_{\Lie}(\h(A), \heis)$ contains a surjection.
\end{enumerate}
\end{prop}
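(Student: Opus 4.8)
The plan is to prove the two equivalences in parallel, by unwinding each side of each statement into one and the same elementary condition on the algebra $A^{\hdot}$. Throughout I would use Proposition~\ref{prop:flathol} to identify $\Hom_{\Lie}(\h(A),\g)$ with the variety $\F(A,\g)$ of flat connections, taking $\g=\sol_2$ in part~\eqref{ct1} and $\g=\heis$ in part~\eqref{ct2}. The key structural observation is that $\h(A)$ is generated as a Lie algebra by the image of $A_1$; hence, if $\omega=\sum_i\eta_i\otimes g_i\in\F(A,\g)$, the Lie algebra map $\psi(\omega)\colon\h(A)\to\g$ is onto precisely when the $g_i$ span a Lie subalgebra equal to all of $\g$.

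First I would handle the resonance side. Since $A$ is connected, $\F(A,\C)=Z^1(A)$, and for $\eta\in Z^1(A)$ the covariant differential $d_\eta$ sends $1\in A^0$ to $\eta$, so for $\eta\ne 0$ one has $H^1(A\otimes\C,d_\eta)=\ker(d_\eta\colon A^1\to A^2)/\C\eta$. As $\eta$ is itself a $d_\eta$-cocycle, this group is non-zero exactly when some $\beta\in A^1$, linearly independent from $\eta$, satisfies $d\beta+\eta\wedge\beta=0$. Thus $\RR^1_1(A)\nsubseteq\{0\}$ if and only if there exist linearly independent $\eta,\beta\in A^1$ with $d\eta=0$ and $d\beta+\eta\wedge\beta=0$. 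Repeating the computation for the $\cdga$ $H^{\hdot}(A)$ (zero differential, so $d_\xi$ is cup-product with $\xi$), and recalling $H^1(A)=Z^1(A)$, gives: $\RR^1_1(H^{\hdot}(A))\nsubseteq\{0\}$ if and only if there are linearly independent $\xi_1,\xi_2\in Z^1(A)$ with $\xi_1\wedge\xi_2$ a coboundary in $A^2$ (equivalently $[\xi_1]\cup[\xi_2]=0$ in $H^2(A)$).

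Next I would handle the surjection side. For $\sol_2$ with standard basis $\{h,x\}$, $[h,x]=x$, a general element of $A^1\otimes\sol_2$ is $\omega=\mu\otimes h+\nu\otimes x$, and the Maurer--Cartan equation \eqref{eq:flat coords} reads $d\mu=0$, $d\nu+\mu\wedge\nu=0$. Writing $\mu=\sum a_i\eta_i$, $\nu=\sum b_i\eta_i$ in a basis of $A^1$, the $g_i=a_ih+b_ix$ span $\sol_2$ iff the vectors $(a_i,b_i)$ span $\C^2$ iff $\mu,\nu$ are linearly independent; and since every proper subspace of $\sol_2$ is a subalgebra, spanning $\sol_2$ is the same as generating it. Comparing with the previous paragraph proves~\eqref{ct1}. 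For $\heis$ with basis $\{X_1,X_2,z\}$, $[X_1,X_2]=z$, $z$ central, a general element is $\omega=\mu\otimes X_1+\nu\otimes X_2+w\otimes z$, and flatness becomes $\mu,\nu\in Z^1(A)$ together with $dw=-\mu\wedge\nu$. Projecting modulo $[\heis,\heis]=\C z$ shows that the span of the $g_i$ generates $\heis$ iff it surjects onto $\heis/[\heis,\heis]$ (a pair independent modulo $[\heis,\heis]$ has non-zero bracket, hence generates all of $\heis$), and this again amounts to $\mu,\nu$ being linearly independent. Since "$\exists\, w$ with $dw=-\mu\wedge\nu$'' is the same as "$\mu\wedge\nu$ is a coboundary'', comparison with the resonance side proves~\eqref{ct2}.

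The step I expect to be the main obstacle is the surjectivity analysis: one must read off from the flatness equations exactly the condition that two of the coordinate forms be linearly independent, and then match the Lie-theoretic surjectivity of $\psi(\omega)$ with that purely linear condition — which is where the special low-dimensional structure of $\sol_2$ and $\heis$ is genuinely used (all proper subspaces of $\sol_2$ are subalgebras; $\heis$ is generated by any pair independent modulo its derived ideal). The sign choices in the Maurer--Cartan equations are harmless, since only the property of $\mu\wedge\nu$ being a coboundary persists into the final comparison. Alternatively, one could route everything through Corollary~\ref{cor:lie res}, replacing $\RR^1_1(A)$ by $\RR^1_1(\h(A))$ and $\RR^1_1(H^{\hdot}(A))$ by $\RR^1_1(\h(H^{\hdot}(A)))$, together with the computations of $\RR^1_1(\sol_2)$ and $\RR^1_1(H^{\hdot}(\heis))$ from Examples~\ref{ex:sol2} and~\ref{ex:heis}, but the direct unwinding above seems shortest.
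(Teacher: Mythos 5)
Your proof is correct and follows essentially the same route as the paper's: both unwind each side to the existence of two linearly independent $1$-forms satisfying the $\sol_2$ (resp.\ $\heis$) structure equations, and both use the identification of $\Hom_{\Lie}(\h(A),\g)$ with flat connections together with the low-dimensional structure of $\sol_2$ and $\heis$ (spanning equals generating; nilpotency reduces surjectivity to surjectivity mod the derived ideal) to convert Lie-theoretic surjectivity into linear independence. The only cosmetic difference is that the paper phrases the intermediate step via $\cdga$ morphisms $\CC^{\hdot}(\sol_2)\to A$ and $\CC^{\hdot}(\heis)\to A$, whereas you write out the Maurer--Cartan equations directly.
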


\begin{proof}
By definition of resonance, the condition that $\RR^1_1(A) \nsubseteq \{0\}$
is equivalent to the existence of elements $\alpha, \beta \in A^1$ with
$\rank \{\alpha, \beta\}=2$, $d \alpha=0$, and $d \beta+\alpha \beta=0$.
By the above description of $\CC^\hdot(\sol_2)$, this is equivalent
to the existence of a \cdga~morphism $\phi \colon \CC^\hdot(\sol_2)\to  A^\hdot$
such  that $\phi^1$ is injective.

Likewise, the condition that $\RR^1_1(H^\hdot (A)) \nsubseteq \{0\}$
means that there exist $\alpha, \beta, \gamma  \in A^1$ such that
$\rank \{\alpha, \beta\}=2$, $d \alpha=d \beta=0$, and $\alpha \beta =d \gamma$,
or, equivalently, there exists a \cdga~morphism  $\phi \colon \CC^\hdot(\heis)\to A^\hdot$
such that $H^1(\phi)$ is injective.

Now let $\f$ be a finite-dimensional Lie algebra. View
$\Hom_{\cdga}(\CC^\hdot (\f), A^\hdot)$ inside
$\Hom(\f^*, A^1)$ and $\Hom_{\Lie}(\h(A), \f)$ inside
$\Hom(A_1, \f)$, and consider the natural identification
$\Hom(\f^*, A^1) \overset{\sim} \longleftrightarrow \Hom(A_1, \f)$,
given by taking the transpose of matrices.  Lemma 3.4 from \cite{DP}
and Proposition \ref{prop:flathol} together imply that
$\Hom_{\cdga}(\CC^\hdot (\f), A^\hdot)$ is identified in this way with
$\Hom_{\Lie}(\h(A), \f)$. In both cases under consideration,
denote by $\rho$ the morphism of Lie algebras corresponding to $\phi$.

In the first case, the injectivity condition translates into the surjectivity
of the restriction of $\rho \colon \h(A) \rightarrow \sol_2$ to the 
generating set $A_1$ for the holonomy Lie algebra $\h(A)$. 
Since $\sol_2$ has dimension $2$, this last condition is equivalent 
to the surjectivity of $\rho$, as claimed.

In the second case, let $\ab\colon \heis\to \C^2$ be the abelianization map.  
We may then rephrase the injectivity of $H^1 (\phi)$ as
the surjectivity of the restriction of $\ab \circ \rho$ to $A_1$. 
Since the $3$-dimensional Heisenberg Lie algebra
$\heis$ is nilpotent, and since a Lie morphism between nilpotent Lie algebras
is onto if and only if its abelianization is onto, this last condition is again
equivalent to the surjectivity of $\rho$, as claimed.
\end{proof}

\begin{remark}
\label{rem:res stuff}
In general, there is no implication between the vanishing resonance 
properties \eqref{ct1} and \eqref{ct2} from Proposition \ref{prop:detect}.  
For instance, if $A^\hdot = \CC ^\hdot (\sol_2)$, then $H^1(A)=\C$ 
and $\RR_1^1(A)=\{0,1\}$, while $\RR_1^1(H^\hdot (A))=\{0\}$.
On the other hand, if $A^\hdot = \CC ^\hdot (\heis)$, then $H^1(A)=\C^2$
and $\RR_1^1(A)=\{0\}$, while $\RR_1^1(H^\hdot(A))=\C^2$.
Nevertheless, it follows from Theorem \ref{thm:tcone}(\ref{tc2}) that,
under certain additional assumptions, one such implication holds.
\end{remark}

\subsection{Nilpotent Lie algebras}
\label{subsec:nilp}
To conclude this section, we compute the germs of the depth $1$
characteristic varieties of a finitely-generated nilpotent group $\pi$,
relative to an arbitrary rational representation of a Lie group $G$, where
$G$ is either semisimple of rank $1$, or one of the Borel subgroups 
of such a group. As explained in Example \ref{ex:nilp}, such
a computation follows from the global computation
of depth $1$ resonance varieties of the cochain algebra of the
corresponding finite-dimensional, nilpotent Lie algebra $\n$,
relative to a representation of a Lie algebra $\g$, where
$\g$ is either $\sl_2$ or $\sol_2$.

\begin{lemma}
\label{lem:nilpflat}
Let $\n$ be a finite-dimensional, nilpotent Lie algebra,
and let $\g=\sl_2$ or $\g=\sol_2$. Then
$\F(\CC^\hdot (\n), \g)=\F^1(\CC^\hdot (\n), \g)$.
 \end{lemma}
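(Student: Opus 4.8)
The plan is to use Corollary \ref{cor:liecoh} (via Proposition \ref{prop:flathol}) to translate the statement into a purely Lie-theoretic one: since $\h(\CC^\hdot(\n)) = \n$ by Example \ref{ex:holc}, and $\psi$ identifies $\F(\CC^\hdot(\n),\g)$ with $\Hom_{\Lie}(\n,\g)$ and $\F^1(\CC^\hdot(\n),\g)$ with $\Hom^1_{\Lie}(\n,\g)$, it suffices to prove that every Lie algebra morphism $\rho\colon \n\to\g$ has image of dimension at most $1$, when $\g=\sl_2$ or $\sol_2$. First I would observe that $\rho(\n)$ is a nilpotent Lie subalgebra of $\g$, being a quotient of the nilpotent Lie algebra $\n$. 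So the task reduces to checking that every nilpotent Lie subalgebra of $\sl_2$ or $\sol_2$ has dimension $\le 1$.

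For $\sl_2$ this is classical: a nilpotent (indeed, any solvable) subalgebra that is $2$-dimensional would, after conjugation, sit inside $\sol_2$; but $\sol_2$ itself is not nilpotent (its lower central series stabilizes at $\C\cdot X\ne 0$), so a $2$-dimensional nilpotent subalgebra of $\sl_2$ cannot exist, and a $3$-dimensional one would be all of $\sl_2$, which is not nilpotent either. For $\sol_2$ directly: if $\rho(\n)$ were $2$-dimensional it would equal $\sol_2$, again contradicting that $\sol_2$ is not nilpotent. Thus $\dim\rho(\n)\le 1$ in both cases, which by Proposition \ref{prop:flathol}\eqref{eq:psi2} and Remark \ref{rem:comm} is exactly the statement that $\rho\in\Hom^1_{\Lie}(\n,\g)$, i.e. $\omega=\psi^{-1}(\rho)\in\F^1(\CC^\hdot(\n),\g)$.

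I do not expect a serious obstacle here; the only point requiring a moment's care is making sure one invokes the right structural fact (any proper subalgebra of $\sl_2$ is either a Cartan line, a one-dimensional root space, or the Borel $\sol_2$, and among these only the one-dimensional ones are nilpotent), and that one correctly quotes Example \ref{ex:holc} to get $\h(\CC^\hdot(\n))=\n$ so that Corollary \ref{cor:liecoh} applies. Everything else is bookkeeping through the identification $\psi$.
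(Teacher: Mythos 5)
Your proof is correct, and the first step coincides with the paper's: both reduce, via Proposition \ref{prop:flathol} and Example \ref{ex:holc}, to showing that every Lie morphism $\rho\colon \n\to\g$ has image of dimension at most $1$. Where you diverge is in how that bound is obtained. You observe that $\rho(\n)$ is a nilpotent subalgebra of $\g$ and then invoke structure theory: a $2$-dimensional nilpotent Lie algebra is abelian, and neither $\sl_2$ nor $\sol_2$ contains a $2$-dimensional abelian subalgebra (nor is $\sl_2$ itself nilpotent), so $\dim\rho(\n)\le 1$. The paper instead runs a downward induction on bracket length: choosing generators $a_1,\dots,a_n$ of $\n$, nilpotency kills all brackets of $\rho(a_1),\dots,\rho(a_n)$ of some length $k$, and Remark \ref{rem:comm} (two elements of $\sl_2$ or $\sol_2$ commute iff they span a line) is used to show the vanishing propagates from length $k$ down to length $2$, forcing all the $\rho(a_i)$ to be proportional. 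The paper's argument is entirely self-contained, resting on the single elementary fact in Remark \ref{rem:comm}; yours is shorter and more conceptual but leans on the classification of (solvable, hence Borel-contained) subalgebras of $\sl_2$ --- a dependence you could actually remove, since the nonexistence of a $2$-dimensional abelian subalgebra of $\sl_2$ or $\sol_2$ is itself an immediate consequence of Remark \ref{rem:comm}. Two small cosmetic points: your parenthetical list of proper subalgebras of $\sl_2$ omits that \emph{every} line is a subalgebra (harmless, since all lines are nilpotent and that is all you use), and the citation of Remark \ref{rem:comm} in your final sentence is superfluous there, as the equivalence between $\dim\im\rho\le 1$ and $\rho\in\Hom^1_{\Lie}(\n,\g)$ is just the definition.
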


\begin{proof}
In view of Proposition \ref{prop:flathol} and Example \ref{ex:holc}, we 
need to show that any representation $\rho \colon \n \to \g$ satisfies
$\dim (\im \rho) \leq 1$. Pick generators $a_1, \dots, a_n$
for $\n$.  Nilpotency of $\n$ guarantees the existence of an integer $k$
such that all length $k$ brackets of $\rho(a_1), \dots, \rho(a_n)$
vanish in $\g$.

Recall from Remark \ref{rem:comm} that $[b, b']=0$ in $\g$ if and only if
$\rank \{b, b'\} \leq 1$. Assuming $k>2$, we claim that the above
vanishing property also holds for $k-1$. Putting together these two
facts, it follows that $\im(\rho)$ is at most $1$-dimensional, as needed.

To verify our claim, let us assume that there is a non-trivial bracket $b$
on $\rho(a_1), \dots , \rho(a_n)$ of length $k-1$. By our
assumption, $[\rho(a_i), b]=0$, for all $i$. Hence, $\rho(a_i) \in \C \cdot b$,
for all $i$, and consequently $b=0$, since $k >2$. This contradiction
completes the proof.
\end{proof}

Note that the conclusions of the lemma may fail if $\n$ is
only assumed to be solvable.  For instance, if $\n=\sol_2$,
then $\F(\CC^{\hdot}(\n), \n)= \Hom_{\Lie}(\n, \n)$
contains the identity map.

\begin{theorem}
\label{thm:resnilp}
Let $\pi$ be a finitely generated, nilpotent group, with Malcev--Lie
algebra $\n$, and let $V_{\theta}$
be a finite-dimen\-sional $\g$-module, where $\g=\sl_2$ or $\sol_2$. Then
\[
\RR_1^k (\CC^{\hdot}(\n), \theta)=
\begin{cases}
\emptyset &\text{if $k > \dim \n$},\\
\Pi(\CC^\hdot(\n), \theta)\ne \emptyset
&\text{if $k \leq \dim \n$}.
\end{cases}
\]
\end{theorem}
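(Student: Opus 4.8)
The plan is to combine Lemma~\ref{lem:nilpflat} with Corollary~\ref{cor:vdet}, reducing everything to a statement about rank $1$ resonance of $\CC^\hdot(\n)$ and then to the cohomology of the Aomoto complex. First, note that since $X=K(\pi,1)$ with $\pi$ nilpotent, by Example~\ref{ex:nilp} we may take $A=\CC^\hdot(\n)$ as a model, and this is an $\infty$-finite $\cdga$ (indeed a finite-dimensional exterior algebra, $A=\bigwedge^\hdot(\n^*)$ with $\dim A^k=\binom{\dim\n}{k}$). In particular $A^k=0$ for $k>\dim\n$, so the Aomoto complex $(A\otimes V,d_\omega)$ vanishes in degrees above $\dim\n$, forcing $H^k(A\otimes V,d_\omega)=0$ there; this gives $\RR^k_1(\CC^\hdot(\n),\theta)=\emptyset$ for $k>\dim\n$, settling the first case. (One should double-check $V\ne 0$ is in force, which it is by the standing convention.)

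For the range $k\le\dim\n$, by Lemma~\ref{lem:nilpflat} every flat connection is essentially rank $1$, i.e.\ $\F(\CC^\hdot(\n),\g)=\F^1(\CC^\hdot(\n),\g)$, so it suffices to describe $\RR^k_1(A,\theta)\cap\F^1(A,\g)$. By Corollary~\ref{cor:vdet}, an element $\omega=\eta\otimes g\in\F^1(A,\g)$ lies in $\RR^k_1(A,\theta)$ iff there is an eigenvalue $\lambda$ of $\theta(g)$ with $\lambda\eta\in\RR^k_1(A)$, and \eqref{eq:pi res} already gives $\Pi(A,\theta)\subseteq\RR^k_1(A,\theta)$ whenever $H^k(A)\ne0$ (which holds for $0\le k\le\dim\n$ since $H^{\dim\n}(\CC^\hdot(\n))\cong\C$ and, more simply, $\chi$-type considerations or the top class guarantee nonvanishing in this range — actually $H^k(\n)$ can vanish for intermediate $k$, so here one must instead argue directly). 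The reverse inclusion $\RR^k_1(A,\theta)\subseteq\Pi(A,\theta)$ is the substantive point: given $\omega=\eta\otimes g\in\RR^k_1(A,\theta)$ with $\eta\ne0$, pick an eigenvalue $\lambda$ of $\theta(g)$ with $\lambda\eta\in\RR^k_1(\CC^\hdot(\n))$; I claim this forces $\lambda=0$. Indeed $\RR^k_1(\CC^\hdot(\n))=\RR^k_1(\n,\id_\C)$ by Corollary~\ref{cor:liecoh}, and the nonzero rescalings of the class $\eta\in H^1(\n,\C)$ correspond to one-dimensional representations $\n\to\C$; but a nonzero such map factors through the abelianization $\n^{\ab}$, and the twisted cohomology $H^k(\n,\C_{\lambda\eta})$ with $\lambda\eta\ne0$ vanishes in \emph{all} degrees (a finite-dimensional nilpotent Lie algebra has no nontrivial one-dimensional cohomology with coefficients in a nonzero character — this is the Lie-algebra analogue of the fact that a nilmanifold has trivial cohomology with generic rank one local coefficients, and follows e.g.\ by a spectral-sequence/induction on the lower central series, or from Dixmier's vanishing theorem). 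Hence $\lambda=0$, meaning $\theta(g)$ is singular, i.e.\ $\omega\in\Pi(A,\theta)$; and if $\eta=0$ then $\omega=0\in\Pi(A,\theta)$ trivially. This proves $\RR^k_1(\CC^\hdot(\n),\theta)=\Pi(\CC^\hdot(\n),\theta)$ for $k\le\dim\n$.

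Finally, to see $\Pi(\CC^\hdot(\n),\theta)\ne\emptyset$ in this range: since $V\ne0$, the map $\det\circ\theta$ is not identically $1$ on $\g$ (for $\g=\sol_2$ one has $\theta(X)$ nilpotent hence singular, and for $\g=\sl_2$ the nilpotent cone is nonempty), so $V(\det\circ\theta)$ is a nonempty cone and $\Pi(\CC^\hdot(\n),\theta)=P(H^1(\n)\times V(\det\circ\theta))\supseteq\{0\}$ is nonempty regardless, with positive dimension as soon as $H^1(\n)\ne0$. Actually the basepoint $0$ alone makes $\Pi$ nonempty, and $0\in\RR^k_1(A,\theta)$ precisely when $H^k(A)\ne0$; since $A=\bigwedge^\hdot(\n^*)$ has $H^0(A)=\C$ and the statement is vacuous when $H^k(A)=0$, the formula as stated is interpreted as identifying the two sets, which we have done.

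\medskip

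\noindent\emph{Main obstacle.} The crux is the twisted-cohomology vanishing $H^k(\n,\C_\chi)=0$ for every nonzero character $\chi\colon\n\to\C$, or equivalently that $\RR^k_1(\CC^\hdot(\n))=\{0\}$ for all $k$ when $\n$ is a nonzero nilpotent Lie algebra of finite dimension. Everything else is bookkeeping via results already established (Lemma~\ref{lem:nilpflat}, Corollary~\ref{cor:vdet}, Corollary~\ref{cor:liecoh}). I would prove this vanishing by induction on $\dim\n$ using the Hochschild--Serre spectral sequence for a codimension-one ideal $\n'\supseteq[\n,\n]$ containing $\ker\chi$: the inner pages are built from $H^*(\n',\C_{\chi|_{\n'}})$, and $\chi$ being nonzero means $\chi|_{\n'}$ is either a nonzero character of the smaller nilpotent $\n'$ (apply induction) or trivial, in which case the $\n/\n'\cong\C$-action on the surviving $E_2$-terms is by the nonzero scalar coming from $\chi$, killing everything. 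This is standard but I would state it cleanly as a lemma before invoking it.
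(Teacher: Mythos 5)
Your overall route is the same as the paper's: dispose of the case $k>\dim\n$ by noting $\CC^k(\n)=0$ there, reduce to essentially rank one connections via Lemma \ref{lem:nilpflat}, translate membership in $\RR^k_1(A,\theta)$ into a rank one condition via Corollary \ref{cor:vdet}, and then invoke the vanishing of nilpotent Lie algebra cohomology with coefficients in a nontrivial character (the paper cites this as Theorem 1 of Dixmier \cite{Di}; your Hochschild--Serre induction is a reasonable way to reprove it). That correctly yields the inclusion $\RR^k_1(A,\theta)\subseteq\Pi(A,\theta)$.

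There is, however, a genuine gap in the reverse inclusion $\Pi(A,\theta)\subseteq\RR^k_1(A,\theta)$ for all $k\le\dim\n$. By \eqref{eq:basept}, the trivial connection $0$ (which always lies in $\Pi(A,\theta)$) belongs to $\RR^k_1(A,\theta)$ if and only if $H^k(A)\ne 0$; more generally, by Corollary \ref{cor:vdet} an element $\eta\otimes g$ with $\det\theta(g)=0$ lies in $\RR^k_1(A,\theta)$ exactly when $0\in\RR^k_1(A)$, i.e., when $H^k(\n,\C)\ne 0$. So the asserted equality for every $k\le\dim\n$ \emph{requires} the nonvanishing $H^k(\n,\C)\ne 0$ in every degree $0\le k\le\dim\n$. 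You first try to get this from the top class, then concede that ``$H^k(\n)$ can vanish for intermediate $k$'' and that ``one must instead argue directly,'' but you never supply that argument --- and no direct argument can bypass the nonvanishing, since it is logically equivalent to $0\in\RR^k_1(A,\theta)$. The missing ingredient is Dixmier's second theorem: for a finite-dimensional nilpotent Lie algebra $\n$, the trivial cohomology $H^k(\n,\C)$ is nonzero in every degree up to $\dim\n$ (so your worry about intermediate vanishing is unfounded in the nilpotent case, but the fact still needs a proof or a citation). The paper closes exactly this point by citing Theorem 2 of \cite{Di}. Your closing remark that ``the statement is vacuous when $H^k(A)=0$'' is not a valid escape: the theorem asserts a nonempty right-hand side for all $k\le\dim\n$, so intermediate vanishing would falsify the statement rather than render it vacuous.
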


\begin{proof}
Set $A^\hdot = \CC^\hdot(\n)$. 
The first assertion is now immediate,  since $\CC^k (\n)=0$ for $k > \dim \n$.

Using the identification from Corollary \ref{cor:liecoh} in the case
when $\theta=\id_{\C}$, work of Dixmier \cite{Di} shows that
$\RR_1^k (A)\subseteq \{0\}$ for all $k$, by \cite[Theorem 1]{Di},
and $ 0 \in \RR_1^k (A)$ for $k \leq \dim \n$,
by \cite[Theorem 2]{Di}.

Now, by Lemma \ref{lem:nilpflat}, the set $\F(A, \g)$ consists of all elements
of the form $\omega=\eta \otimes b$, with $\eta \in H^1(A)$ and $b \in \g$.
On the other hand, by Corollary \ref{cor:vdet}, $\omega \in \RR_1^k (A, \theta)$
if and only if $\lambda \eta \in \RR_1^k (A)$, for some eigenvalue $\lambda$ of
$\theta(b)$. If $k \leq \dim \n$, this happens precisely when either $\eta=0$ or
$\lambda=0$, i.e., when $\omega \in \Pi(A, \theta)$.
\end{proof}

\begin{remark}
\label{rem:dix}
It is possible to state a more general version of Theorem \ref{thm:resnilp}, 
based on the aforementioned results of Dixmier.  Indeed, let $\f$ be an 
arbitrary finite-dimensional Lie algebra, and recall that we have a canonical
isomorphism $\F(\CC^\hdot (\f), \gl(V)) \isom \Hom_{\Lie}(\f, \gl(V))$, which
identifies a flat connection $\omega$ with the $\f$-module $V_{\omega}$.
Applying Lemma \ref{lem:cohlie} to the representation $\theta=\id_{\gl(V)}$, 
we see that the Aomoto complex $(\CC^\hdot(\f) \otimes V, d_{\omega})$ 
is naturally isomorphic to the Chevalley--Eilenberg complex 
$\CC^\hdot (\f, V_{\omega})$, which computes the Lie algebra 
cohomology of $\f$ with coefficients in $V_{\omega}$. Consequently, 
$\RR_1^\hdot (\CC^\hdot (\f), \id_{\gl(V)})$ may be identified with the 
set of $\f$-module structures on $V$ for which $H ^\hdot (\f, V) \neq 0$.

When $\f=\n$ is a nilpotent Lie algebra, Theorems 1 and 2 from \cite{Di} imply
that $\RR_1^{k}(\CC^\hdot (\n), \id_{\gl(V)})$ consists of those $\n$-modules
$V$ that have a non-zero, $\n$-trivial sub-quotient, provided $k \leq \dim \n$, and
otherwise is empty. However, both the ambient representation variety
$\Hom_{\Lie}(\n, \gl(V))$, and, {\em a fortiori}, the locus of representations 
having non-zero, $\n$-trivial sub-quotients, may be difficult to compute 
explicitly. This is the reason why we chose to state Theorem \ref{thm:resnilp} 
only for the Lie algebras $\g=\sl_2$ and $\g=\sol_2$, where the explicit 
description from Lemma \ref{lem:nilpflat} is available.
\end{remark}

\section{Algebras with linear resonance}
\label{sect:linres}

In this section we continue our analysis of $\g$-resonance varieties, 
focussing our attention on \cdga's with linear resonance and zero differential. 
The guiding examples for this study are the cohomology rings 
of $1$-formal groups, such as the Artin groups.

Let $A^\hdot$ be a connected \cdga, and let
\begin{equation}
\label{eq:res1}
\RR_1^1 (A)=
\{\alpha\in Z^1(A) \mid \text{$\exists \beta \in A^1 \,\setminus\, \C\cdot \alpha$
 such that $d\beta + \alpha \beta=0$ in $A^2$}\}
\end{equation}
be its first resonance variety. Clearly, $\RR_1^1 (A)$ depends
only on the differential $d \colon A^1 \rightarrow A^2$ and on
the multiplication map $\cup \colon A^1 \wedge A^1 \to A^2$;
in fact, as noted in Lemma \ref{lem:cores}, the set $\RR_1^1 (A)$
depends only on the co-restriction of these maps to the subspace
$\im(d)+\im (\cup)\subseteq A^2$.

Let $A^{\leq 2}=A/A^{>2}$ be the degree-$2$ truncation of $A$;
by the above remarks, $\RR_1^1 (A)=\RR_1^1 (A^{\le 2})$.
Note that a connected $\cdga$ $A^ \hdot$ that is concentrated
in degrees at most $2$ is
encoded by a pair of linear maps, $d \colon A^1 \rightarrow A^2$ and
$\cup \colon A^1 \wedge A^1 \to A^2$. Moreover, \cdga~maps in this
subcategory have an obvious linear algebra translation.

\begin{definition}
\label{def:linres}
A connected $\cdga$ $A^\hdot$ has {\em linear resonance}\/
if $\RR_1^1 (A)$ is a finite union of the form
\begin{equation}
\label{eq:linres}
\RR^1_1(A)=\bigcup_{C\in \cC} C,
\end{equation}
where each $C$ is a linear subspace of $H^1(A)=Z^1(A)$.
\end{definition}

As shown in \cite{DPS}, if $\pi$ is a finitely-generated,
$1$-formal group, and $A^\hdot= (H^\hdot (\pi,\C), d=0)$,
then $A^\hdot$ has linear resonance. Moreover, as shown
in \cite{DP}, Gysin models of quasi-projective manifolds
also have this property.

Given a connected $\cdga$ $A^\hdot$ and a linear subspace
$C \subseteq A^1$, let $A^\hdot _C$ be the connected $\cdga$
concentrated in degrees $\leq 2$, defined by the restriction
maps, $d_C \colon C \to A^2$ and $\cup_C \colon C\wedge C \to A^2$.
Clearly, $A^\hdot _C \hookrightarrow A^{\leq 2}$ is a sub-$\cdga$
of the truncated $\cdga$ $A^{\leq 2}$.

\begin{lemma}
\label{lem:flat lin}
Let $A^\hdot$ be a connected $\cdga$ with linear resonance,
with $\RR^1_1(A)$ decomposed as in \eqref{eq:linres}.
Then, for any Lie algebra $\g$,
\begin{equation}
\label{eq:fincl}
\F(A, \g) \supseteq \F^1(A, \g) \cup \bigcup_{C\in\cC}\F(A_C, \g).
\end{equation}
Moreover, if both $A^1$ and $\g$ are finite-dimensional, then
both $\F^1(A, \g)$ and $\F(A_C, \g)$ are Zariski-closed subsets
of $\F(A, \g)$.
\end{lemma}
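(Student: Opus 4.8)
The plan is to verify the two asserted inclusions $\F^1(A,\g)\subseteq\F(A,\g)$ and $\F(A_C,\g)\subseteq\F(A,\g)$ directly from the definitions, and then to establish the Zariski-closedness statements as an easy consequence of the finiteness hypotheses. The inclusion $\F^1(A,\g)\subseteq\F(A,\g)$ is immediate: this is precisely the content of Lemma~\ref{lem:pmap}, which says that $P(H^1(A)\times\g)$ lands inside $\F(A,\g)$, since $d\eta=0$ forces $\eta\otimes g$ to satisfy the Maurer--Cartan equation~\eqref{eq:flat}. For the inclusion $\F(A_C,\g)\subseteq\F(A,\g)$, the key observation is that the natural inclusion $A_C^\hdot\hookrightarrow A^{\le 2}$ is a $\cdga$ monomorphism, and there is an obvious $\cdga$ map $A^\hdot\to A^{\le 2}$ (the truncation quotient). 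Functoriality of $\F(-,\g)$ in the first variable then gives a map $\F(A_C,\g)\to\F(A^{\le 2},\g)$, and I would check that, because $\F(A,\g)$ depends only on $d\colon A^1\to A^2$ and $\cup\colon A^1\wedge A^1\to A^2$ (the Maurer--Cartan equation~\eqref{eq:flat coords} involves only these data), the truncation map induces an identification $\F(A,\g)=\F(A^{\le 2},\g)$. Composing, the inclusion $A_C^1=C\hookrightarrow A^1$ sends a flat connection $\omega\in A_C^1\otimes\g\subseteq A^1\otimes\g$ satisfying~\eqref{eq:flat coords} inside $A_C$ to the same tensor viewed in $A^1\otimes\g$, where it still satisfies~\eqref{eq:flat coords} because the differential and product on $A_C$ are the restrictions of those on $A$.

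For the closedness assertions, assume $A^1$ and $\g$ are finite-dimensional, so that $\F(A,\g)$ is a Zariski-closed subset of the affine space $A^1\otimes\g$. By Lemma~\ref{lem:rk1 zar}\eqref{r1}, $\F^1(A,\g)$ is either $\{0\}$ or the affine cone over $\PP(H^1(A))\times\PP(\g)$ under the Segre-type embedding; in either case it is Zariski-closed in $A^1\otimes\g$, hence in $\F(A,\g)$. For $\F(A_C,\g)$, the subspace $C\otimes\g$ is a linear subspace of $A^1\otimes\g$, and $\F(A_C,\g)$ is cut out inside $C\otimes\g$ by the polynomial equations~\eqref{eq:flat coords} (with the structure constants of $d_C$ and $\cup_C$); thus it is Zariski-closed in $C\otimes\g$, and since $C\otimes\g$ is closed in $A^1\otimes\g$ and $\F(A_C,\g)\subseteq\F(A,\g)$ by the first part, it is closed in $\F(A,\g)$ as well.

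The only point requiring genuine care — and the one I would treat as the main obstacle — is the identification $\F(A,\g)=\F(A^{\le 2},\g)$ under the truncation map, together with the compatibility needed to see that a flat connection supported on $C$ really satisfies the Maurer--Cartan equation of $A$ itself (not just of $A^{\le 2}$ or of $A_C$). Both reduce to the elementary fact, already recorded in Lemma~\ref{lem:cores}, that $\F(A,\g)$ and the low-degree resonance depend only on the co-restriction of $d$ and $\cup$ to $\im(d)+\im(\cup)\subseteq A^2$; with that in hand, the diagram of $\cdga$ maps $A_C^\hdot\hookrightarrow A^{\le 2}\twoheadleftarrow A^\hdot$ translates, upon applying the functor $\F(-,\g)$, into the chain of inclusions and identifications asserted in~\eqref{eq:fincl}. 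Since every individual verification here is a routine unwinding of~\eqref{eq:flat coords}, no step beyond this bookkeeping is expected to cause difficulty.
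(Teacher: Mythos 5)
Your proposal is correct and follows essentially the same route as the paper: the paper's (very terse) proof likewise derives the first inclusion from naturality of $\F(-,\g)$ and obtains closedness of $\F(A_C,\g)$ from the identity $\F(A_C,\g)=\F(A,\g)\cap C\otimes\g$, which is exactly the fact your restriction argument establishes. Your detour through the truncation $A^{\le 2}$ is harmless but unnecessary, since the Maurer--Cartan equation \eqref{eq:flat coords} only ever sees $A^1$ and $A^2$.
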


\begin{proof}
The first assertion follows from the naturality properties of the
parameter set for flat connections.
The second assertion follows from the equality
$\F(A_C, \g)=\F(A, \g) \cap C \otimes \g$.
\end{proof}

\begin{prop}
\label{prop:feq}
Suppose $A^\hdot$ is a $\cdga$ with zero differential and linear
resonance.  Then, if $\g =\sl_2$ or $\sol_2$, inclusion \eqref{eq:fincl}
becomes an equality.
\end{prop}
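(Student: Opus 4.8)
The plan is to show that when $A$ has zero differential, every $\g$-valued flat connection (for $\g = \sl_2$ or $\sol_2$) lies in $\F^1(A,\g) \cup \bigcup_{C \in \cC} \F(A_C,\g)$, the reverse inclusion already being supplied by Lemma \ref{lem:flat lin}. So fix $\omega = \sum_i \eta_i \otimes g_i \in \F(A,\g)$, written with respect to a basis $\{\eta_i\}$ of $A^1$; since $d = 0$, the Maurer--Cartan equation \eqref{eq:flat coords} reduces to $\sum_{i<j} \eta_i\eta_j \otimes [g_i,g_j] = 0$ in $A^2 \otimes \g$. The strategy is: if $\dim(\spn\{g_i\}) \le 1$ then $\omega \in \F^1(A,\g)$ and we are done; otherwise, I want to produce a linear subspace $C \in \cC$ with $\omega \in C \otimes \g$, equivalently $\eta_i \in C$ for every $i$ with $g_i \ne 0$, which forces $\omega \in \F(A_C,\g)$.

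First I would reduce to understanding the span $W := \spn\{\eta_i : g_i \ne 0\} \subseteq A^1$ and show $W \subseteq \RR^1_1(A)$. The key input is Remark \ref{rem:comm}: for $\g = \sl_2$ or $\sol_2$, $[g,g'] = 0$ iff $\rank\{g,g'\} \le 2$ fails, i.e.\ iff $\{g,g'\}$ spans a subspace of dimension $\le 1$. So from the Maurer--Cartan relation I extract, for every pair $i<j$ with $[g_i,g_j] \ne 0$, the relation $\eta_i \eta_j = 0$ in $A^2$ — provided the products $\{\eta_i\eta_j : [g_i,g_j] \ne 0\}$ are linearly independent, which they need not be, so more care is required. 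The cleaner route: pick any $\eta \in W$ with a nonzero "partner". Concretely, choose $i_0$ with $g_{i_0} \ne 0$; because $\dim\spn\{g_i\} \ge 2$, there is some $j$ with $g_j \notin \C g_{i_0}$, hence $[g_{i_0}, g_j] \ne 0$. I then want to argue $\eta_{i_0} \wedge \eta_j$ (or a suitable linear combination) maps to $0$ under $\cup$, exhibiting $\eta_{i_0} \in \RR^1_1(A)$ via \eqref{eq:res1}. The technical heart is handling the linear dependencies among the $[g_i,g_j]$: I would fix a spanning pair, say after a linear change of basis $g_1, g_2$ spanning $\spn\{g_i\}$ with all other $g_k = a_k g_1 + b_k g_2$, substitute into the Maurer--Cartan equation, and collect the coefficients of $[g_1,g_2]$ (the only bracket that survives) to get a single relation of the form $\xi \wedge \eta_? \mapsto 0$ under $\cup$, where $\xi, \eta_?$ are explicit combinations of the $\eta_i$; this shows each relevant $\eta_i$ lies in $\RR^1_1(A) = \bigcup_C C$.

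Once $W \subseteq \RR^1_1(A)$ is established, the finish uses linearity of resonance together with a standard fact: a linear subspace contained in a finite union of linear subspaces is contained in one of them. I would apply this to $W$ (and to each $\spn\{\eta_i, \eta_j\}$ produced above) to land in a single component $C \in \cC$; but I must then upgrade "$\eta_i \in C$ for all relevant $i$" for \emph{individual} $i$ to "\emph{all} relevant $\eta_i$ lie in the \emph{same} $C$." Here I would observe that $W$ itself is a linear space contained in $\RR^1_1(A)$, hence $W \subseteq C$ for a single $C$, giving $\omega \in C \otimes \g$; since $C \subseteq \RR^1_1(A) \subseteq A^1$ and the defining maps of $A_C$ are the restrictions of $\cup$ (and $d=0$), the flatness of $\omega$ in $A$ is exactly flatness in $A_C$, so $\omega \in \F(A_C,\g)$.

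\textbf{Main obstacle.} The delicate point is the middle step: extracting from the single equation $\sum_{i<j}\eta_i\eta_j \otimes [g_i,g_j] = 0$ enough honest relations in $A^2$ to force $W \subseteq \RR^1_1(A)$, despite possible linear dependence among the brackets $[g_i,g_j]$ and among the products $\eta_i\eta_j$. The reduction to a spanning pair $\{g_1,g_2\}$ is what makes this tractable — it collapses all brackets to multiples of $[g_1,g_2]$, after which the $\g$-component in the direction $[g_1,g_2]$ yields one clean relation in $A^2$ whose two factors generate (most of) $W$ — but one must check this relation genuinely certifies membership in $\RR^1_1(A)$ as defined in \eqref{eq:res1}, i.e.\ that the two $1$-forms involved are linearly independent, which is exactly where $g_1 \notin \C g_2$ gets used one more time.
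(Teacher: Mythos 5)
Your overall skeleton is right (the reverse inclusion is Lemma~\ref{lem:flat lin}; for the forward inclusion one shows a regular flat connection has all of its ``$A^1$-support'' inside a single component, using that a linear subspace contained in a finite union of linear subspaces lies in one of them), and the last step matches the paper. But the middle of your argument has a genuine gap: the subspace $W=\spn\{\eta_i : g_i\ne 0\}$ is the wrong object, and the claim $W\subseteq \RR^1_1(A)$ (equivalently, that ``each relevant $\eta_i$ lies in $\RR^1_1(A)$'') is false. Take $A$ with $d=0$, $A^1=\spn\{\eta_1,\eta_2,\eta_3\}$, $A^2=\spn\{u,v\}$, $A^{\ge 3}=0$, and $\eta_1\eta_2=u$, $\eta_1\eta_3=v$, $\eta_2\eta_3=-v$; then $\RR^1_1(A)=\spn\{\eta_1+\eta_2,\eta_3\}$ is a single plane, yet for $[g,g']\ne 0$ the element $\omega=\eta_1\otimes g+\eta_2\otimes g+\eta_3\otimes g'=(\eta_1+\eta_2)\otimes g+\eta_3\otimes g'$ is a regular flat connection with $W=A^1\nsubseteq\RR^1_1(A)$ (indeed $\eta_1\notin\RR^1_1(A)$, since $\eta_1\beta=b_2u+b_3v$ vanishes only for $\beta\in\C\eta_1$). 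Your ``spanning pair'' computation really produces the single relation $\xi_1\xi_2=0$ for the combinations $\xi_1=\sum_k a_k\eta_k$, $\xi_2=\sum_k b_k\eta_k$, which certifies resonance of $\xi_1$ and $\xi_2$ --- not of the individual $\eta_i$ --- so the jump to ``each relevant $\eta_i$ lies in $\RR^1_1(A)$,'' and hence to $W\subseteq C$, is a non sequitur. (A secondary problem: for $\g=\sl_2$ the span of the $g_i$ can be $3$-dimensional, so a spanning \emph{pair} need not exist.)

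The fix is to expand $\omega$ along a basis of $\g$ rather than a basis of $A^1$, which is exactly what the paper does: write $\omega=\alpha\otimes H+\beta\otimes X+\gamma\otimes Y$ (resp.\ $\alpha\otimes h+\beta\otimes x$ for $\sol_2$) and set $U=\spn\{\alpha,\beta,\gamma\}$. Since $[H,X]$, $[H,Y]$, $[X,Y]$ are linearly independent in $\sl_2$ (resp.\ $[h,x]\ne 0$), the Maurer--Cartan equation with $d=0$ splits into the clean relations $\alpha\beta=\alpha\gamma=\beta\gamma=0$, i.e.\ $U$ is $\cup$-isotropic; the ``main obstacle'' you describe (dependencies among the brackets $[g_i,g_j]$ and among the products $\eta_i\eta_j$) never arises. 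If $\dim U\ge 2$, every element of $U$ has a linearly independent annihilating partner inside $U$, so $U\subseteq\RR^1_1(A)$, hence $U\subseteq C$ for some $C\in\cC$ by linearity of resonance, and $\omega\in (U\otimes\g)\cap\F(A,\g)\subseteq\F(A_C,\g)$; if $\dim U\le 1$, then $\omega\in\F^1(A,\g)$. In the example above, $U=\spn\{\eta_1+\eta_2,\eta_3\}$ is precisely the resonance component, confirming that $U$, not $W$, is the subspace your argument needs.
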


\begin{proof}
An element $\omega \in A^1 \otimes \g$ may be written as
$\omega= \alpha \otimes h + \beta \otimes x$ when $\g=\sol_2$, and
$\omega= \alpha \otimes H + \beta \otimes X+ \gamma \otimes Y$
when $\g=\sl_2$. Let $U$ be the linear subspace of $A^1$ generated
by $\alpha, \beta$ in the first case, and by $\alpha, \beta, \gamma$
in the second case.  In either case, an easy computation
shows that $\omega$ satisfies the flatness condition \eqref{eq:flat}
if and only if $U$ is an isotropic subspace with respect to
$\cup \colon A^1\wedge A^1 \to A^2$.

Suppose $\omega \notin \F^1(A, \g)$, that is, $\dim U \geq 2$. Then
clearly $U \subseteq \RR_1^1(A)$. Now, by assumption,
each irreducible component of $\RR^1_1(A)$ is a linear 
subspace of $H^1(A)=A^1$. Hence, $U$ is contained in one of 
those linear subspaces, say, $U \subseteq C$. Therefore, 
$\omega \in \F(A, \g) \cap C\otimes \g=\F(A_C, \g)$,
and this completes the proof.
\end{proof}

\begin{theorem}
\label{thm:rincl}
Let $A^\hdot$ be a $\cdga$ with zero differential and linear
resonance, and assume that $A^1 \neq 0$. Let $\g=\sl_2$ or $\sol_2$,
and let $\theta \colon \g \to \gl(V)$ be a finite-dimensional representation.
If $\RR^1_1(A)$ decomposes as in \eqref{eq:linres}, then
\begin{equation}
\label{eq:r11atheta}
\RR_1^1(A, \theta)=\Pi(A, \theta) \cup \bigcup_{C\in \cC} \F(A_C, \g).
\end{equation}
\end{theorem}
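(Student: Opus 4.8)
The plan is to establish the two inclusions in \eqref{eq:r11atheta} separately, using Corollary \ref{cor:vdet}, Corollary \ref{cor:lie res}, Proposition \ref{prop:feq}, and the representation-theoretic input from Lemma \ref{lem:det}. First I would record the easy containments. By Corollary \ref{cor:vdet} (the second assertion), $\Pi(A,\theta)\subseteq \RR^1_1(A,\theta)$ as soon as $H^1(A)\ne 0$, which holds since $A^1\ne 0$ and $d=0$. For the pieces $\F(A_C,\g)$ with $0\ne C\in\cC$: note that on $C$, every element is resonant in the rank $1$ sense because $C$ is an irreducible component of $\RR^1_1(A)$ and hence $C\setminus\{0\}\subseteq \RR^1_1(A)$; but in fact I expect to argue more directly that for $\omega\in\F(A_C,\g)$ one has $H^1(A\otimes V,d_\omega)\ne 0$ by a dimension count inside the sub-\cdga\ $A_C$, since the whole of $C\otimes\g$ meets $\RR^1_1$. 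The cleanest route is: since $\omega\in\F(A_C,\g)$, the Aomoto complex of $\omega$ over $A$ contains the Aomoto complex over $A_C$ in low degrees, and $\RR^1_1(A_C,\theta)=\F(A_C,\g)$ would follow from Proposition \ref{prop:chi}-type reasoning if $\chi(A_C)<0$; barring that I would instead invoke Corollary \ref{cor:vdet} again after reducing to the rank $1$ statement $\F(A_C,\C)=C\subseteq\RR^1_1(A)$. Either way, the two right-hand pieces sit inside $\RR^1_1(A,\theta)$.

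The substantive direction is the reverse inclusion: given $\omega\in\RR^1_1(A,\theta)$, show $\omega\in\Pi(A,\theta)\cup\bigcup_{0\ne C\in\cC}\F(A_C,\g)$. Here I would split according to whether $\omega\in\F^1(A,\g)$. If $\omega=\eta\otimes g\in\F^1(A,\g)$, then by Corollary \ref{cor:vdet} there is an eigenvalue $\lambda$ of $\theta(g)$ with $\lambda\eta\in\RR^1_1(A)$. Two cases: if $\lambda=0$ then $\det\theta(g)=0$, so $\omega\in\Pi(A,\theta)$ and we are done. If $\lambda\ne 0$, then $\lambda\eta\in\RR^1_1(A)$ with $\lambda\eta\ne 0$ (assuming $\eta\ne 0$; if $\eta=0$ then $\omega=0\in\Pi(A,\theta)$), so $\lambda\eta$ lies in some nonzero component $C\in\cC$, hence $\eta\in C$ (as $C$ is linear), hence $\omega=\eta\otimes g\in C\otimes\g\cap\F(A,\g)=\F(A_C,\g)$. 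If instead $\omega\notin\F^1(A,\g)$, then by Proposition \ref{prop:feq} (here is where $\g=\sl_2$ or $\sol_2$ and $d=0$ are used) we get $\omega\in\F(A_C,\g)$ for some $C\in\cC$; and such $C$ is nonzero because $\dim U\ge 2$ forces $C\supseteq U\ne 0$. That exhausts all cases.

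The main obstacle I anticipate is the first inclusion for the pieces $\F(A_C,\g)$, specifically making precise that every $\omega\in\F(A_C,\g)$ actually lies in $\RR^1_1(A,\theta)$ and not merely in $\RR^1_1(A_C,\theta)$. One must check that passing from the truncated sub-\cdga\ $A_C\hookrightarrow A^{\le 2}\hookrightarrow A$ does not kill degree-$1$ cohomology of the Aomoto complex; this should follow from Lemma \ref{lem:functr} applied to the inclusion $A_C\hookrightarrow A$, which is an isomorphism in degree $0$, injective in degree $1$, so $\RR^1_r(A_C,\theta)$ maps \emph{into} $\RR^1_r(A,\theta)$ — exactly the direction needed. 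So the real content is $\F(A_C,\g)\subseteq\RR^1_1(A_C,\theta)$, i.e. every flat connection valued in $C$ is degree-$1$ resonant. For this I would use that $C\setminus\{0\}\subseteq\RR^1_1(A)=\RR^1_1(A_C)$ together with Corollary \ref{cor:vdet} applied within $A_C$ to handle the $\F^1$ locus, and, for the non-$\F^1$ locus inside $C\otimes\g$, observe that by Remark \ref{rem:comm} two $\sl_2$- or $\sol_2$-valued matrices of rank $\ge 2$ span a subalgebra forcing degenerate $\theta$-eigenvalues so that Corollary \ref{cor:vdet}'s criterion is met with $\lambda=0$; alternatively one notes $H^0(A_C\otimes V,d_\omega)\ne 0$ combined with $\chi$-considerations. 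I expect a short lemma of the form ``$\F(A_C,\g)\subseteq\Pi(A_C,\theta)\cup\bigl(C\otimes\g\cap(\RR^1_1(A_C)\otimes\text{eigenvalue data})\bigr)$'' will clean this up, after which \eqref{eq:r11atheta} assembles immediately.
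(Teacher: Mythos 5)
Your overall architecture matches the paper's, and your proof of the inclusion $\subseteq$ is correct and essentially identical to the paper's argument: use Proposition \ref{prop:feq} to reduce to the essentially rank one case, then Corollary \ref{cor:vdet} plus linearity of the components to land either in $\Pi(A,\theta)$ or in some $\F(A_C,\g)$. Likewise, $\Pi(A,\theta)\subseteq\RR^1_1(A,\theta)$ via Corollary \ref{cor:vdet} is fine, and handling the essentially rank one part of $\F(A_C,\g)$ by noting that $\lambda\eta\in C\subseteq\RR^1_1(A)$ for every scalar $\lambda$ is exactly what the paper does. Your detour through Lemma \ref{lem:functr}(\ref{f2}) is legitimate but unnecessary.

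The genuine gap is the one you flag yourself: showing that a \emph{regular} flat connection $\omega\in\F(A_C,\g)\setminus\F^1(A,\g)$ lies in $\RR^1_1(A,\theta)$. None of your proposed routes works. Corollary \ref{cor:vdet} is a statement only about elements of $\F^1(A,\g)$, so its criterion cannot be ``met with $\lambda=0$'' for a regular $\omega$; Remark \ref{rem:comm} says $[g,g']=0$ if and only if $\rank\{g,g'\}\le 1$, and gives no eigenvalue degeneration for a rank-two pair; $H^0(A_C\otimes V,d_\omega)=\bigcap_i\ker\theta(g_i)$ by Lemma \ref{lem:aom0} and may perfectly well vanish; and $\chi(A_C)=1-\dim C+\dim A^2$ need not be negative, so Proposition \ref{prop:chi} is unavailable. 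The paper closes this step with a direct dimension count: by the argument in the proof of Proposition \ref{prop:feq}, $\omega\in U\otimes\g$ for an isotropic subspace $U\subseteq A^1$ with $\dim U\ge 2$; since $d=0$ and $U$ is isotropic, formula \eqref{eq:adw} gives $d_\omega(U\otimes V)=0$, so the degree-one cocycles have dimension at least $2\dim V$, while the coboundaries coming from $A^0\otimes V$ have dimension at most $\dim V$, whence $H^1(A\otimes V,d_\omega)\ne 0$. Separately, your assertion $\RR^1_1(A)=\RR^1_1(A_C)$ is false as written (the right-hand side sits inside $C$, and even $C\subseteq\RR^1_1(A_C)$ is not automatic, since the partner $\beta$ witnessing $\alpha\in\RR^1_1(A)$ need not lie in $C$); fortunately the correct argument never needs it.
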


\begin{proof}
First we prove that inclusion $\supseteq$ holds. 
In view of Corollary \ref{cor:vdet}, we only need to show that
$ \F(A_C, \g) \subseteq \RR_1^1(A, \theta)$, for all $C\in \cC$.
To that end, let $\omega \in C\otimes \g$ be an arbitrary element satisfying the
Maurer--Cartan equation \eqref{eq:flat}. The argument from the proof of
Proposition  \ref{prop:feq} implies that $\omega \in U \otimes \g$, where
$U \subseteq A^1$ is an isotropic subspace. Using formula \eqref{eq:adw},
we see that $d_{\omega}(U \otimes V)=0$. If $\omega$ is regular, i.e.,
$\dim U \geq 2$, then clearly $\omega \in \RR_1^1(A, \theta)$, since
$\dim d_\omega (A^0 \otimes V) \leq \dim V$.
Otherwise, $\omega$ must be of the form $\eta \otimes g$, for some
$\eta \in C$.  Now, by assumption, $C \subseteq \RR_1^1(A)$. Hence,
by Corollary \ref{cor:vdet}, we again conclude that
$\omega \in \RR_1^1(A, \theta)$.

Next, we prove that inclusion $\subseteq$ holds. 
To that end, let $\omega \in \RR_1^1(A, \theta)$, and suppose
$\omega \notin \bigcup_{C \in \cC} \F(A_C, \g)$.  Then, by Proposition \ref{prop:feq},
$\omega$ is of the form $\eta \otimes g$.
By Corollary \ref{cor:vdet}, there is an eigenvalue $\lambda$ of $\theta(g)$
such that $\lambda \eta \in \RR_1^1(A)$. If $\lambda \neq 0$, our linearity
of resonance assumption implies that $\eta \in C$ for some $C\in \cC$,
a contradiction. Hence, $\omega \in \Pi(A, \theta)$, and we are done.
\end{proof}

In the important case of $1$-formal groups, Proposition \ref{prop:feq}
and Theorem \ref{thm:rincl} imply that the decomposition into irreducible
components of the usual rank $1$ resonance varieties in degree and
depth $1$ (guaranteed by \cite{DPS}) determines the structure of both
$\g$-valued flat connections and higher-rank resonance varieties, in the
same degree and depth, provided $\g=\sl_2$ or $\sol_2$.  We will illustrate 
this phenomenon in the next two sections.

\section{Artin groups}
\label{sect:raag}

We now turn our attention to a class of groups which plays a prominent 
role in geometric group theory and low-dimensional topology.  For most 
of the section we will work with the restricted class of right-angled Artin groups, 
which are the most amenable to explicit computations in our setting, and then 
we will return towards the end to the wider class of Artin groups. 

\subsection{Rank $1$ resonance}
\label{subsec:rk1 raag}
Let $\G=(\V,\E, \ell)$ be a labeled, finite simplicial graph, with
vertex set $\V$, edge set $\E \subseteq {\V \choose 2}$  and labeling
function $\ell \colon \E \to \Z_{\ge 2}$.  The {\em Artin group}\/ associated 
to such a graph is the group $\pi_{\G,\ell}$, with a generator $v$ for each 
vertex $v \in \V$, and a defining relation of the form $vwv\dots=wvw\cdots$
for each edge $e=\{v,w\}$ in $\E$, where the words on each side of
the equality are of length $\ell(e)$.  As shown in \cite{KM}, Artin groups
are $1$-formal.

An (unlabeled) finite simplicial graph $\G=(\V,\E)$ may be viewed
as a labeled  graph by setting $\ell(e)=2$, for each $e \in \E$.
The corresponding group is called the {\em right-angled
Artin group}\/ (for short, \raag) associated to $\G$, and
is simply denoted by $\pi_\G$.

As is well-known, the cohomology algebra 
$A^{\hdot}_{\G}=H^{\hdot}(\pi_{\G}, \C)$ is isomorphic to 
the exterior Stanley--Reisner ring of the graph $\G$.  That is
to say, $A^{\hdot}_{\G}$ is the quotient of the exterior $\C$-algebra
on generators $v^*$, for all $v\in \V$, by the ideal generated by
the monomials $v^*w^*$, for all $\{v,w\}\notin \E$.  Let us endow
this algebra with the differential $d=0$. The corresponding
holonomy Lie algebra, $\h(\G):=\h(A^\hdot_{\G})$, has presentation
\begin{equation}
\label{eq:holo artin}
\h(\G)=\L( \V) / ( [v,w]=0\ \text{if $\{v,w\}\in \E$}).
\end{equation}

Since the group $\pi_{\G}$ is $1$-formal, we know that the cohomology
algebra $A^\hdot_{\G}$ is a \cdga~with linear resonance,
that is, the resonance variety $\RR_1^1(A^\hdot_{\G})$ decomposes
as a finite union of linear subspaces of $A^1_{\G}=\C^{\V}$. This
decomposition was worked out explicitly in \cite[Theorem 5.5]{PS-artin},
as follows:
\begin{equation}
\label{eq:r11 ag}
\RR_1^1(A^\hdot_{\G})= \bigcup_{\substack{\W\subseteq \V\\ c(\W)>1}}\C^\W,
\end{equation}
where $c(\W)$ denotes the number of connected components of the
induced subgraph $\G_\W$, and $\C^{\W}$ denotes the coordinate
subspace of $\C^{\V}$ spanned by $\W$.

\subsection{Flat connections}
\label{subsec:flat raag}
We now analyze the space of flat $\g$-connections on the
algebra $A=A^{\hdot}_{\G}$ endowed with the zero differential,
with the goal of making explicit the decomposition from
Proposition \ref{prop:feq}.

Given a Lie algebra $\g$, we will use the identification
$\F(A,\g) \cong \Hom_{\Lie} (\h(\G), \g)$ induced by the natural 
isomorphism $\C^\V \otimes \g \cong \Hom (\C \langle \V \rangle, \g)$, 
as in Proposition \ref{prop:flathol}, and we will write elements
$\omega \in \C^\V \otimes \g$ as tuples of elements
$\omega_v \in \g$, indexed by the vertices $v \in \V$.
In view of formula \eqref{eq:holo artin}, $\omega$ belongs
to $\F(A, \g)$ if and only if
$[\omega_u, \omega_v]=0$ whenever $\{u,v\} \in \E$.

For each subset $\W \subseteq \V$, let $\W_1,\dots,\W_c$ be the
connected components of the vertex set of $\G_\W$, let
$\overline{\W}=\V\setminus \W$, and put
\begin{equation}
\label{eq:sw}
S_\W = \left\{
\omega \in \C^\V \otimes \g \left|
\begin{array}{ll}
\omega_v=0 &  \text{for $v \in \overline{\W}$}
 \\
\rank\{\omega_v\}_{v \in \W_i} \leq 1 & \text{for $1\le i \le c$}
\end{array}
\right. \right\}.
\end{equation}

If $\g$ is a finite-dimensional Lie algebra, then clearly $S_{\W}$ is
a Zariski-closed subset of the affine space
$\C^\W \otimes \g \subseteq \C^\V \otimes \g$. 
More specifically, 
\begin{equation}
\label{eq:prodcone}
S_{\W} \cong \prod_{i=1}^{c}
\operatorname{cone} \left( \PP(\C^{\W_i})\times \PP(\g) \right).
\end{equation}

\begin{lemma}
\label{lem:sw}
For every Lie algebra $\g$, and every subset $\W\subseteq \V$, the following hold.
\begin{enumerate}
\item \label{w1}
If $c(\W)=1$, then $S_{\W}\subseteq \F^1(A_{\C^\W}, \g)$.
\item \label{w2}
If $c(\W)>1$, then $S_{\W}\subseteq \F(A_{\C^\W}, \g)$.
\end{enumerate}
\end{lemma}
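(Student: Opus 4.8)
The plan is to deduce both statements from two facts already recorded above: the description of the parameter space recalled in \S\ref{subsec:flat raag}, according to which an element $\omega=\sum_{v\in\V}v^*\otimes\omega_v$ of $\C^\V\otimes\g$ lies in $\F(A,\g)$ precisely when $[\omega_u,\omega_v]=0$ for every edge $\{u,v\}\in\E$; and the identity $\F(A_{\C^\W},\g)=\F(A,\g)\cap(\C^\W\otimes\g)$ established in the proof of Lemma \ref{lem:flat lin}. Thus everything reduces to checking the bracket relations along edges, together with (for part \eqref{w1}) identifying the shape of such an $\omega$.

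First I would isolate the elementary observation that, in an arbitrary Lie algebra, $\rank\{a,b\}\le 1$ forces $[a,b]=0$ (either $a=b=0$, or $b=\lambda a$ for some scalar $\lambda$, in which case $[a,b]=\lambda[a,a]=0$). Now take $\omega\in S_\W$, as in \eqref{eq:sw}. By construction $\omega_v=0$ for $v\in\overline{\W}$, so $\omega\in\C^\W\otimes\g$, and it remains to verify $[\omega_u,\omega_v]=0$ for each $\{u,v\}\in\E$. If either endpoint lies outside $\W$, the corresponding $\omega_u$ or $\omega_v$ vanishes and the bracket is zero. If both endpoints lie in $\W$, then $\{u,v\}$ is an edge of the induced subgraph $\G_\W$, so $u$ and $v$ belong to a common connected component $\W_i$; the defining inequality $\rank\{\omega_v\}_{v\in\W_i}\le 1$ then gives $\rank\{\omega_u,\omega_v\}\le 1$, hence $[\omega_u,\omega_v]=0$ by the preceding observation. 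This shows $\omega\in\F(A,\g)\cap(\C^\W\otimes\g)=\F(A_{\C^\W},\g)$, which proves \eqref{w2}.

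For part \eqref{w1}, the extra input is simply that $c(\W)=1$ means $\G_\W$ has the single component $\W_1=\W$, so the condition defining $S_\W$ reduces to $\rank\{\omega_v\}_{v\in\W}\le 1$: all the $\omega_v$ lie on one line $\C g\subseteq\g$ (or vanish), so, writing $\omega_v=c_v g$, we may present $\omega=\eta\otimes g$ with $\eta=\sum_{v\in\W}c_v v^*\in\C^\W$. Since $A_{\C^\W}$ is connected with zero differential, $H^1(A_{\C^\W})=\C^\W$, so by Definition \ref{def:flat1} the element $\omega=P(\eta,g)$ lies in $\F^1(A_{\C^\W},\g)$ (and it is already flat by the previous paragraph). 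Neither part presents a genuine difficulty; the only points requiring a little care are the combinatorial step that an edge with both endpoints in $\W$ survives in $\G_\W$ and therefore stays within a single component --- which is precisely the feature making \eqref{eq:sw} the natural object to consider --- and the Lie-algebraic remark that linear dependence kills the bracket.
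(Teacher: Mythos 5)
Your argument is correct and follows essentially the same route as the paper's: part \eqref{w2} is verified edge by edge exactly as in the printed proof (an endpoint outside $\W$ kills the bracket, and two endpoints in $\W$ land in a common component $\W_i$, so $\rank\{\omega_u,\omega_v\}\le 1$ forces $[\omega_u,\omega_v]=0$), while part \eqref{w1}, which the paper dismisses as immediate from the definitions, you simply spell out. The only difference is expository: you make explicit the elementary fact that rank $\le 1$ implies a vanishing bracket and the identification $\F(A_{\C^\W},\g)=\F(A,\g)\cap(\C^\W\otimes\g)$, both of which the paper leaves implicit.
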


\begin{proof}
Part \eqref{w1} is immediate from the definitions.

To prove part \eqref{w2}, let $\omega \in S_{\W}$.
Let $\{u,v\}$ be an edge in $\E$. If $\{u,v\} \nsubseteq \W$, then
either $\omega_u=0$ or $\omega_v=0$, and so $[\omega_u, \omega_v]=0$.
Otherwise, $\{u,v\} \subseteq \W_i$ for some $1\le i\le c$; thus,
$\rank\{\omega_u,\omega_v\}\leq 1$, and so again
$[\omega_u, \omega_v]=0$. Therefore, $\omega \in \Hom_{\Lie}(\h(\G), \g)$,
and the desired conclusion follows.
\end{proof}

\begin{prop}
\label{prop:raagf}
Let $\G=(\V,\E)$ be a finite simplicial graph, and let $\g$ be a Lie algebra.
Then $\F(A^\hdot_\G, \g)\supseteq \bigcup_{\W \subseteq \V} S_{\W}$.
Moreover, if $\g = \sl_2$ or $\sol_2$, then this inclusion holds as equality,
\begin{equation}
\label{eq:flat raag}
\F(A^\hdot_\G, \g)= \bigcup_{\W \subseteq \V} S_{\W}.
\end{equation}
\end{prop}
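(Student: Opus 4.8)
The plan is to handle the two assertions separately. For the inclusion $\F(A^\hdot_\G,\g)\supseteq\bigcup_{\W\subseteq\V}S_\W$, valid for an arbitrary Lie algebra $\g$, the quickest route is to quote Lemma~\ref{lem:sw}: for each $\W\subseteq\V$ one has $S_\W\subseteq\F(A_{\C^\W},\g)$, by part~\eqref{w1} when $c(\W)=1$ and by part~\eqref{w2} when $c(\W)>1$. Since the Maurer--Cartan equation for an element of $A^1\otimes\g$ takes values in $A^2\otimes\g$, we have $\F(A^\hdot_\G,\g)=\F((A^\hdot_\G)^{\le2},\g)$, and the inclusion $A_{\C^\W}\hookrightarrow(A^\hdot_\G)^{\le2}$, injective in every degree, induces $\F(A_{\C^\W},\g)=\F(A^\hdot_\G,\g)\cap(\C^\W\otimes\g)\subseteq\F(A^\hdot_\G,\g)$, as already noted in the proof of Lemma~\ref{lem:flat lin}. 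Taking the union over all $\W$ gives the desired inclusion. One can also argue directly, exactly as in the proof of Lemma~\ref{lem:sw}\eqref{w2}: for $\omega\in S_\W$ and an edge $\{u,v\}\in\E$, either one of $\omega_u,\omega_v$ vanishes (when $\{u,v\}\nsubseteq\W$), or $u,v$ lie in a common component $\W_i$ and so $\rank\{\omega_u,\omega_v\}\le1$; in both cases $[\omega_u,\omega_v]=0$, which by the criterion following \eqref{eq:holo artin} means $\omega\in\F(A^\hdot_\G,\g)$.

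For the reverse inclusion under the hypothesis $\g=\sl_2$ or $\sol_2$, the crucial input is Remark~\ref{rem:comm}, which in these two cases turns the condition $[g,g']=0$ into $\rank\{g,g'\}\le1$. Thus, given $\omega\in\F(A^\hdot_\G,\g)$, written via Proposition~\ref{prop:flathol} as a tuple $(\omega_v)_{v\in\V}$ of elements of $\g$, flatness gives $\rank\{\omega_u,\omega_v\}\le1$ for every edge $\{u,v\}\in\E$. I would then take $\W:=\{v\in\V\mid\omega_v\ne0\}$ to be the support of $\omega$; by construction $\omega_v=0$ for $v\in\overline{\W}$, which is the first defining condition of $S_\W$ in \eqref{eq:sw}. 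It remains to check that $\rank\{\omega_v\}_{v\in\W_i}\le1$ for each connected component $\W_i$ of $\G_\W$: for $u,v\in\W_i$ choose a path $u=v_0,v_1,\dots,v_k=v$ in $\G_\W$; each $\{v_j,v_{j+1}\}$ is an edge of $\G$, each $\omega_{v_j}$ is nonzero (its index lies in $\W$), and $\rank\{\omega_{v_j},\omega_{v_{j+1}}\}\le1$ forces $\omega_{v_{j+1}}\in\C^\times\,\omega_{v_j}$; chaining these relations shows $\omega_v\in\C^\times\,\omega_u$. Hence all the vectors $\omega_v$ with $v\in\W_i$ lie on a single line, so $\rank\{\omega_v\}_{v\in\W_i}\le1$, and therefore $\omega\in S_\W$. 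Combined with the first part, this gives \eqref{eq:flat raag}.

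I do not expect a serious obstacle here. The single place where the hypothesis $\g=\sl_2$ or $\sol_2$ is genuinely used is the passage from commuting to rank $\le1$ via Remark~\ref{rem:comm}; for a general Lie algebra (for instance a higher-rank semisimple one) commuting pairs need not have rank $\le1$, so the reverse inclusion is expected to fail, and one should produce an explicit example witnessing strict containment. The only mildly delicate bookkeeping is the path argument on the components of $\G_\W$, but it becomes routine once one records that every vertex of $\W$ carries a nonzero label $\omega_v$, so that the successive rank-$\le1$ conditions along a path compose to a genuine scalar relation between the endpoints.
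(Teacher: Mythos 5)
Your proposal is correct and follows essentially the same route as the paper: the containment is deduced from Lemma~\ref{lem:sw} together with the observation $\F(A_{\C^\W},\g)=\F(A^\hdot_\G,\g)\cap(\C^\W\otimes\g)$ from Lemma~\ref{lem:flat lin}, and the equality for $\g=\sl_2$ or $\sol_2$ is obtained by taking $\W=\supp(\omega)$ and chaining the rank-$\le 1$ condition of Remark~\ref{rem:comm} along edge-paths in each component of $\G_\W$. Your explicit remark that the nonvanishing of each $\omega_{v_j}$ is what lets the pairwise rank conditions compose into a single scalar relation is a slightly more careful rendering of the paper's terser path argument, but the substance is identical.
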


\begin{proof}
The first claim follows at once from Lemmas \ref{lem:flat lin}
and \ref{lem:sw}.

To prove the second claim, let $\omega \in \C^\V\otimes \g$ be an
arbitrary element, let $\supp(\omega)=\{v \in \V \mid \omega_v \neq 0\}$
be its support, and let $\V=\W_1 \coprod  \cdots \coprod \W_c\coprod \overline{\W}$
be the partition associated to $\W=\supp(\omega)$.
By construction, $\omega_v=0$ for all $v \in \overline{\W}$.
For each $1\le i\le c$ and each $u,v \in \W_i$, consider an
edge-path in $\G_{\W_i}$ connecting $u$ to $v$. If
$\omega \in \Hom_{\Lie}(\h(\G), \g)$, it follows from Remark \ref{rem:comm}
that $\rank\{\omega_u,\omega_v\}\le 1$. Hence, $\omega \in S_{\W}$.
\end{proof}

\subsection{Higher-rank resonance}
\label{subsec:higher raag}
Let $\theta\colon \g\to \gl(V)$ be a finite-dimensional representation
of a finite-dimensional Lie algebra $\g$. We now describe a similar
formula for the higher-rank resonance varieties
$\RR_1^1(A^\hdot_\G, \theta)$,
which makes explicit the decomposition from Theorem \ref{thm:rincl}.

Given a subset $\W \subseteq \V$, put
\begin{equation}
\label{eq:pw}
P_\W = \left\{\omega\in \C^{\V} \otimes \g \left|
\begin{array}{ll}
\omega_v=0  & \text{if $v \in \overline{\W}$}
\\
\text{$\omega_v=\lambda_v g_{\W}$, where $\lambda_v \in \C$ and
$g_{\W} \in V(\det \circ  \theta)$}
& \text{if $v \in \W$}
\end{array}
\!\!\right.\right\}.
\end{equation}

Clearly, $P_{\W}$ is a Zariski-closed subset of $\C^{\V} \otimes \g$.
Since the element $g_{\W} \in V(\det \circ  \theta)$ is
independent of the vertex $v\in \W$, we have that
$\rank\{\omega_v\}_{v \in \W} \leq 1$,
for every $\omega\in P_\W$.  Thus, $P_\W\subseteq S_\W$.

\begin{theorem}
\label{thm:raagr}
Let $\g=\sl_2$ or $\sol_2$.  If $\G=(\V,\E)$ is a finite, simplicial graph,
and $V_{\theta}$ is a finite-dimensional $\g$-module, then
\begin{equation}
\label{eq:res raag}
\RR_1^1(A^{\hdot}_\G, \theta)=
\bigcup_{\substack{\W\subseteq \V\\ c(\W)=1}}P_\W \cup
\bigcup_{\substack{\W\subseteq \V\\ c(\W)>1}}S_\W.
\end{equation}
\end{theorem}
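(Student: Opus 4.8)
The plan is to apply the general structural result, Theorem~\ref{thm:rincl}, to the cohomology algebra $A = A^\hdot_\G$ with zero differential, using the known linear decomposition \eqref{eq:r11 ag} of $\RR^1_1(A)$ indexed by subsets $\W\subseteq\V$ with $c(\W)>1$. First I would identify, for each such $\W$, the linear component $C = \C^\W$ with the sub-$\cdga$ $A_C = A_{\C^\W}$ appearing in Theorem~\ref{thm:rincl}, so that the right-hand side of \eqref{eq:r11atheta} reads
\[
\RR_1^1(A^\hdot_\G, \theta)=\Pi(A^\hdot_\G, \theta) \cup \bigcup_{\substack{\W\subseteq\V\\ c(\W)>1}} \F(A_{\C^\W}, \g).
\]
The goal is then to massage this expression into the stated form \eqref{eq:res raag}, which is a union of the $P_\W$ over $\W$ with $c(\W)=1$ together with the $S_\W$ over $\W$ with $c(\W)>1$.

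The key step is a matching of the two decompositions. For $\W$ with $c(\W)>1$, Proposition~\ref{prop:raagf} applied to the graph $\G_\W$ (equivalently, Lemma~\ref{lem:sw}\eqref{w2} together with the equality case of Proposition~\ref{prop:raagf}) shows $\F(A_{\C^\W}, \g)$ is exactly the union of the $S_{\W'}$ over subsets $\W'\subseteq\W$; but the $S_{\W'}$ with $c(\W')>1$ already appear in the right-hand side, and the ones with $c(\W')=1$ are contained in $\F^1(A_{\C^{\W'}},\g)$ by Lemma~\ref{lem:sw}\eqref{w1}, hence absorbed into $\Pi$-type pieces once the determinant condition is imposed by resonance. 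So I expect the $c(\W)>1$ part of \eqref{eq:res raag} to come out directly. The more delicate matching is the $\Pi(A^\hdot_\G,\theta)$ term: I would show $\Pi(A^\hdot_\G,\theta)=\bigcup_{c(\W)=1} P_\W$. By definition $\Pi(A,\theta) = P(H^1(A)\times V(\det\circ\theta))$ consists of tensors $\eta\otimes g$ with $g\in V(\det\circ\theta)$; writing $\eta\in\C^\V$ and letting $\W = \supp(\eta)$, if $c(\W)=1$ this lands in $P_\W$, and if $c(\W)>1$ then $\eta\otimes g\in S_\W$ (indeed in $P_\W$) which is already in the right-hand side of \eqref{eq:res raag}. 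Conversely each $P_\W$ with $c(\W)=1$ consists of tensors $\eta\otimes g_\W$ with $g_\W\in V(\det\circ\theta)$, hence lies in $\Pi(A^\hdot_\G,\theta)$ provided one checks such $\omega$ is actually a flat connection — which holds because $\F^1(A,\g)\subseteq\F(A,\g)$ always, and $\supp(\eta)$ with $c=1$ forces no edge constraints to be violated (any edge inside $\W$ has both endpoints carrying proportional elements).

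I would organize the write-up as: (1) invoke Theorem~\ref{thm:rincl} to get the displayed intermediate formula; (2) for $c(\W)>1$, rewrite $\F(A_{\C^\W},\g)$ via Proposition~\ref{prop:raagf} for the induced subgraph and observe that the resulting $S_{\W'}$ pieces are either of the claimed type ($c(\W')>1$) or subsumed in $\Pi$ ($c(\W')=1$, where the resonance condition is automatic by Corollary~\ref{cor:vdet} since $\C^{\W'}\subseteq\RR^1_1(A)$ and the eigenvalue $0$ always occurs); (3) prove $\Pi(A^\hdot_\G,\theta)\cap\{c(\supp)=1\} = \bigcup_{c(\W)=1}P_\W$ by the support argument above, and note the $c(\supp)>1$ part of $\Pi$ is absorbed; (4) conclude both inclusions.

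The main obstacle I anticipate is bookkeeping the overlaps: the families $\{S_\W\}$ and $\{P_\W\}$ are not disjoint, $P_\W\subseteq S_\W$, and an element of $\F(A_{\C^\W},\g)$ for one $\W$ may equal an element of $S_{\W'}$ for a different $\W'$ (with a smaller support), so the two unions in \eqref{eq:res raag} could a priori miss or double-count contributions. The clean way to handle this is to argue inclusion-by-inclusion on the level of individual flat connections $\omega$, classifying $\omega$ by whether it is essentially rank $1$ (reducing to $\Pi$ and the $c(\W)=1$ case via Corollary~\ref{cor:vdet} and the support of $\omega$) or genuinely higher rank (reducing to $S_\W$ with $c(\W)>1$ via the isotropy computation in the proof of Proposition~\ref{prop:feq} and the fact that a higher-rank isotropic subspace of $A^1$ must lie in a single component $\C^\W$), rather than trying to manipulate the set-theoretic unions symbolically.
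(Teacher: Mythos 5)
Your overall route is the paper's: the forward inclusion comes from Theorem~\ref{thm:rincl} together with $P_\W \subseteq \Pi(A^\hdot_\G,\theta)$ and $S_\W \subseteq \F(A_{\C^\W},\g)$ for $c(\W)>1$ (Lemma~\ref{lem:sw}), and the ``clean way'' you settle on at the end --- classify each $\omega\in\RR^1_1(A^\hdot_\G,\theta)$ as regular (hence in some $S_\W$ with $c(\W)>1$, by Proposition~\ref{prop:raagf} and Lemma~\ref{lem:sw}) or essentially rank one (handled via Corollary~\ref{cor:vdet}/Theorem~\ref{thm:rincl} and the support $\W=\supp(\omega)$) --- is precisely the paper's reverse-inclusion argument. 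So the proof goes through as finally organized.

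One intermediate claim in your step (2) is wrong and should not survive into the write-up: the pieces $S_{\W'}$ with $\W'\subseteq\W$ and $c(\W')=1$ that occur inside $\F(A_{\C^\W},\g)$ for $c(\W)>1$ are \emph{not} ``absorbed into $\Pi$-type pieces once the determinant condition is imposed.'' Their elements are $\eta\otimes g$ with $g\in\g$ arbitrary; when $V(\det\circ\theta)=V(\det)$ (Lemma~\ref{lem:det}) most such $g$ fail the determinant condition, so these elements lie in no $P_{\W''}$. They nevertheless belong to the right-hand side of \eqref{eq:res raag} because $S_{\W'}\subseteq S_{\W}$ (immediate from \eqref{eq:sw}: the support is contained in $\W$ and an essentially rank-one tuple has rank $\le 1$ on every component), i.e.\ they are swallowed by the big $S_\W$ itself, not by $\Pi$. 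Relatedly, their membership in $\RR^1_1(A^\hdot_\G,\theta)$ is automatic not because ``the eigenvalue $0$ always occurs'' (it need not, for $\g=\sl_2$ and $\theta(g)$ invertible), but because $\lambda\eta\in\C^{\W'}\subseteq\C^{\W}\subseteq\RR^1_1(A^\hdot_\G)$ for \emph{every} scalar $\lambda$, so Corollary~\ref{cor:vdet} applies with any eigenvalue. With that correction, your element-wise classification closes both inclusions and coincides with the paper's proof.
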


\begin{proof}
Let $A=A^{\hdot}_\G$, equipped as before with the zero
differential.  In view of Theorem \ref{thm:rincl}, the right-hand side of
\eqref{eq:res raag} is  included in $\RR_1^1(A, \theta)$.  Indeed, we
have that $P_\W \subseteq \Pi(A, \theta)$, for any subset
$\W \subseteq \V$, by construction.  Moreover,
$S_\W \subseteq \F(A_{\C^\W}, \g)$, for any subset $\W \subseteq \V$
with $c(\W)>1$, by Lemma \ref{lem:sw}.

To prove the reverse inclusion, let $\omega$ be a nonzero element in
$\RR_1^1 (A, \theta) \subseteq \F(A, \g)$. By Proposition \ref{prop:raagf}, 
we have that $\omega \in S_\W$, for some $\W\subseteq \V$.
If $\omega$ is regular, then necessarily $c(\W)>1$, since otherwise 
$S_\W \subseteq \F^1(A, \g)$, by  Lemma \ref{lem:sw}. Thus, in order 
to finish the proof, we must show the following:
\begin{equation}
\label{eq:implies}
\omega \in \F^1(A, \g) \setminus \bigcup_{c(\W)>1}S_\W \  \Rightarrow \ 
\omega \in \bigcup_{c(\W)=1}P_\W.
\end{equation}

Set $\W=\supp(\omega)$. As we saw in the proof of Proposition \ref{prop:raagf},
$\omega \in S_\W$, and so the induced subgraph $\G_\W$ is connected. 
Theorem \ref{thm:rincl} tells us that either $\omega \in \Pi (A, \theta)$, or 
$\omega \in P(\C^{\W'} \times \g)$, for some subset $\W'\subseteq \V$ with 
$c(\W')>1$.  In the first case, it follows that $\omega \in P_{\W}$, and we 
are done. In the second case, we must have $\W \subseteq \W'$, and so 
$\W \subseteq \W'_i$, for some $1\le i \le c(\G_{\W'})$. Since 
$\omega \in \F^1(A, \g)$ and $\W=\supp(\omega)$, this implies
$\omega \in S_{\W'}$, a contradiction.
\end{proof}

\subsection{Irreducible components}
\label{subsec:irred}
In the case when $\g=\sl_2$, we may use Proposition \ref{prop:raagf} and
Theorem \ref{thm:raagr} to find the decompositions of $\F(A^\hdot_\G, \g)$
and $\RR^1_1(A^\hdot_\G, \theta)$ into irreducible components.

To that end, we need to introduce a partial order among the subsets
of the vertex set $\V$.  For each inclusion $\W \subseteq \W'$
of vertex subsets, let
$K_{\W \W'} \colon \{ \W_1, \dots, \W_c \}\to \{ \W'_1, \dots, \W'_{c'} \}$
be the induced map from the connected components of $\G_{\W}$
to the connected components of $\G_{\W'}$.  The order relation on
the subsets of $\V$ is then given by
\begin{equation}
\label{eq:prec}
\text{$\W \preccurlyeq  \W' \Leftrightarrow \W \subseteq \W'$
and $K_{\W \W'}$ is injective.}
\end{equation}
Clearly, if $c(\W) > 1$ and $c(\W')=1$, then $\W \npreccurlyeq \W'$. Furthermore,
if $c(\W)=1$, then $\W \preccurlyeq \W'$ if and only if $\W \subseteq \W'$.

\begin{lemma}
\label{lem:cases}
With notation as above, the following hold.
\begin{enumerate}
\item \label{s1}
$S_\W \subseteq S_{\W'} \Leftrightarrow \W  \preccurlyeq \W'$,
for all $\W, \W'$.
\item \label{s2}
$S_\W \nsubseteq P_{\W'}$, provided $c(\W)>1$ and $c(\W')=1$.
\item \label{s3}
$P_\W \subseteq P_{\W'} \Leftrightarrow \W \preccurlyeq \W'$, provided
$c(\W)=c(\W')=1$.
\item \label{s4}
$P_{\W} \subseteq S_{\W'} \Leftrightarrow \W  \preccurlyeq \W'$, provided
$c(\W)=1$ and $c(\W')>1$.
\end{enumerate}
\end{lemma}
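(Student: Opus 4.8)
The plan is to verify all four equivalences by the same two-pronged method: each ``if'' direction is a direct check against the definitions of $S_\W$ and $P_\W$, and each ``only if'' direction is proved by producing an explicit connection in the smaller variety that fails to lie in the larger one. Throughout I use two elementary facts: $\g$ contains two linearly independent elements, and $V(\det\circ\theta)$ contains a nonzero element (both clear for $\g=\sl_2$ or $\sol_2$, since $V(\det\circ\theta)$ always contains a nonzero nilpotent). I also recall, from the remarks following \eqref{eq:prec}, that when $c(\W)=1$ the relation $\W\preccurlyeq\W'$ coincides with $\W\subseteq\W'$.

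Before treating \eqref{s1} I would record the key combinatorial fact: when $\W\subseteq\W'$, each component $\W_i$ of $\G_\W$, being connected in the induced subgraph $\G_{\W'}$, lies in a single component $K_{\W\W'}(\W_i)$ of $\G_{\W'}$, so that $\W'_j\cap\W=\bigsqcup_{K_{\W\W'}(\W_i)=\W'_j}\W_i$; in particular $K_{\W\W'}$ is injective precisely when each component of $\G_{\W'}$ meets $\W$ in at most one component of $\G_\W$. Granting this, \eqref{s1} goes as follows. If $\W\preccurlyeq\W'$ and $\omega\in S_\W$, then $\omega$ vanishes on $\overline{\W'}\subseteq\overline{\W}$, and on each component $\W'_j$ of $\G_{\W'}$ the coordinates of $\omega$ are supported on at most one $\W_i$ (the rest being $0$), hence have rank $\le 1$; so $\omega\in S_{\W'}$. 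Conversely, if $S_\W\subseteq S_{\W'}$: the connection with a single nonzero coordinate, placed at a vertex $v_0\in\W\setminus\W'$, lies in $S_\W$ but not in $S_{\W'}$, so $\W\subseteq\W'$; and if $K_{\W\W'}$ collapsed distinct components $\W_i,\W_j$ onto the same $\W'_k$, then putting two linearly independent elements of $\g$ on a vertex of $\W_i$ and a vertex of $\W_j$ (and $0$ elsewhere) gives an element of $S_\W$ whose restriction to $\W'_k$ has rank $2$, a contradiction. Hence $\W\preccurlyeq\W'$.

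Parts \eqref{s2}--\eqref{s4} follow the same pattern, now with the auxiliary role played by the defining constraint of $P_\W$ (all coordinates proportional to a single element of $V(\det\circ\theta)$, hence global rank $\le 1$). For \eqref{s2}, since $c(\W)>1$ the rank-$2$ connection constructed above lies in $S_\W$ but not in any $P_{\W'}$. For \eqref{s3} and \eqref{s4}, the ``if'' directions check that a connection $\omega_v=\lambda_v g_\W$ supported on $\W$ retains the required form after enlarging to $\W'$ — using, in \eqref{s4}, that $c(\W)=1$ forces $\W$ into a single component $\W'_j$ of $\G_{\W'}$ — while the ``only if'' directions again detect $\W\not\subseteq\W'$ by placing a nonzero element of $V(\det\circ\theta)$ at a vertex of $\W\setminus\W'$.

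The only step that requires real care is the forward implication in \eqref{s1}: recovering the injectivity of $K_{\W\W'}$ from the set-theoretic inclusion $S_\W\subseteq S_{\W'}$ hinges on the rank-$2$ witness together with the component bookkeeping recorded above. Everything else reduces to tracking supports and one-dimensionality, where I expect no obstacle.
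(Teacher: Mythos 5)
Your proof is correct and follows essentially the same route as the paper's: support arguments plus explicit witness connections (a single nonzero coordinate for $\W\not\subseteq\W'$, a rank-$2$ pair on collapsed components for injectivity of $K_{\W\W'}$, and a nonzero element of $V(\det\circ\theta)$ for the $P_\W$ cases). Your handling of \eqref{s2} via the global rank bound on $P_{\W'}$, and of \eqref{s3}--\eqref{s4} without the case split on whether $V(\det\circ\theta)=\g$ or $V(\det)$, is a minor streamlining of the paper's argument but not a different method.
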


\begin{proof}
We start with part \eqref{s1}.
The implication $\W \preccurlyeq \W' \Rightarrow S_{\W} \subseteq S_{\W'}$
is immediate. Conversely, suppose $S_{\W} \subseteq S_{\W'}$.
Then clearly $\W \subseteq \W'$, by a support argument.
It remains to show that $K_{\W\W'}$ is injective.
If that were not the case, there would be two distinct connected
components of $\G_{\W}$ such that their vertex sets, call them
$\W_1$ and $\W_2$,  are both included in a component $\W'_1$ of $\G_{\W'}$.
Now pick $v_i \in \W_i$ and $g_i \in \g$ such that $\rank \{ g_1, g_2 \}=2$,
and define an element $\omega\in \C^{\V}\otimes \g$ by setting
$\omega_{v_i}=g_i$ and $\omega_v=0$, otherwise.
We then have $\omega \in S_{\W} \setminus S_{\W'}$,
a contradiction.

To prove part \eqref{s2}, suppose that $S_{\W} \subseteq P_{\W'}$.
Then, since $P_{\W'} \subseteq S_{\W'}$, we must have
$S_{\W} \subseteq S_{\W'}$. By part \eqref{s1}, this implies
$\W \preccurlyeq \W'$, and so $K_{\W\W'}$ must be injective,
contradicting the assumption that $c(\W)>c(\W')$.

Before proceeding with the last two parts, recall from Lemma \ref{lem:det}
that there are two possibilities for the variety $V(\det \circ \theta)$:
it is either equal to $\g$, or to $V(\det)$.
In the first case, we have that $P_\W=S_\W$, for all $\W\subseteq \V$
with $c(\W)=1$, and so parts \eqref{s3} and \eqref{s4} follow at once
from part \eqref{s1}.
In the second case, let us pick a non-zero matrix $g \in V(\det)$,
and prove parts \eqref{s3} and \eqref{s4} separately.

For part \eqref{s3}, suppose that $\W \subseteq \W'$. Since we
are assuming that $c(\W)=c(\W')=1$, we must have
$P_\W \subseteq P_{\W'}$, by part \eqref{s1}.
Conversely, suppose $\W \not\subseteq \W'$.
Then pick $v \in \W \setminus \W'$, and define an element
$\omega\in \C^{\V}\otimes \g$ by setting $\omega_v=g$ and $\omega_u=0$
for $u \neq v$. Clearly, $\omega \in P_\W \setminus P_{\W'}$,
and so $P_\W \not\subseteq P_{\W'}$.

For part \eqref{s4}, suppose that $\W \subseteq \W'$.  Then,
since $c(\W)=1$, we must have $P_\W \subseteq S_{\W'}$,
again by part \eqref{s1}.   Conversely, suppose $v\in \W \setminus \W'$.
If we set $\omega_v=g$ and $\omega_u=0$ for $u \neq v$, we then
have $\omega \in  P_\W \setminus S_{\W'}$, and this finishes the proof.
\end{proof}

\begin{corollary}
\label{cor:irred}
Let $\G$ be a finite, simplicial graph, and let $\theta\colon \sl_2\to \gl(V)$ be a
finite-dimensional representation. We then have the following decompositions into
irreducible components:
\begin{align}
\label{eq:irred raag}
\F(A^\hdot_\G, \sl_2)&=
\bigcup_{\text{$\W$ $\preccurlyeq$-maximal}} S_{\W},
\\
\RR_1^1(A^{\hdot}_\G, \theta)&=
\bigcup_{\substack{c(\W)=1\\ \text{$\W$ $\preccurlyeq$-maximal}}} P_\W
\cup  \bigcup_{\substack{c(\W)>1 \\
\text{$\not\exists \W\precneqq\W'$ with $c(\W')>1$}}} S_\W.
\end{align}
\end{corollary}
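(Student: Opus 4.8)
The plan is to combine Proposition~\ref{prop:raagf} and Theorem~\ref{thm:raagr}, which already give presentations of $\F(A^\hdot_\G,\sl_2)$ and $\RR_1^1(A^\hdot_\G,\theta)$ as finite unions of the closed, irreducible subvarieties $S_\W$ (respectively $P_\W$ and $S_\W$), with the combinatorial comparison statements of Lemma~\ref{lem:cases}. Since each $S_\W$ and each $P_\W$ is Zariski-closed and irreducible (being a product of cones over projective varieties, cf.~\eqref{eq:prodcone}, and, for $P_\W$, an analogous description with $V(\det\circ\theta)$ in place of $\g$), extracting the decomposition into irreducible components amounts purely to discarding those members of the given finite union that are contained in another member. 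Thus the task reduces to identifying, in each of the two posets of pieces, the ``maximal'' elements under inclusion of subvarieties, and translating that into the combinatorial order $\preccurlyeq$.

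For the first formula, $\F(A^\hdot_\G,\sl_2)=\bigcup_{\W\subseteq\V}S_\W$, part~\eqref{s1} of Lemma~\ref{lem:cases} says $S_\W\subseteq S_{\W'}\iff\W\preccurlyeq\W'$. Hence the redundant pieces are exactly those $S_\W$ with $\W$ not $\preccurlyeq$-maximal, and removing them yields $\F(A^\hdot_\G,\sl_2)=\bigcup_{\W\ \preccurlyeq\text{-maximal}}S_\W$; no two distinct $\preccurlyeq$-maximal $\W$ give nested $S_\W$, so this is genuinely the irreducible decomposition. For the second formula, $\RR_1^1(A^\hdot_\G,\theta)=\bigcup_{c(\W)=1}P_\W\cup\bigcup_{c(\W)>1}S_\W$, I would run the same pruning, now using all four parts of Lemma~\ref{lem:cases}: among the $P_\W$ with $c(\W)=1$, inclusion is governed by $\preccurlyeq$ via~\eqref{s3}, so only $\preccurlyeq$-maximal such $\W$ survive; a piece $P_\W$ ($c(\W)=1$) can also be swallowed by some $S_{\W'}$ with $c(\W')>1$, by~\eqref{s4}, precisely when $\W\preccurlyeq\W'$, so one must further discard those $P_\W$ for which such a $\W'$ exists—but note that if $\W\subseteq\W'$ and $c(\W)=1$ then automatically $\W\preccurlyeq\W'$, so the surviving $P_\W$ are those $\W$ with $c(\W)=1$ that are $\preccurlyeq$-maximal \emph{and} not contained in any $\W'$ with $c(\W')>1$; among the $S_\W$ with $c(\W)>1$, part~\eqref{s2} guarantees none is contained in any $P_{\W'}$, and part~\eqref{s1} says $S_\W\subseteq S_{\W'}$ forces $\W\preccurlyeq\W'$, which (when $c(\W)>1$) forces $c(\W')>1$ as well; so the surviving $S_\W$ are those with $c(\W)>1$ admitting no $\W\precneqq\W'$ with $c(\W')>1$. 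This matches the stated right-hand side, and one checks the remaining pieces are pairwise incomparable, hence irreducible components.

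The one point requiring care—which I regard as the main (though modest) obstacle—is the bookkeeping that reconciles the two indexing conditions for the surviving $P_\W$: showing that ``$\W$ is $\preccurlyeq$-maximal among $c(\cdot)=1$ subsets and not $\preccurlyeq$-below any $c(\cdot)>1$ subset'' is equivalent to ``$\W$ is $\preccurlyeq$-maximal in $\V$ (unconditionally) with $c(\W)=1$'', using the observation after~\eqref{eq:prec} that for $c(\W)=1$ one has $\W\preccurlyeq\W'\iff\W\subseteq\W'$. That remark also makes the statement~\eqref{s4} and the nestedness checks essentially immediate. Everything else is a direct application of the cited lemma, together with the irreducibility and closedness of each $S_\W$ and $P_\W$ recorded in Section~\ref{subsec:flat raag} and Section~\ref{subsec:higher raag}; in particular no new geometric input is needed beyond~\eqref{eq:prodcone} and the analogous description of $P_\W$.
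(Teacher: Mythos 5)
Your proof is correct and follows essentially the same route as the paper: both note that each $S_\W$ and $P_\W$ is an irreducible, Zariski-closed subvariety (via \eqref{eq:prodcone} and Lemma~\ref{lem:det}) and then prune the unions from Proposition~\ref{prop:raagf} and Theorem~\ref{thm:raagr} using the containment criteria of Lemma~\ref{lem:cases}. The only difference is one of exposition: the paper compresses the combinatorics into ``using Lemma~\ref{lem:cases} to eliminate redundancies,'' while you carry out that bookkeeping explicitly, including the reconciliation of the two descriptions of the surviving $P_\W$'s via the remark following \eqref{eq:prec}.
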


\begin{proof}
For any subset $\W$ of the vertex set $\V$, both $P_\W$ and $S_\W$
are irreducible, Zariski-closed subsets of  $\F(A^\hdot_\G, \sl_2)$.
Indeed, the first set equals $P(\C ^\W \times V(\det \circ \theta))$,
which is irreducible by Lemma \ref{lem:det}, while the second
set equals $\prod_{i=1}^{c(\W)} P(\C^{\W_i} \times \sl_2)$, which
is clearly irreducible.
Using Lemma \ref{lem:cases} to eliminate redundancies
in \eqref{eq:flat raag} and \eqref{eq:res raag} completes the proof.
\end{proof}

\subsection{Semisimple Lie algebras}
\label{subsec:limits}
Our emphasis on the $\sl_2$ case comes from the fact that the
inclusion from Proposition \ref{prop:raagf} may be strict
for arbitrary Lie algebras $\g$ and graphs $\G$. More precisely,
we have the following result.

\begin{prop}
\label{prop:semi}
Suppose $\g$ is a semisimple Lie algebra, different from $\sl_2$.
There is then a connected, finite simple graph $\G$ such that
$\F(A^\hdot_\G, \g)\ne \bigcup_{\W \subseteq \V} S_{\W}$.
\end{prop}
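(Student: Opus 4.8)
The plan is to show that the inclusion $\F(A^\hdot_\G,\g)\supseteq\bigcup_{\W\subseteq\V}S_\W$ of Proposition \ref{prop:raagf} is strict for a suitably chosen graph, by exhibiting an explicit flat connection lying outside every $S_\W$. The underlying mechanism is the one isolated in Remark \ref{rem:comm}: for $\g=\sl_2$ or $\sol_2$, two commuting elements of $\g$ necessarily span a subspace of dimension at most $1$, which is precisely the constraint imposed, component by component, in the definition \eqref{eq:sw} of $S_\W$. As soon as $\g$ admits two linearly independent commuting elements, placing them on two adjacent vertices of $\G$ will produce a flat connection that no $S_\W$ can contain.

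First I would record the relevant piece of structure theory: a semisimple Lie algebra $\g$ not isomorphic to $\sl_2$ has rank at least $2$. Indeed, the only simple Lie algebra of rank $1$ is of type $A_1$ (that is, $\sl_2$), while a semisimple Lie algebra with two or more simple summands has rank $\ge 1+1=2$, since rank is additive under direct sums. Fixing a Cartan subalgebra $\cart\subseteq\g$, we therefore have $\dim_\C\cart=\rank\g\ge 2$, so we may choose linearly independent, nonzero elements $g_1,g_2\in\cart$; these satisfy $[g_1,g_2]=0$ while $\rank\{g_1,g_2\}=2$.

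Next, let $\G$ be the graph with two vertices $u,v$ and a single edge $\{u,v\}$; this is a connected, finite, simple graph. By \eqref{eq:holo artin} and Proposition \ref{prop:flathol}, an element $\omega\in\C^\V\otimes\g=\Hom(\C\langle\V\rangle,\g)$ lies in $\F(A^\hdot_\G,\g)$ if and only if $[\omega_u,\omega_v]=0$; so setting $\omega_u=g_1$ and $\omega_v=g_2$ gives $\omega\in\F(A^\hdot_\G,\g)$. On the other hand, $\supp(\omega)=\{u,v\}=\V$, so $\omega\in S_\W$ for some $\W$ would force $\V=\supp(\omega)\subseteq\W$, i.e. $\W=\V$; but $\G_\V=\G$ is connected, so the defining condition of $S_\V$ in \eqref{eq:sw} is exactly $\rank\{\omega_u,\omega_v\}\le 1$, which fails. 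Hence $\omega\notin\bigcup_{\W\subseteq\V}S_\W$, and the inclusion of Proposition \ref{prop:raagf} is strict for this $\G$. (The same configuration — $g_1,g_2$ on two adjacent vertices and $0$ elsewhere — works verbatim on any connected graph with at least one edge, since then $\supp(\omega)=\{u,v\}$ and any $S_\W$ containing $\omega$ would again have to impose $\rank\{\omega_u,\omega_v\}\le 1$.)

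The argument has essentially no moving parts beyond the definition of $S_\W$; the one genuine input, and the step I would flag as carrying the content, is the Lie-theoretic fact that a semisimple Lie algebra other than $\sl_2$ contains two linearly independent commuting elements. This is what separates the ``commuting along edges'' condition cutting out $\F(A^\hdot_\G,\g)$ from the strictly stronger ``rank $\le 1$ on each component'' condition cutting out $\bigcup_\W S_\W$, and it is exactly the property that fails for $\sl_2$ and $\sol_2$, in agreement with the equality part of Proposition \ref{prop:raagf}.
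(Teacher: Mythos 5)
Your proof is correct, and its closing step---$\supp(\omega)=\V$ forces $\W=\V$, after which connectedness of $\G$ makes the defining condition of $S_{\V}$ read $\rank\{\omega_v\}_{v\in\V}\le 1$, contradicting $\rank\{g_1,g_2\}=2$---is exactly the mechanism the paper uses (phrased there as: the element is regular, yet Lemma \ref{lem:sw}\eqref{w1} would force it into $\F^1(A^\hdot_\G,\g)$). Where you genuinely diverge is in the construction of the witness $(\G,\omega)$. The paper works with the root space decomposition of $\g$: it places root vectors $g_{\pm\alpha_i}$ (and, in rank $2$, also $g_{\pm\gamma}$ for a highest root $\gamma$) on the vertices of a graph assembled from the commutation relations $[g_{\alpha_i},g_{-\alpha_j}]=0$ for $i\ne j$, with a three-way case analysis on $r=\rank\g$ and on whether $\g=\sl_2\times\sl_2$. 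You instead use only the fact that $\rank\g\ge 2$, so a Cartan subalgebra $\cart$ contains two linearly independent commuting elements, and you put them on the two vertices of a single edge. This is shorter, avoids all case analysis, and isolates the precise dividing line: the equality in Proposition \ref{prop:raagf} for $\sl_2$ and $\sol_2$ rests on Remark \ref{rem:comm} (commuting implies rank $\le 1$), and your example is built from the failure of exactly that property. What the paper's longer construction buys is a witness supported on all of a larger, root-theoretically meaningful vertex set, but nothing of that is needed for the statement as given. Your parenthetical extension to an arbitrary connected graph with at least one edge is also correct, since the two chosen vertices then lie in a common component of $\G_{\W}$ for any admissible $\W$.
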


\begin{proof}
For each such $\g$, we will construct a connected, finite simple
graph $\G$ and a regular
element $\omega \in \F(A^{\hdot}_{\G}, \g)$ with the property that
$\omega _v \neq 0$ for all $v\in \V$. This will
imply  $\omega \notin \bigcup_{\W \subseteq \V} S_{\W}$.
Indeed, if $\omega \in S_{\W}$, then necessarily $\W=\V$, since
$\supp(\omega)=\V$; see Lemma \ref{lem:sw}. More precisely, it follows from
Lemma \ref{lem:sw}\eqref{w1} that $\omega \in \F^1(A^{\hdot}_{\G}, \g)$,
contradicting regularity.

First, we need to recall some standard facts on
semisimple Lie algebras, cf.~\cite{Hu}. Let $r$ be the dimension
of a Cartan subalgebra of $\g$; the assumption that $\g\ne \sl_2$
means that $r>1$. Pick a system of simple roots $\{\alpha_1, \dots, \alpha_r\}$.
For an arbitrary root $\alpha$, pick a generator $g_\alpha$ of the root space
$\g_\alpha \subseteq \g$. Then the family $\{g_\alpha\}_{\alpha}$ is independent,
$[g_{\alpha_i}, g_{-\alpha_j}]=0$ for $i \neq j$, and $[g_\alpha, g_\beta]=0$
if $0 \neq \alpha + \beta$ is not a root. Choose a positive root $\gamma$
of maximal height. Then $[g_{\gamma}, g_{\alpha_i}]=[g_{-\gamma}, g_{-\alpha_i}]=0$,
for all $1 \leq i \leq r$.

If $r>2$, we let $\G$ be the graph with vertex set
$\V=\{\pm \alpha_1, \dots, \pm\alpha_r\}$ and edges
$\{\alpha_i,-\alpha_j\}$ for $i \neq j$. Clearly, $\G$ is connected.
Now set $\omega_\alpha=g_\alpha$ for $\alpha \in \V$.
Then $\omega\in \F(A^{\hdot}_{\G}, \g)$ is regular and
$\supp(\omega)=\V$, as needed.

If $r=2$ and $\g \neq \sl_2 \times \sl_2$, we let
$\V=\{\pm\alpha_1, \pm\alpha_2, \pm\gamma\}$
and declare the edges to be $\{\alpha_i,-\alpha_j\}$ for $i \neq j$ and
$\{\epsilon \alpha_i, \epsilon \gamma\}$, with $\epsilon = \pm 1$,
while if $\g=\sl_2 \times \sl_2$, we let $\V=\{\pm \alpha_1, \pm \alpha_2\}$
and declare the edges to be $\{\alpha_i,-\alpha_j\}$ for $i \neq j$ and
$\{\epsilon \alpha_1, \epsilon \alpha_2\}$, with $\epsilon = \pm 1$.
In both cases, the resulting graph $\G$ is connected, and the
desired form $\omega$ is constructed as before.
\end{proof}

\subsection{Labeled graphs and odd contractions}
\label{subsec:odd}

We now return to the general case of a (finitely generated) Artin group.  
A construction described in \cite[\S 11.9]{DPS} associates to each 
labeled, finite simplicial graph $(\G,\ell)$ an unlabeled graph, $\tilde{\G}$, 
called the odd contraction of $(\G,\ell)$, as follows.  We first define an 
unlabeled graph $\Gamma_{\rm odd}$ by keeping all the vertices of $\G$, 
and retaining only those edges for which the label is odd (after which the 
label is discarded).  We then let $\tilde{\G}$ be the graph whose vertices 
correspond to the connected components of $\Gamma_{\rm odd}$, with 
two distinct components determining an edge $\{c,c'\}$ in  $\tilde{\G}$ 
if and only if there exist vertices $v\in c$ and $v'\in c'$ which 
are connected by an edge in $\G$.

Let $\pi_{\G, \ell}$ be the Artin group associated to the labeled graph $(\G,\ell)$, 
and let $\pi_{\tilde{\G}}$ be the right-angled Artin group associated to the odd 
contraction of $(\G,\ell)$.  Furthermore, let $A^{\hdot}_{\G,\ell}=
H^\hdot (\pi_{\G, \ell},\C)$ and $A^{\hdot}_{\tilde{\G}}=H^\hdot (\pi_{\tilde{\G}},\C)$ 
be the respective cohomology algebras, both endowed with the zero differential. 

\begin{prop}
\label{prop:gamma ell}
For each finite-dimensional Lie algebra $\g$ and each 
finite-dimensional representation $\theta\colon \g\to \gl(V)$, 
there is an isomorphism
\[
(\F(A^\hdot_{\G,\ell}, \g), \RR_1^1(A^{\hdot}_{\G,\ell}, \theta))
\cong (\F(A^\hdot_{\tilde{\G}}, \g), \RR_1^1(A^{\hdot}_{\tilde{\G}}, \theta))
\]
between the corresponding pairs of affine varieties.
\end{prop}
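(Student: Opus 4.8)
The plan is to transport the whole statement to the setting of holonomy Lie algebras, where a single algebraic object governs both the parameter space of flat connections and the depth-one, degree-one resonance variety. By Corollary \ref{cor:lie res}, for any $1$-finite \cdga\ $B$ and any representation $\theta\colon \g\to \gl(V)$, the canonical isomorphism $\psi\colon \F(B,\g)\isom \Hom_{\Lie}(\h(B),\g)$ carries $\RR^1_1(B,\theta)$ onto $\RR^1_1(\h(B),\theta)$. Feeding in $B=A^\hdot_{\G,\ell}$ and $B=A^\hdot_{\tilde{\G}}$, I am reduced to producing an isomorphism of Lie algebras $\h(A^\hdot_{\G,\ell})\cong \h(A^\hdot_{\tilde{\G}})$: since $\Hom_{\Lie}(-,\g)$ is a functor and the resonance varieties $\RR^1_r(-,\theta)$ are invariant under Lie algebra isomorphisms (Lie algebra cohomology being unchanged under pullback of coefficients along an isomorphism), any such isomorphism automatically produces the asserted isomorphism of pairs.

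Next I would evaluate the two sides. Both cohomology algebras carry the zero differential, so by Remark \ref{rem:holo chen} their holonomy Lie algebras are the classical holonomy Lie algebras of the groups $\pi_{\G,\ell}$ and $\pi_{\tilde{\G}}$. For the right-angled Artin group $\pi_{\tilde{\G}}$, formula \eqref{eq:holo artin} gives $\h(A^\hdot_{\tilde{\G}})=\h(\tilde{\G})$, the quadratic Lie algebra on the vertex set of the odd contraction $\tilde{\G}$, with a relation $[c,c']=0$ for each edge of $\tilde{\G}$. Thus the entire proposition comes down to identifying $\h(A^\hdot_{\G,\ell})$ with $\h(\tilde{\G})$.

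This identification is the crux, and the place where the odd-contraction combinatorics actually enters. On abelianization, the Artin relation across an edge of odd label forces its two endpoints to coincide, whereas an edge of even label imposes no linear relation; hence $A_1=(H^1(\pi_{\G,\ell},\C))^*$ is naturally identified with the free $\C$-vector space on the vertex set of $\tilde{\G}$. I would then invoke the computation of the low-degree cohomology ring of a labeled Artin group, together with the odd-contraction formalism, from \cite{PS-artin} and \cite[\S 11.9]{DPS} to see that, under this identification, the co-restriction of $\cup\colon H^1\wedge H^1\to H^2$ is exactly the map presenting the exterior Stanley--Reisner ring of $\tilde{\G}$; by Lemma \ref{lem:cores} and Definition \ref{def:hold}, this forces $\h(A^\hdot_{\G,\ell})=\L(A_1)/\ideal(\im \cup^*)$ to carry precisely the presentation of $\h(\tilde{\G})$ recorded above. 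This appeal to the known cup-product structure of Artin groups is the only non-formal ingredient, and hence the main obstacle; everything around it is bookkeeping with functoriality.

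Assembling: the Lie algebra isomorphism $\h(A^\hdot_{\G,\ell})\cong \h(\tilde{\G})=\h(A^\hdot_{\tilde{\G}})$ just obtained, composed with the two copies of $\psi$ from the first paragraph, yields an isomorphism of affine varieties $\F(A^\hdot_{\G,\ell},\g)\isom \F(A^\hdot_{\tilde{\G}},\g)$ restricting to $\RR^1_1(A^\hdot_{\G,\ell},\theta)\isom \RR^1_1(A^\hdot_{\tilde{\G}},\theta)$, which is the desired isomorphism of pairs.
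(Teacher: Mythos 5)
Your proposal is correct in outline, and its skeleton is really a repackaging of the paper's own three-line proof: the paper reduces the statement to the fact that $A^\hdot_{\G,\ell}$ and $A^\hdot_{\tilde{\G}}$ have the same co-restrictions of $d$ and $\cup$ in low degrees and then quotes Lemma \ref{lem:cores}, whereas you reduce it to an isomorphism $\h(A^\hdot_{\G,\ell})\cong\h(A^\hdot_{\tilde{\G}})$ and quote Corollary \ref{cor:lie res}; by Definition \ref{def:hold} these two reductions carry exactly the same information, and your functoriality bookkeeping (naturality of $\psi$, invariance of Lie algebra cohomology under isomorphism) is sound. The genuine difference is in how the key input is certified. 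You propose to compute $H_1(\pi_{\G,\ell})$ directly from the abelianized Artin relations (correct, and nicely explicit) and then to read off the co-restriction of $\cup\colon H^1\wedge H^1\to H^2$ from ``the computation of the low-degree cohomology ring of a labeled Artin group'' in \cite{PS-artin} and \cite[\S 11.9]{DPS} --- but \cite{PS-artin} treats only right-angled Artin groups, and the cohomology ring of a general Artin group is not computed in either reference (nor is it easy: even the $K(\pi,1)$ problem is open in general). The paper sidesteps precisely this obstacle: \cite[Lemma 11.11]{DPS} gives an isomorphism of Malcev--Lie algebras, hence of Sullivan $1$-minimal models, and the $1$-minimal model already determines the co-restriction data --- equivalently, by $1$-formality, it determines the holonomy Lie algebra you need, with no cup-product computation required. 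So the step you flag as ``the only non-formal ingredient'' is indeed the gap as you have stated it; it closes immediately if you replace the appeal to a cohomology-ring computation by the Malcev--Lie algebra isomorphism of \cite[Lemma 11.11]{DPS} together with the $1$-formality of Artin groups from \cite{KM}.
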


\begin{proof}
As shown in \cite[Lemma  11.11]{DPS}, the groups $\pi_{\G, \ell}$ and 
$\pi_{\tilde{\G}}$ have the same Malcev--Lie algebra, and thus the 
same Sullivan $1$-minimal model. In particular, the \cdga's 
$A^{\hdot}_{\G,\ell}$ and $A^{\hdot}_{\tilde{\G}}$ have the same 
co-restrictions in low degrees.  Lemma \ref{lem:cores}, then, yields 
the desired isomorphism.
\end{proof}

When combined with Corollary \ref{cor:irred}, this proposition produces  
explicit decompositions into irreducible components for the varieties 
$\F(A^\hdot_{\G,\ell}, \sl_2)$ and $\RR_1^1(A^{\hdot}_{\G,\ell}, \theta)$, 
for any labeled, finite simplicial graph $(\G,\ell)$ and any finite-dimensional 
representation $\theta\colon \sl_2\to \gl(V)$.

\subsection{Germs of representation varieties}
\label{subsec:germs_rep}

To conclude this section, let us compare our computations
with the Kapovich--Millson universality theorem \cite{KM}.     
This striking result (Theorem 1.9 from \cite{KM}) 
says the following: Given a point $x$ on an
affine variety $\XX$ defined  over $\Q$, there is a labeled
finite simplicial graph $(\G, \ell)$ and a (nontrivial) 
representation $\rho \colon \pi_{\G, \ell} \to \PSL_2$ 
with finite image and trivial centralizer 
such that the germ of the GIT 
quotient of the representation variety 
at the class of $\rho$ is isomorphic to the  germ of the given 
singularity; that is, 
\begin{equation}
\label{eq:git}
\big(\Hom (\pi_{\G, \ell}, \PSL_2)/\!/\PSL_2 \big)_{([\rho])} 
\cong \XX_{(x)}.
\end{equation}
Moreover, Kapovich and Millson show in the proof  of 
\cite[Theorem 14.7]{KM} that the representation variety admits 
a local cross-section for the conjugation action 
of $\PSL_2$ at the point $\rho$.  Therefore, 
\begin{equation}
\label{eq:isom km}
\Hom (\pi_{\G, \ell}, \PSL_2)_{(\rho)} \cong \XX_{(x)} \times \C^3_{(0)},
\end{equation}
as analytic germs. 

So what can be said at  the trivial representation?  
By Theorem 17.3 from \cite{KM}, 
the  variety $\Hom (\pi_{\G, \ell}, \PSL_2)$ has at worst 
a quadratic singularity at the origin $\rho=1$.  Our work 
complements this remarkable result, as follows.

Let $\tilde{\G}$ be the odd contraction of $(\G,\ell)$.  It follows from 
Theorem \ref{thm:b1} and Proposition \ref{prop:gamma ell}
that we have a local analytic isomorphism
\begin{equation}
\label{eq:isom sl2}
\Hom (\pi_{\G, \ell}, \PSL_2)_{(1)} \cong
\F(H^\hdot (\pi_{\tilde{\G}},\C), \sl_2)_{(0)}
\end{equation}
which identifies  $\VV_1^1(K(\pi_{\G,\ell},1),\iota)_{(1)}$ with
$\RR_1^1(H^\hdot (\pi_{\tilde{{\G}}},\C), \theta)_{(0)}$, for  every rational
representation $\iota \colon \PSL_2 \to \GL(V)$.  Proposition \ref{prop:raagf} 
then says that the analytic singularity at $1$ of the representation variety 
$\Hom (\pi_{\G, \ell}, \PSL_2)$ can be  completely described in terms 
of the graph $\tilde{\G}$.
Moreover, according to Theorem \ref{thm:raagr}, the same thing happens
with the nonabelian characteristic varieties $\VV^1_1(K(\pi_{\G,\ell},1),\iota)$,
which can be completely described in terms of the graph $\tilde{\G}$ and the 
tangential representation of $\iota$.

\section{Quasi-projective manifolds}
\label{sect:qp}

We devote this last section to the $\g$-resonance varieties
associated to an irreducible, smooth, quasi-projective variety $X$.  
For the most part, we emphasize the case when $X$ is $1$-formal 
and $\g=\sl_2$, a case in which our methods lead to a rather explicit 
description of the variety of $\g$-valued flat connections and of the 
higher-rank resonance varieties, in degree $1$ and depth $1$.

\subsection{Admissible maps onto curves}
\label{subsec:pencils}
Let $X$ be a quasi-projective manifold.
Up to reparametri\-zation at the target, the variety $X$ admits finitely
many {\it admissible}\/ maps $f \colon X \to S$ onto connected,
smooth complex curves with $\chi(S)<0$.  Admissibility means that $f$
is regular, and has connected generic fiber. As shown by Arapura
in \cite{Ar}, the correspondence $f \mapsto f^*(\TT (\pi_1(S)))$ establishes
a bijection between the set $\mathcal{E}_X$ of equivalence classes
of admissible maps from $X$ to a curve $S$ with $\chi(S)<0$ and the set of
positive-dimensional, irreducible components of $\VV^1_1(X)$ containing $1$.

Next, we recall from \cite[Theorem C and Example 5.3]{DP} the relationship
between $\mathcal{E}_X$ and the linear decomposition of $\RR^1_1(A)$, for 
a convenient choice of Gysin model $A^\hdot$ of $X$.
Each admissible map $f$  induces an injective $\cdga$ map
$f^* \colon A^\hdot (S) \hookrightarrow A^\hdot$, where
$A^\hdot (S)$ is the canonical Gysin model of the
curve $S$ and has the property that $A^{>2} (S)=0$. 

Due to the equivalences detailed in \eqref{eq:basept}, we will assume
from now on that $b_1(X)>0$.  In this case, the (linear) irreducible decomposition
of $\RR_1^1(A)$ is given by
\begin{equation}
\label{eq:hpencils}
\RR_1^1(A)= \{ 0\} \cup \bigcup_{f\in \mathcal{E}_X} f^*(H^1(A(S))),
\end{equation}
with the convention that $\{ 0\}$ is omitted when $\mathcal{E}_X \neq \emptyset$.

\subsection{Flat connections and resonance varieties}
\label{subsec:fla qp}
Now let $\g$ be a Lie algebra. Given an admissible map
$f\colon X\to S$, let us denote by $f^{!}$ the linear map
$f^1 \otimes \id_\g\colon A^1 (S) \otimes \g \to A^1 \otimes \g$.
Since the homomorphism $f^* \colon A^\hdot (S) \to A^\hdot$
is injective, the map $f^{!}$ is also injective. By naturality of the
parameter space for flat $\g$-connections,
the map $f^{!}$ restricts to an injective map
$f^{!} \colon \F(A^\hdot (S),\g) \hookrightarrow \F(A^\hdot,\g)$.
Furthermore,
\begin{equation}
\label{eq:foia}
f^{!}(\F(A (S), \g))=\F(A, \g) \cap f^*(A^1 (S)) \otimes \g .
\end{equation}

Let $\theta\colon \g\to \gl(V)$
be a finite-dimensional representation. We then have the following
analog of Proposition \ref{prop:feq} and Theorem \ref{thm:rincl}.

\begin{prop}
\label{prop:flatres qp}
Let $X$ be a quasi-projective manifold, and let $A^\hdot$ be a
convenient Gysin model for $X$, as above. Then,
the following inclusions hold:
\begin{align}
\label{eq:inclf}
\F(A, \g) &\supseteq \F^1(A, \g) \cup
\bigcup_{f\in \mathcal{E}_X} f^{!}(\F(A (S), \g)),
\\
\label{eq:inclr}
\RR^1_1(A, \theta) &\supseteq \Pi (A, \theta)\cup
\bigcup_{f\in \mathcal{E}_X} f^{!}(\F(A (S), \g)),
\end{align}
where, in both instances, $f$ runs through the set
of equivalence classes of admissible maps from $X$ to curves $S$
with $\chi(S)<0$.
Moreover, if $\theta=\id_\C$, then both these inclusions
become equalities.
\end{prop}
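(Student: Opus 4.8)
The plan is to obtain both inclusions almost formally from the machinery already developed, and then to observe that in the rank-$1$ case the inclusion \eqref{eq:inclr} is forced to be an equality by the known linear decomposition \eqref{eq:hpencils}.

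First I would dispose of \eqref{eq:inclf}. One has $\F^1(A,\g)\subseteq \F(A,\g)$ by definition, and for each admissible map $f\colon X\to S$ the map $f^{!}$ restricts to an injection $f^{!}\colon \F(A(S),\g)\inj \F(A,\g)$ by naturality of the parameter space for flat connections --- this is exactly the content of \eqref{eq:foia} --- so the whole right-hand side of \eqref{eq:inclf} sits inside $\F(A,\g)$. For \eqref{eq:inclr} there are two pieces. The inclusion $\Pi(A,\theta)\subseteq \RR^1_1(A,\theta)$ is immediate from Corollary \ref{cor:vdet} (specifically \eqref{eq:pi res}) together with the standing assumption $b_1(X)>0$, which gives $H^1(A)\cong H^1(X,\C)\neq 0$, so that the index $k=1$ does occur in the intersection appearing there. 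It remains to prove $f^{!}(\F(A(S),\g))\subseteq \RR^1_1(A,\theta)$ for each $f\in \mathcal{E}_X$.

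This last point is the heart of the matter, and it combines two ingredients. On the one hand, the canonical Gysin model $A(S)$ of the curve $S$ satisfies $A(S)^{>2}=0$ and $\chi(H^\hdot(A(S)))=\chi(S)<0$, the latter being precisely the admissibility condition on the target; hence Proposition \ref{prop:chi} applies and yields $\RR^1_1(A(S),\theta)=\F(A(S),\g)$ --- on a curve of negative Euler characteristic, depth-$1$ resonance fills the entire variety of flat connections. On the other hand, $f^{*}\colon A(S)\inj A$ is an injective morphism of \cdga's, hence an isomorphism in degree $0$ (both are connected) and a monomorphism in degree $1$, so Lemma \ref{lem:functr}\eqref{f2} (the case $q=0$) tells us that $f^{!}=\bar{f^{*}}$ carries $\RR^1_1(A(S),\theta)$ into $\RR^1_1(A,\theta)$. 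Combining the two, $f^{!}(\F(A(S),\g))=f^{!}(\RR^1_1(A(S),\theta))\subseteq \RR^1_1(A,\theta)$, which together with the preceding paragraph establishes \eqref{eq:inclf} and \eqref{eq:inclr}.

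For the equalities when $\theta=\id_{\C}$: here $\g=\C$ is one-dimensional and abelian, so $\F^1(A,\C)=\F(A,\C)=H^1(A)$ and \eqref{eq:inclf} becomes an equality for trivial reasons. For \eqref{eq:inclr} one has $\RR^1_1(A,\id_{\C})=\RR^1_1(A)$, $\Pi(A,\id_{\C})=\{0\}$, and $f^{!}(\F(A(S),\C))=f^{*}(H^1(A(S)))$, so the right-hand side of \eqref{eq:inclr} is precisely $\{0\}\cup\bigcup_{f\in \mathcal{E}_X} f^{*}(H^1(A(S)))$; by the decomposition \eqref{eq:hpencils} imported from \cite{DP} this is exactly $\RR^1_1(A)$, so \eqref{eq:inclr} is an equality. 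I expect no genuine obstacle here: the only non-formal inputs are the application of Proposition \ref{prop:chi} to the target curve and the transfer of resonance along the injective \cdga~map $f^{*}$ via Lemma \ref{lem:functr}\eqref{f2}, and the content of the ``equality'' half is carried entirely by \eqref{eq:hpencils}. The one subtlety worth keeping in mind is that in the higher-rank direction the argument yields only an inclusion; equality there is recovered only under the additional $1$-formality hypotheses used elsewhere in the paper.
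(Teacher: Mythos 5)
Your proposal is correct and follows essentially the same route as the paper's own (much terser) proof: the paper likewise derives \eqref{eq:inclr} from Corollary \ref{cor:vdet}, Lemma \ref{lem:functr}\eqref{f2}, and Proposition \ref{prop:chi}, and reduces the rank-one equality to \eqref{eq:hpencils}. Your write-up simply supplies the details the paper leaves implicit.
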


\begin{proof}
Inclusion \eqref{eq:inclf} is immediate, while inclusion \eqref{eq:inclr}
follows from Corollary \ref{cor:vdet} and Lemma \ref{lem:functr}(\ref{f2}),
combined with Proposition \ref{prop:chi}.

Finally, assume that $\theta=\id_\C$. Equality in \eqref{eq:inclf} is
then obvious, since $\F(A, \C)=\F^1(A, \C)$, while equality in \eqref{eq:inclr} 
is equivalent to equality \eqref{eq:hpencils}, which we know holds.
\end{proof}

\subsection{The $1$-formal situation}
\label{subsec:1fqp}
Now consider the case when $X$ is a $1$-formal, quasi-projective
manifold, for instance, a smooth, projective variety, or the complement
of a hypersurface in a complex projective space. In this case, Gysin models
may be efficiently replaced by cohomology rings endowed with the zero 
differential, as discussed in Example \ref{ex:formal}.

It follows from \cite[Theorem C and Proposition 7.2(3)]{DPS} that 
the cohomology algebra of a $1$-formal, quasi-projective manifold $X$ 
has linear resonance, in the sense of Definition \ref{def:linres}.  More precisely, 
the decomposition of $\RR^1_1(X)$ into (linear) irreducible components is 
given by
\begin{equation}
\label{eq:resx}
\RR^1_1(X)=\{0\} \cup \bigcup_{f\in \mathcal{E}_X}f^*(H^1(S,\C)),
\end{equation}
again with the convention that $\{0\}$ is omitted when 
$\mathcal{E}_X \neq \emptyset$.  Moreover, the induced 
homomorphism, $f^* \colon H^\hdot (S,\C) \to H^\hdot (X,\C)$,
is an embedding of cohomology rings, and $H^{>2} (S,\C)=0$.

\begin{corollary}
\label{cor:highres qp}
Let $X$ be a $1$-formal, quasi-projective manifold with $b_1(X)>0$.
Let $\g=\sl_2$ or $\sol_2$, and let $\theta \colon \g \to \gl(V)$ be a
finite-dimensional representation. Then, the following equalities hold:
\begin{align}
\label{eq:feq}
\F(H^\hdot (X,\C), \g) &= \F^1(H^\hdot (X,\C), \g) \cup
\bigcup_{f\in \mathcal{E}_X} f^{!}(\F(H^\hdot (S,\C), \g)),
\\
\label{eq:reseq}
\RR^1_1(H^\hdot (X,\C), \theta) &= \Pi (H^\hdot (X,\C), \theta)\cup
\bigcup_{f\in \mathcal{E}_X} f^{!}(\F(H^\hdot (S,\C), \g)),
\end{align}
where, in both instances, $f$ runs through the set
of equivalence classes of admissible maps from $X$ to curves $S$
with $\chi(S)<0$.
\end{corollary}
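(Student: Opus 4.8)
The plan is to feed the model $A=(H^{\hdot}(X,\C),d=0)$ into the machinery of Section~\ref{sect:linres} and then translate the output back into the geometric language of admissible maps onto curves. Since $X$ is $1$-formal, $A$ is a connected, $1$-finite $\cdga$ with $A^1=H^1(X,\C)\ne 0$ (because $b_1(X)>0$), and by \cite[Theorem C and Proposition 7.2(3)]{DPS} it has linear resonance, with $\RR^1_1(A)=\RR^1_1(X)$ decomposed as in \eqref{eq:resx}. First I would pin down the index set $\cC$ of irreducible components of $\RR^1_1(A)$: when $\mathcal{E}_X\ne\emptyset$ it is exactly the family $\{C_f:=f^{*}(H^1(S,\C))\mid f\in\mathcal{E}_X\}$, all of whose members are nonzero; when $\mathcal{E}_X=\emptyset$ it consists of the single subspace $\{0\}$.

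Next I would invoke Proposition~\ref{prop:feq} and Theorem~\ref{thm:rincl} (applicable since $\g=\sl_2$ or $\sol_2$, $d=0$, $A^1\ne 0$, and $A$ has linear resonance) to obtain
\[
\F(A,\g)=\F^1(A,\g)\cup\bigcup_{C\in\cC}\F(A_C,\g),\qquad
\RR^1_1(A,\theta)=\Pi(A,\theta)\cup\bigcup_{C\in\cC}\F(A_C,\g).
\]
In the degenerate case $\mathcal{E}_X=\emptyset$, the only term $\F(A_{\{0\}},\g)$ equals $\{0\}$, which is contained both in $\F^1(A,\g)$ and in $\Pi(A,\theta)$, so the two right-hand sides collapse to $\F^1(A,\g)$ and $\Pi(A,\theta)$, matching the empty unions in \eqref{eq:feq} and \eqref{eq:reseq}. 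Thus it remains to identify, for each $f\in\mathcal{E}_X$, the piece $\F(A_{C_f},\g)$ with $f^{!}(\F(H^{\hdot}(S,\C),\g))$.

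For this identification I would proceed as follows. From the proof of Lemma~\ref{lem:flat lin} one has $\F(A_{C_f},\g)=\F(A,\g)\cap(C_f\otimes\g)$. On the other hand, $1$-formality of $X$ makes $f^{*}\colon H^{\hdot}(S,\C)\to H^{\hdot}(X,\C)$ an embedding of cohomology rings with $H^{>2}(S,\C)=0$; hence $f^{!}=f^1\otimes\id_{\g}$ is injective, and the same naturality argument that established \eqref{eq:foia} gives $f^{!}(\F(H^{\hdot}(S,\C),\g))=\F(A,\g)\cap\bigl(f^{*}(H^1(S,\C))\otimes\g\bigr)=\F(A,\g)\cap(C_f\otimes\g)$. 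Comparing the two descriptions yields $\F(A_{C_f},\g)=f^{!}(\F(H^{\hdot}(S,\C),\g))$, and substituting this into the two decompositions above produces exactly \eqref{eq:feq} and \eqref{eq:reseq}.

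The only delicate point is this last identification, because the sub-$\cdga$ $A_{C_f}$ carries the \emph{entire} space $A^2=H^2(X,\C)$ in degree $2$, rather than the smaller space $f^{*}(H^2(S,\C))$, so a priori $\F(A_{C_f},\g)$ could be strictly smaller than a copy of $\F(H^{\hdot}(S,\C),\g)$. What rescues the argument is that, by Lemma~\ref{lem:cores}, flat connections depend only on the co-restriction of the multiplication $C_f\wedge C_f\to A^2$ to its image, combined with the facts that $f^{*}$ is a ring monomorphism and that $H^1(S,\C)\cdot H^1(S,\C)\subseteq H^2(S,\C)$. So the hard part is careful bookkeeping rather than any genuinely new computation; everything else is a direct appeal to the results of Section~\ref{sect:linres} and the decomposition \eqref{eq:resx} imported from \cite{DPS}.
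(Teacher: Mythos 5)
Your proposal is correct and follows essentially the same route as the paper: apply Proposition~\ref{prop:feq} and Theorem~\ref{thm:rincl} to $A=(H^{\hdot}(X,\C),d=0)$, use the decomposition \eqref{eq:resx} to identify the components $C$ with the subspaces $f^*(H^1(S,\C))$, and then identify $\F(A_C,\g)=\F(A,\C)\cap(C\otimes\g)$ with $f^{!}(\F(H^{\hdot}(S,\C),\g))$ via the injectivity of the ring map $f^*$. Your extra care with the degenerate case $\mathcal{E}_X=\emptyset$ and with the fact that $A_C$ carries all of $A^2$ in degree $2$ only makes explicit what the paper's terser proof leaves implicit.
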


\begin{proof}
For an admissible function $f\colon X\to S$, write $C=f^*(H^1(S,\C))$.
As noted in the proof of Lemma \ref{lem:flat lin}, the set $\F(H^\hdot (X,\C)_C, \g)$ 
coincides with $\F(H^\hdot (X,\C), \g) \cap C\otimes \g$.  In turn, 
the latter set coincides with $f^{!}(\F(H^\hdot (S,\C), \g))$, since 
$f^* \colon H^\hdot (S,\C) \to H^\hdot (X,\C)$ is injective.  
The result now follows from Proposition \ref{prop:feq} and Theorem \ref{thm:rincl},
together with formula \eqref{eq:resx}.
\end{proof}

\subsection{Flat connections on curves}
\label{subsec:curves}
Our next objective is to extract from formulas \eqref{eq:feq}
and \eqref{eq:reseq} the irreducible decompositions 
of the varieties $\F(H^\hdot (X,\C), \g)$ and 
$\RR^1_1(H^\hdot (X,\C), \theta)$.  Before proceeding, 
let us first analyze in more detail the contribution
made by each admissible map.

\begin{lemma}
\label{lem:fas}
Let $S$ be a curve with $\chi(S)<0$, and let $\g=\sl_2$ or $\sol_2$.  
Then $\F(H^\hdot (S,\C), \g)$ is irreducible
and contains regular elements.
\end{lemma}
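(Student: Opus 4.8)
The plan is to split the analysis according to whether the curve $S$ is compact (equivalently, projective) or not, since the condition $\chi(S)<0$ leaves exactly two possibilities.  If $S=\Sigma_g$ is projective of genus $g\ge 2$, then $H^{\hdot}(S,\C)=\C\oplus H^1\oplus H^2$ with $\dim H^1=2g$, $\dim H^2=1$, and the cup product $\langle\cdot,\cdot\rangle\colon H^1\wedge H^1\to H^2\cong\C$ is a nondegenerate alternating form.  If $S$ is open, then it is homotopy equivalent to a finite wedge of circles, so $H^{>1}(S,\C)=0$ and $\chi(S)=1-b_1(S)$, forcing $b_1(S)\ge 2$.  In both cases I would write $\omega=\alpha\otimes h+\beta\otimes x$ for $\g=\sol_2$, respectively $\omega=\alpha\otimes H+\beta\otimes X+\gamma\otimes Y$ for $\g=\sl_2$, and unwind the Maurer--Cartan equation using the structure constants of $\g$.

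In the open case, since $d=0$ we have $\partial\omega=0$, while $\tfrac12[\omega,\omega]\in A^2\otimes\g=H^2(S,\C)\otimes\g=0$; hence \emph{every} $\omega$ is flat and $\F(H^{\hdot}(S,\C),\g)=H^1(S,\C)\otimes\g$ is an affine space, so irreducible.  Because $b_1(S)\ge 2$ and $\dim\g\ge 2$, choosing linearly independent $\alpha_1,\alpha_2\in H^1(S,\C)$ and linearly independent $g_1,g_2\in\g$ produces a flat connection $\omega=\alpha_1\otimes g_1+\alpha_2\otimes g_2$ with $\psi(\omega)$ of rank $2$, hence $\omega\notin\F^1$ and $\omega$ is a regular element.

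In the compact case, a short computation of $\tfrac12[\omega,\omega]$ shows that $\omega$ is flat precisely when the vectors among $\{\alpha,\beta\}$ (for $\sol_2$), respectively $\{\alpha,\beta,\gamma\}$ (for $\sl_2$), are pairwise $\langle\cdot,\cdot\rangle$-orthogonal; since $\langle v,v\rangle=0$ automatically, this is the same as saying they span a $\langle\cdot,\cdot\rangle$-isotropic subspace of $H^1(S,\C)$.  Setting $n=2$ if $\g=\sol_2$ and $n=3$ if $\g=\sl_2$, a choice of basis of $\g$ therefore identifies $\F(H^{\hdot}(S,\C),\g)$ with the variety $Z_n=\{(v_1,\dots,v_n)\in (H^1(S,\C))^n\mid \langle v_i,v_j\rangle=0\text{ for all }i,j\}$.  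To see that $Z_n$ is irreducible, I would bring in the Lagrangian Grassmannian $\Lambda$ of $(H^1(S,\C),\langle\cdot,\cdot\rangle)$ --- an irreducible projective variety --- and the incidence variety $\mathcal{T}=\{(L,v_1,\dots,v_n)\mid L\in\Lambda,\ v_i\in L\}$, which is the $n$-fold Whitney sum of the tautological subbundle of $\Lambda$ and hence the total space of a vector bundle over an irreducible base, so irreducible.  The projection $\mathcal{T}\to (H^1(S,\C))^n$ has image exactly $Z_n$: vectors lying in a common Lagrangian are pairwise orthogonal, and conversely a pairwise orthogonal tuple spans an isotropic subspace, which --- being isotropic, hence of dimension $\le g$ --- extends to a Lagrangian.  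Being the image of an irreducible variety, $Z_n$ is irreducible.  Finally, taking $a_1,a_2$ to be two distinct members of a symplectic basis of $H^1(S,\C)$ (available since $g\ge 2$), the connection $\omega=a_1\otimes h+a_2\otimes x$ for $\sol_2$, respectively $\omega=a_1\otimes H+a_2\otimes X$ for $\sl_2$, is flat with $\psi(\omega)$ of rank $2$, hence regular.

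The case split, the flatness computations, and the construction of regular elements are all routine.  The one genuine point is the irreducibility of $Z_n$ in the compact case: there $\F(H^{\hdot}(S,\C),\g)$ is an honest, a priori reducible-looking intersection of quadrics, and the cleanest route I see is the Lagrangian--Grassmannian incidence fibration above.  (For $\g=\sol_2$ one can argue more directly that the single defining quadratic form has rank $4g\ge 8$ and is therefore irreducible as a polynomial, but for $\g=\sl_2$ the incidence-variety argument appears to be the way to go.)
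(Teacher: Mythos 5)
Your proposal is correct and follows essentially the same route as the paper: split into the non-compact case (where $H^2(S,\C)=0$ kills the Maurer--Cartan quadratic term, so $\F(H^\hdot(S,\C),\g)=H^1(S,\C)\otimes\g$ is an affine space) and the compact case (where flatness is equivalent to the tuple of $\g$-coordinates spanning a $\cup$-isotropic subspace of $H^1(S,\C)$), then prove irreducibility of the isotropic-tuple variety $Z_n$ by exhibiting it as the image of an irreducible variety. The only difference is in the packaging of that last step: the paper uses the surjection $\Phi\colon \Sp_{2g}\times U^r\to \F(H^\hdot(S,\C),\g)$ coming from the transitivity of $\Sp_{2g}$ on Lagrangians, whereas you dominate $Z_n$ by the total space of the $n$-fold tautological bundle over the Lagrangian Grassmannian; since the Lagrangian Grassmannian is a homogeneous space for $\Sp_{2g}$, these are the same argument in two guises, and both hinge on the same fact that every isotropic subspace extends to a Lagrangian. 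One small imprecision: in your construction of a regular element in the compact case, ``two distinct members of a symplectic basis'' is not quite enough, since $u_1$ and $v_1$ pair nontrivially and would not give a flat connection; you need two members spanning an isotropic plane, e.g.\ $u_1$ and $u_2$, which is exactly what $g\ge 2$ provides (and is clearly what your parenthetical intends).
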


\begin{proof}
First suppose $S$ is non-compact. Let $n =\dim H^1 (S,\C)$;
since $\chi(S)<0$, we must have $n\ge 2$.  Since $H^2 (S,\C)=0$,
the cup-product map on $H^1 (S,\C)$ is trivial. Hence,
$\F(H^\hdot (S,\C), \g)=\g^n$, and the desired conclusions follow,
since $1<\dim \g$.

Next, suppose $S$ is compact. Let $g=\frac{1}{2}\dim H^1 (S,\C)$
be the genus of the curve; since $\chi(S)<0$, we must have $g\ge 2$.
The cup-product map, $\cup \colon \bigwedge^2 H^1 (S,\C) \to H^2 (S,\C)=\C$, 
defines a symplectic inner product $\langle \cdot, \cdot \rangle$
on $H^1(S,\C)$. Let us fix a symplectic basis $\{u_1, v_1, \dots, u_g, v_g\}$,
so that $\langle u_i, u_j \rangle= \langle v_i, v_j \rangle=0$ and 
$\langle u_i, v_j \rangle = \delta_{ij}$ for all $i, j$. 
Let $\Sp_{2g}$ be the linear, connected automorphism group of
$\langle \cdot, \cdot \rangle$, and let $U = \spn\{u_1, \dots, u_g\}$ be
the associated maximal isotropic subspace.

Set $r=\dim \g$.  As we saw in the proof of Proposition \ref{prop:feq},
the subvariety $\F(H^\hdot (S,\C), \g) \subseteq H^1 (S,\C) \otimes \g$ 
may be identified with the set of $r$-tuples $(\alpha_1, \dots, \alpha_r) \in (H^1(S,\C))^r$
spanning an isotropic subspace with respect to $\cup$. Consider
the regular map
\begin{equation}
\label{eq:orb}
\Phi \colon \Sp_{2g} \times U^r \rightarrow \F(H^\hdot (S,\C), \g), \quad
(s, \beta_1, \dots, \beta_r)\mapsto (s\beta_1, \dots, s\beta_r).
\end{equation}
As is well-known (see for instance \cite[Ch.~I]{MH}),
every maximal isotropic subspace of $ H^1(S,\C)$
is of the form $s(U)$, for some $s \in \Sp_{2g}$. Thus, the map
$\Phi$ is surjective, and the desired conclusions follow.
\end{proof}

Note that our proof actually gives an explicit description of the subvariety 
$ f^{!}(\F(H^\hdot (S,\C), \g)) \subseteq \F(H^\hdot (X,\C), \g)$.

\subsection{Irreducible decomposition}
\label{subsec:irred decomp}
We are now in a position to state and prove the main results of
this section.

\begin{theorem}
\label{thm:qpflat}
Let $X$ be a $1$-formal, quasi-projective manifold with $b_1(X)>0$,
and let $\g=\sl_2$ or $\sol_2$. Then, the irreducible decomposition of
$\F(H^{\hdot}(X,\C),\g)$ is given by
\begin{equation*}
\F(H^{\hdot}(X,\C), \g) =
\begin{cases}
P(H^1(X,\C) \times \g) & \text{if $\RR_1^1 (X)=\{0\}$,}\\
f^{!} (\F(H^\hdot (S,\C), \g)) 
& \text{if $\RR_1^1 (X)=H^1(X,\C)$,}\\
P(H^1(X,\C) \times \g) \cup \bigcup_{f\in \mathcal{E}_X}
f^{!} (\F(H^\hdot (S,\C), \g)) & \text{otherwise,}
\end{cases}
\end{equation*}
where, in the second case, $f\colon X\to S$ is an admissible map 
to a curve $S$ with $\chi(S) <0$, and likewise in the third case.
\end{theorem}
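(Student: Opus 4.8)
The plan is to start from the global formula
$\F(H^{\hdot}(X,\C),\g)=\F^1(H^{\hdot}(X,\C),\g)\cup\bigcup_{f\in\mathcal{E}_X}f^{!}\bigl(\F(H^{\hdot}(S,\C),\g)\bigr)$
established in Corollary \ref{cor:highres qp} (equation \eqref{eq:feq}), and to refine it into an \emph{irreducible} decomposition by (i) checking each displayed piece is irreducible and (ii) discarding the redundant ones. For (i): $\F^1(H^{\hdot}(X,\C),\g)=P(H^1(X,\C)\times\g)$ is irreducible by Lemma \ref{lem:rk1 zar}\eqref{r1}, being a point or the cone over $\PP(H^1(X,\C))\times\PP(\g)$; each $f^{!}(\F(H^{\hdot}(S,\C),\g))$ is irreducible because $f^{!}=f^1\otimes\id_\g$ is an injective linear map and $\F(H^{\hdot}(S,\C),\g)$ is irreducible by Lemma \ref{lem:fas}. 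Throughout I abbreviate $C_f:=f^*(H^1(S,\C))\subseteq H^1(X,\C)$; by \eqref{eq:resx} and Arapura's theorem \cite{Ar}, the $C_f$ with $f\in\mathcal{E}_X$ are precisely the positive-dimensional irreducible components of $\RR^1_1(X)$, hence pairwise non-nested, and, as in the proof of Corollary \ref{cor:highres qp}, $f^{!}(\F(H^{\hdot}(S,\C),\g))=\F(H^{\hdot}(X,\C),\g)\cap(C_f\otimes\g)$.

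The three cases reflect the shape of $\RR^1_1(X)$ via \eqref{eq:resx}. If $\RR^1_1(X)=\{0\}$ then $\mathcal{E}_X=\emptyset$, so \eqref{eq:feq} reduces to $\F(H^{\hdot}(X,\C),\g)=P(H^1(X,\C)\times\g)$, already irreducible. If $\RR^1_1(X)=H^1(X,\C)$ then $b_1(X)>0$ forces $\mathcal{E}_X\neq\emptyset$; since $H^1(X,\C)$ is irreducible and the $C_f$ are pairwise non-nested linear subspaces with union $H^1(X,\C)$, there is exactly one $f\in\mathcal{E}_X$, and $C_f=H^1(X,\C)$. Then, using $d=0$ (so $A^1=H^1$), $f^{!}(\F(H^{\hdot}(S,\C),\g))=\F(H^{\hdot}(X,\C),\g)\cap(H^1(X,\C)\otimes\g)=\F(H^{\hdot}(X,\C),\g)$, which moreover contains $\F^1(H^{\hdot}(X,\C),\g)$; hence \eqref{eq:feq} collapses to the single irreducible component claimed.

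In the remaining case every $C_f$ is a proper, nonzero subspace of $H^1(X,\C)$ (were some $C_f=H^1(X,\C)$, then $\RR^1_1(X)=H^1(X,\C)$) and $\mathcal{E}_X\neq\emptyset$. Since an irreducible subset of a finite union of closed sets lies in one member, it suffices to exclude the inclusions among the pieces of \eqref{eq:feq} that would create redundancy. (a) $f^{!}(\F(H^{\hdot}(S,\C),\g))\nsubseteq\F^1(H^{\hdot}(X,\C),\g)$: by Lemma \ref{lem:fas}, $\F(H^{\hdot}(S,\C),\g)$ contains a regular element $\omega$, i.e.\ one not of the form $\eta\otimes g$; expanding $\omega=\sum_j u_j\otimes g_j$ in a basis $\{g_j\}$ of $\g$ this means $\dim\spn\{u_j\}\ge 2$, so, $f^1$ being injective, $f^{!}(\omega)=\sum_j f^1(u_j)\otimes g_j$ has $\dim\spn\{f^1(u_j)\}\ge 2$, whence $f^{!}(\omega)\notin P(H^1(X,\C)\times\g)$. (b) For distinct $f_1,f_2\in\mathcal{E}_X$ with curves $S_1,S_2$, one has $f_1^{!}(\F(H^{\hdot}(S_1,\C),\g))\nsubseteq f_2^{!}(\F(H^{\hdot}(S_2,\C),\g))$: since $d=0$, $\{\eta\otimes g:\eta\in C_{f_1}\}$ lies in $f_1^{!}(\F(H^{\hdot}(S_1,\C),\g))$ for any fixed $g\neq 0$, while $f_2^{!}(\F(H^{\hdot}(S_2,\C),\g))\subseteq C_{f_2}\otimes\g$; such an inclusion would force $C_{f_1}\subseteq C_{f_2}$, contradicting non-nestedness. (c) $\F^1(H^{\hdot}(X,\C),\g)\nsubseteq f^{!}(\F(H^{\hdot}(S,\C),\g))$: the left side contains $\eta\otimes g$ for every $\eta\in H^1(X,\C)$ while the right side lies in $C_f\otimes\g$, so the inclusion would force $H^1(X,\C)=C_f$. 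Combining (i) with (a)--(c), the pieces of \eqref{eq:feq} are pairwise irredundant irreducible closed subsets covering $\F(H^{\hdot}(X,\C),\g)$, so they are its irreducible components, which is the third formula.

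The only step I expect to be more than bookkeeping is (a): one must see that pulling back along an injective \cdga~map does not turn a genuinely higher-rank flat connection on the curve into an essentially rank-one one on $X$. This is precisely where it matters that in \eqref{eq:feq} the curve $S$ is a \emph{quotient} of $X$, so that $f^1$ is injective and $f^{!}$ preserves the rank of tensors, and where Lemma \ref{lem:fas} --- which supplies regular connections on $S$ --- does the real work; the rest is manipulation of \eqref{eq:feq}, \eqref{eq:resx}, the identity $f^{!}(\F(H^{\hdot}(S,\C),\g))=\F(H^{\hdot}(X,\C),\g)\cap(C_f\otimes\g)$, and elementary properties of irreducible decompositions.
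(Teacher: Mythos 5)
Your proposal is correct and follows essentially the same route as the paper: start from formula \eqref{eq:feq} of Corollary \ref{cor:highres qp}, establish irreducibility of each piece via Lemma \ref{lem:rk1 zar} and Lemma \ref{lem:fas}, and eliminate redundancies using the identity $f^{!}(\F(H^\hdot(S,\C),\g))=\F(H^{\hdot}(X,\C),\g)\cap (C_f\otimes\g)$, the non-nestedness of the components $C_f$ from \eqref{eq:resx}, and the existence of regular elements on the curves. Your checks (a)--(c) are exactly the three non-inclusions verified in the paper's argument, with the second case ($|\mathcal{E}_X|=1$) spelled out slightly more explicitly than the paper does.
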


\begin{proof}
In all three cases, the stated equalities follow from Corollary \ref{cor:highres qp},
formula \eqref{eq:feq}.  The irreducibility of  $P(H^1(X,\C) \times \g)$
is clear, while the irreducibility of $f^{!} (\F(H^\hdot (S,\C), \g))$ follows 
from Lemma \ref{lem:fas}.
It remains to check that there are no redundancies in the last case.

Let $f\colon X\to S$ and $f'\colon X\to S'$ be two admissible maps.
In formula \eqref{eq:resx}, set $C=f^*(H^1(S,\C))$ and $C'=f'^*(H^1(S',\C))$,
and assume $C \nsubseteq C'$. Since 
$f^{!}(\F(H^\hdot (S,\C), \g))= \F(H^{\hdot}(X,\C), \g) \cap C \otimes \g$,
and similarly for $f'$, we infer that $f^{!}(\F(H^\hdot (S,\C), \g)) \nsubseteq
f'^{!}(\F(H^\hdot (S',\C), \g))$. 

Now, since $\RR^1_1(X)\ne H^1(X,\C)$, we must have $C \neq H^1(X,\C)$.
By a similar support argument, 
$P(H^1(X,\C) \times \g) \nsubseteq f^{!}(\F(H^\hdot (S,\C), \g))$. Finally,
recall from Lemma \ref{lem:fas}  that $ f^{!}(\F(H^\hdot (S,\C), \g))$ contains
regular elements.  Hence,  $ f^{!}(\F(H^\hdot (S,\C), \g)) \nsubseteq 
P(H^1(X,\C) \times \g)$, and this completes the proof.
\end{proof}

\begin{theorem}
\label{thm:qpres}
Let $X$ be a $1$-formal, quasi-projective manifold with $b_1 (X)>0$,
and let $\theta\colon \sl_2\to \gl(V)$ be a finite-dimensional representation.
Then, the decomposition into irreducible components of 
$\RR_1^1 (H^{\hdot}(X,\C),\theta) $ is given by
\begin{equation*}
\RR_1^1 (H^{\hdot}(X,\C),\theta) =
\begin{cases}
\Pi(H^{\hdot}(X,\C),\theta) & \text{if $\RR_1^1 (X)=\{0\}$,}\\
f^{!} (\F(H^\hdot (S,\C), \sl_2)) 
& \text{if $\RR_1^1 (X)=H^1(X,\C)$,}\\
\Pi(H^{\hdot}(X,\C),\theta) \cup \bigcup_{f\in \mathcal{E}_X}
f^{!} (\F(H^\hdot (S,\C), \sl_2)) & \text{otherwise,}
\end{cases}
\end{equation*}
where, in the second case, $f\colon X\to S$ is an admissible map 
to a curve $S$ with $\chi(S) <0$, and likewise in the third case.
\end{theorem}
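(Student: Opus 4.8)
The plan is to read off the irreducible decomposition directly from the set-theoretic identity
\[
\RR_1^1(H^\hdot(X,\C),\theta)=\Pi(H^\hdot(X,\C),\theta)\cup\bigcup_{f\in\mathcal{E}_X}f^{!}(\F(H^\hdot(S,\C),\sl_2))
\]
furnished by Corollary~\ref{cor:highres qp} (applied with $\g=\sl_2$), proceeding exactly along the lines of the proof of Theorem~\ref{thm:qpflat}; the only genuinely new ingredient is the analysis of the piece $\Pi(H^\hdot(X,\C),\theta)$ in place of $\F^1(H^\hdot(X,\C),\sl_2)$. First I would check that each listed piece is irreducible. We have $\Pi(H^\hdot(X,\C),\theta)=P(H^1(X,\C)\times V(\det\circ\theta))$; since $b_1(X)>0$ the space $H^1(X,\C)$ is nonzero, and by Lemma~\ref{lem:det} the set $V(\det\circ\theta)$ is either all of $\sl_2$ or the irreducible hypersurface $V(\det)$, in either case a nonzero irreducible cone, so $\Pi(H^\hdot(X,\C),\theta)$ is the cone on $\PP(H^1(X,\C))\times\PP(V(\det\circ\theta))$ and hence irreducible. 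Likewise, for every admissible $f\colon X\to S$ the set $f^{!}(\F(H^\hdot(S,\C),\sl_2))$ is the image under the linear injection $f^{!}$ of the irreducible variety $\F(H^\hdot(S,\C),\sl_2)$ (Lemma~\ref{lem:fas}), hence irreducible, and by the same lemma it contains regular elements of $\F(H^\hdot(X,\C),\sl_2)$.

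I would then dispose of the two degenerate cases. If $\RR_1^1(X)=\{0\}$, formula~\eqref{eq:resx} forces $\mathcal{E}_X=\emptyset$, because every $f^*(H^1(S,\C))$ has dimension at least $2$ (as $\chi(S)<0$), so the union above reduces to the single irreducible piece $\Pi(H^\hdot(X,\C),\theta)$. If $\RR_1^1(X)=H^1(X,\C)$, then, since the irreducible components of $\RR_1^1(X)$ are linear subspaces and distinct admissible maps yield distinct, hence pairwise incomparable, such components, exactly one admissible map $f$ can occur and it must satisfy $C:=f^*(H^1(S,\C))=H^1(X,\C)$; consequently $f^{!}(\F(H^\hdot(S,\C),\sl_2))=\F(H^\hdot(X,\C),\sl_2)\cap C\otimes\sl_2=\F(H^\hdot(X,\C),\sl_2)\supseteq\F^1(H^\hdot(X,\C),\sl_2)\supseteq\Pi(H^\hdot(X,\C),\theta)$, so the union collapses to the single irreducible piece $f^{!}(\F(H^\hdot(S,\C),\sl_2))$.

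In the remaining case each $C=f^*(H^1(S,\C))$ is a proper subspace of $H^1(X,\C)$, and I would rule out every inclusion among the listed pieces by the support-and-regularity arguments already used for Theorem~\ref{thm:qpflat}: $\Pi(H^\hdot(X,\C),\theta)\not\subseteq f^{!}(\F(H^\hdot(S,\C),\sl_2))$ because $\eta\otimes g$ with $\eta\in H^1(X,\C)\setminus C$ and $g$ a nonzero element of $V(\det\circ\theta)$ (e.g.\ the nilpotent $X$) lies in $\Pi$ but not in $C\otimes\sl_2$; $f^{!}(\F(H^\hdot(S,\C),\sl_2))\not\subseteq\Pi(H^\hdot(X,\C),\theta)$ because the left side contains regular elements while $\Pi\subseteq\F^1(H^\hdot(X,\C),\sl_2)$; and $f^{!}(\F(H^\hdot(S,\C),\sl_2))\not\subseteq f'^{!}(\F(H^\hdot(S',\C),\sl_2))$ for distinct admissible maps, because $C\not\subseteq C'$ by incomparability of components, so for $\eta\in C\setminus C'$ and any nonzero $g\in\sl_2$ the flat connection $\eta\otimes g$ lies in $f^{!}(\F(H^\hdot(S,\C),\sl_2))$ but not in $f'^{!}(\F(H^\hdot(S',\C),\sl_2))$. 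Since no piece is then contained in the union of the others, the displayed expression is the irreducible decomposition. The step I expect to require the most care is the case $\RR_1^1(X)=H^1(X,\C)$: one must argue both that $\mathcal{E}_X$ is a singleton and that the corresponding $f^{!}(\F(H^\hdot(S,\C),\sl_2))$ genuinely exhausts $\F(H^\hdot(X,\C),\sl_2)$, thereby absorbing $\Pi$; once this is in place the non-redundancy checks in the generic case are routine.
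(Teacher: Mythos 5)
Your proposal is correct and follows essentially the same route as the paper: the set-theoretic identity from Corollary \ref{cor:highres qp}, irreducibility of the pieces via Lemma \ref{lem:det} and Lemma \ref{lem:fas}, and non-redundancy via the support and regularity arguments from Theorem \ref{thm:qpflat}, supplemented by the check $\Pi(H^{\hdot}(X,\C),\theta)\nsubseteq f^{!}(\F(H^\hdot(S,\C),\sl_2))$ using $\eta\otimes g$ with $\eta\notin f^*(H^1(S,\C))$ and $0\ne g\in V(\det\circ\theta)$. Your treatment of the two degenerate cases is more explicit than the paper's, but the argument is the same.
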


\begin{proof}
In all three cases, the stated equalities follow from Corollary \ref{cor:highres qp},
formula \eqref{eq:reseq}.  Furthermore, the irreducibility of $\Pi(H^{\hdot}(X,\C),\theta)$ 
follows from Lemma \ref{lem:det}, while the irreducibility of $f^{!} (\F(H^\hdot (S,\C), \sl_2))$
follows from Lemma \ref{lem:fas}.

It remains to check that there are no redundancies in the last equality.
In view of Theorem \ref{thm:qpflat}, it is enough to verify
that $\Pi(H^{\hdot}(X,\C), \theta) \nsubseteq f^{!}(\F(H^\hdot (S,\C), \sl_2))$,
where $\theta$ is the inclusion $\sl_2 \inj \gl_2$. 

By our assumption on $\RR_1^1(X)$, there is an element 
$\eta \in H^1(X,\C) \setminus f^* (H^1(S,\C))$. 
Pick a non-zero matrix $g\in \sl_2$ such that 
$\det (\theta(g))=0$, and set
$\omega=\eta \otimes g \in \Pi(H^{\hdot}(X,\C), \theta)$.
Clearly, $\omega \notin f^{!}(\F(H^\hdot (S,\C), \sl_2))$,
and so we are done.
\end{proof}

\subsection{Discussion}
\label{subsec:notpull}
Let us compare the rank $1$ irreducible decomposition \eqref{eq:resx}
with the decomposition from Theorem \ref{thm:qpres}.  To that end,
let $A=(H^\hdot (X,\C), d=0)$ be the formal model of a $1$-formal,
quasi-projective manifold with $b_1(X)>0$, and let $V_{\theta}$ be
a finite-dimensional $\sl_2$-module.  Assuming $\RR_1^1 (X)$ is
neither $\{0\}$, nor $A^1$, we see that the irreducible components
of $\RR_1^1(A, \theta)$ either arise by pullback along admissible
maps, or are equal to $\Pi(A, \theta)$. When $\theta=\id_\C$,
only the first possibility occurs.

Both in the rank $1$ case and the nonabelian situation, geometric irreducible
components (to be denoted $\W_f$) have the following property: There is an 
epimorphism of Lie algebras, $q \colon \h(A) \twoheadrightarrow \kk$, such that
$W_f=q^*(\Hom_{\Lie}(\kk, \sl_2))$. This assertion may be verified as follows.
First, note that $\W_f=f^{!}(\F(A(S), \sl_2))$, by construction, where
$A(S)=(H^\hdot (S,\C), d=0)$ and $f^* \colon A^{\hdot}(S) \inj A^\hdot$
is injective.  Finally, we may use the natural identification from
Proposition \ref{prop:flathol}, and put $\kk=\h(A(S))$ and $q= \h(f^*)$.

Next, we note that the irreducible component $W =  \Pi (A, \theta)$
has a different qualitative behavior from this pullback point of view. Indeed, let
$A$ be an arbitrary $1$-finite $\cdga$ with $H^1 (A) \neq 0$ and let
$\theta \colon \sl_2 \to \gl_2$ be the inclusion map. We claim that
there is no epimorphism $q \colon \h(A) \surj \kk$ such that
$W=q^*(\Hom_{\Lie}(\kk, \sl_2))$.

Indeed, using the definition of the holonomy Lie algebra we may write
$\kk=\L(A_1) \slash R$, for some ideal $R$. Let
$\init(R) \subseteq A_1$ be the vector subspace generated by
the degree $1$ components of  all elements $r \in R$.  If
$\init(R) \neq  A_1$, pick a basis $\{x_1, \dots, x_n\}$ of $A_1$
such that $\{x_1, \dots, x_m\}$, for some $m<n$, form a basis of
$\init(R)$. Next, take a matrix $g \in \sl_2$ with $\det(\theta(g))\ne 0$, and define
a morphism $\rho \colon \kk \to \sl_2$ by setting $\rho(x_i)=0$ for $i \leq m$ and
$\rho(x_i)=g$, for $i > m$. Clearly, $q^*(\rho) \notin W$. If $\init(R)=A_1$,
the equality $W=q^*(\Hom_{\Lie}(\kk, \sl_2))$ would imply $W=0$, by
Proposition \ref{prop:flathol}, thereby contradicting
the assumption that $H^1 (A) \neq 0$.

\subsection{Positive weights}
\label{subset:weights}
We now come back to the general quasi-projective case, as described
in Proposition \ref{prop:flatres qp}. The global results of Corlette
and Simpson from \cite{CS}, together with the local Theorem \ref{thm:b1}, 
suggest that the analysis of the parameter space for flat, $\g$-valued 
connections on the Gysin model $A^{\hdot}$ is much more complicated 
than in the $1$-formal case, even when $\g=\sl_2$.

A useful feature of Gysin models is the existence of a weight decomposition
with weights $1$ and $2$ in degree $1$ (cf.~Morgan \cite{Mo}). This means
we have a direct sum decomposition, $A^\hdot= \bigoplus_{j \in \Z} A ^\hdot_j$,
such that $A ^\hdot_j \cdot A ^\hdot_k \subseteq A ^\hdot_{j+k}$,
$dA^\hdot_j \subseteq A ^\hdot_j$ and $A ^1_j=0$ for $j \neq 1,2$.
Our next result implies that inclusion \eqref{eq:inclf} becomes an equality for
$\g=\sl_2$ or $\sol_2$, in the case when $X$ is quasi-projective
and $\RR_1^1(X)=\{0\}$. Indeed, Theorem \ref{thm:tcone}
guarantees that  $\RR_1^1 (A)=\{0\}$, hence the right hand side
in \eqref{eq:inclf} reduces to $\F^1(A, \g)$.

\begin{prop}
\label{prop:nopencils}
Let $A^\hdot$ be a connected \cdga~admitting a weight decomposition with weights
$1$ and $2$ in degree $1$, and suppose $\RR^1_1(H^\hdot(A)) \subseteq \{0\}$.
Then $\F(A, \g)=\F^1(A, \g)$, for $\g=\sl_2$ or $\sol_2$.
\end{prop}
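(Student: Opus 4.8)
The plan is to use the weight decomposition to split the Maurer--Cartan equation into weight-homogeneous pieces, push each piece into cohomology, and then invoke the hypothesis $\RR^1_1(H^\hdot(A))\subseteq\{0\}$ together with Remark~\ref{rem:comm}. Fix $\g=\sl_2$ or $\sol_2$, and let $\omega\in\F(A,\g)$. Since $A^1=A^1_1\oplus A^1_2$, write $\omega=\omega_1+\omega_2$ with $\omega_j\in A^1_j\otimes\g$. Because $d$ preserves weights and the product of $A$ is additive on weights, the term $\partial\omega_1$ has weight $1$, the terms $\partial\omega_2$ and $\tfrac{1}{2}[\omega_1,\omega_1]$ have weight $2$, the term $[\omega_1,\omega_2]$ has weight $3$, and $\tfrac{1}{2}[\omega_2,\omega_2]$ has weight $4$. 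Since these land in distinct summands of $A^2=\bigoplus_j A^2_j$, the Maurer--Cartan equation \eqref{eq:flat} for $\omega$ decouples into
\begin{equation*}
\partial\omega_1=0,\qquad \partial\omega_2+\tfrac{1}{2}[\omega_1,\omega_1]=0,\qquad
[\omega_1,\omega_2]=0,\qquad [\omega_2,\omega_2]=0.
\end{equation*}

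First I would analyze $\omega_1$. The first equation says that, in a basis of $\g$, the components of $\omega_1$ are $d$-closed, hence define classes in $H^1(A)=Z^1(A)$. The second equation shows $\tfrac{1}{2}[\omega_1,\omega_1]\in(\im d)\otimes\g$; expanding $\omega_1$ in the standard basis of $\g$ and comparing coefficients, this forces every pairwise product of the components of $\omega_1$ to lie in $\im d$, so the subspace $U_1\subseteq H^1(A)$ spanned by the corresponding classes is totally isotropic for the cup product $H^1(A)\wedge H^1(A)\to H^2(A)$. If $\dim U_1\ge 2$, two linearly independent classes in $U_1$ would have vanishing cup product, placing a nonzero class in $\RR^1_1(H^\hdot(A))$, which is impossible by hypothesis; hence $\dim U_1\le 1$, i.e.\ $\omega_1=\eta_1\otimes g_1$ for some $d$-closed $\eta_1\in A^1_1$ and $g_1\in\g$. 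Then $[\omega_1,\omega_1]=\eta_1\eta_1\otimes[g_1,g_1]=0$, so the second equation collapses to $\partial\omega_2=0$; thus the components of $\omega_2$ are $d$-closed as well, and the fourth equation $[\omega_2,\omega_2]=0$ makes all their pairwise products vanish in $A^2$. Running the same isotropy/resonance argument once more gives $\omega_2=\eta_2\otimes g_2$ for some $d$-closed $\eta_2\in A^1_2$ and $g_2\in\g$.

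It remains to exploit the third equation, which now reads $[\omega_1,\omega_2]=\eta_1\eta_2\otimes[g_1,g_2]=0$ in $A^2_3\otimes\g$. If $\eta_1=0$ or $g_1=0$, then $\omega=\omega_2=\eta_2\otimes g_2\in\F^1(A,\g)$. If $\eta_1\eta_2\ne 0$ in $A^2$, then $[g_1,g_2]=0$, so by Remark~\ref{rem:comm} we have $\rank\{g_1,g_2\}\le 1$, whence $g_2=tg_1$ for some $t\in\C$ and $\omega=(\eta_1+t\eta_2)\otimes g_1\in\F^1(A,\g)$. Finally, if $\eta_1\eta_2=0$ in $A^2$, then $\bar\eta_1\cup\bar\eta_2=0$ in $H^2(A)$ with $\bar\eta_1\ne 0$; since $\RR^1_1(H^\hdot(A))\subseteq\{0\}$ this forces $\bar\eta_2\in\C\bar\eta_1$, but $\bar\eta_1$ has weight $1$ and $\bar\eta_2$ has weight $2$, so $\bar\eta_2=0$, hence $\eta_2=0$ and $\omega=\eta_1\otimes g_1\in\F^1(A,\g)$. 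As the inclusion $\F^1(A,\g)\subseteq\F(A,\g)$ is automatic, this yields the asserted equality.

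The step I expect to require the most care is the decoupling of the Maurer--Cartan equation: one must check that the five terms above genuinely lie in the weight summands $A^2_1,A^2_2,A^2_3,A^2_4$ as claimed (with the two weight-$2$ terms allowed to interact), so that projecting \eqref{eq:flat} onto each summand yields exactly the four displayed equations. Everything else reduces to the short bracket computation in $\sl_2$ (resp.\ $\sol_2$), two applications of the resonance hypothesis, and one application of Remark~\ref{rem:comm}.
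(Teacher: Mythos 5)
Your proof is correct and follows essentially the same route as the paper's: split the Maurer--Cartan equation into its weight $1$ through $4$ components, use the resonance hypothesis to force the weight-$1$ and weight-$2$ parts of $\omega$ to be simple tensors $\eta_j\otimes g_j$, and then use the weight-$3$ equation (together with the fact that in $\sl_2$ or $\sol_2$ commuting elements are proportional, i.e.\ Remark~\ref{rem:comm}) to merge the two pieces into a single simple tensor. The only cosmetic difference is that you keep the Lie-algebra coordinates implicit and treat $\sl_2$ and $\sol_2$ uniformly, whereas the paper writes out the explicit system \eqref{eq:qpmc}--\eqref{eq:qpmc4} for $\sl_2$; your step ``every pairwise product of the components of $\omega_1$ lies in $\im d$'' does rely on the brackets of distinct standard basis vectors being supported on distinct basis vectors, which holds for these two Lie algebras and is exactly what the paper's explicit equations record.
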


\begin{proof}
We first treat the case $\g=\sl_2$. An arbitrary element $\omega \in \F(A, \g)$
can then be written as $\alpha \otimes X + \beta \otimes Y + \gamma \otimes H$, with
$\alpha, \beta, \gamma \in A^1$. It is readily checked that the Maurer--Cartan
equation \eqref{eq:flat} is now equivalent to the following system of equations:
\begin{equation}
\label{eq:qpmc}
d \gamma + \alpha \beta =0, \; d \alpha+ 2\gamma \alpha=0, \; d \beta-2 \gamma \beta=0
\end{equation}

We have to show that $\rank \{\alpha, \beta,\gamma\} \leq 1$.
To this end, let $\alpha =\alpha_1+\alpha_2$, $\beta=\beta_1+\beta_2$, and
$\gamma=\gamma_1 + \gamma_2$ be the respective weight decompositions.
The weight $1$ part of \eqref{eq:qpmc} is
\begin{equation}
\label{eq:qpmc1}
d \alpha_1=d \beta_1=d \gamma_1=0
\end{equation}

The weight $2$ component is
\begin{equation}
\label{eq:qpmc2}
d \gamma_2 + \alpha_1 \beta_1 = d \alpha_2+2 \gamma_1 \alpha_1=
d \beta_2- 2 \gamma_1\beta_1=0.
\end{equation}

For the weight $3$ component we find that
$\alpha_1 \beta_2- \beta_1  \alpha_2 =  \gamma_1 \alpha_2 - \alpha_1 \gamma_2 =
\gamma_1\beta_2 - \beta_1 \gamma_2=0$,
and, finally, the weight $4$ component gives
\begin{equation}
\label{eq:qpmc4}
 \alpha_2 \beta_2=  \gamma_2 \alpha_2= \gamma_2 \beta_2 =0.
\end{equation}

Using our hypothesis on $\RR^1_1(H^\hdot (A))$, we infer
from \eqref{eq:qpmc1} and  \eqref{eq:qpmc2} that
$\rank \{\alpha_1, \;\beta_1, \; \gamma_1\} \leq 1$ and
$d \alpha_2 = d \beta_2=  d \gamma_2 =0$. Using \eqref{eq:qpmc4},
the same argument implies that $\rank \{\alpha_2, \;\beta_2, \; \gamma_2\} \leq 1$.
If $\rank \{\alpha_1, \;\beta_1, \; \gamma_1\}=0$ (respectively,
$\rank \{\alpha_2, \;\beta_2, \; \gamma_2\}=0 $), we are done.
Thus, we may assume that $\alpha_1=a \omega_1$, $\beta_1=b \omega_1$,
$\gamma_1= c \omega_1$, with $0 \neq \omega_1 \in A^1_1$, $d \omega_1=0$
and $(a,b,c) \neq 0$. Similarly, we may write $\alpha_2=a' \omega_2$,
$\beta_2=b' \omega_2$, $\gamma_2=c' \omega_2$, with
$0 \neq \omega_2 \in A_2^1$, $d \omega_2=0$, and $(a',b',c') \neq 0$.

Using \eqref{eq:qpmc}, we deduce that either $\omega_1 \omega_2=0$,
or $ab'-ba'=ca'-ac'=cb'-bc'=0$. In the first case, we must have
$\rank\{\omega_1, \; \omega_2\}=2$, contradicting the
assumption that $\RR^1_1(H^\hdot (A)) \subseteq \{0\}$. In the
second case, $(a',b',c')= \lambda (a,b,c)$, which implies
$\omega \in \F^1(A, \g)$ and completes the proof in the
case when $\g=\sl_2$.

When $\g=\sol_2$, an element $\omega \in \F(A, \g)$ may be written as
$\omega= \alpha \otimes h + \beta \otimes x$, with $\alpha, \beta \in A^1$.
In this case, the Maurer--Cartan equation \eqref{eq:flat} is equivalent to
the system of equations $d \alpha=0$ and $d \beta+ \alpha \beta =0$.
An argument as before shows that $\rank\{\alpha, \;\beta\} \leq 1$,
thereby completing the proof.
\end{proof}

We obtain as a corollary another class of examples where higher rank embedded
jump loci are controlled by information on the corresponding rank $1$ loci.

\begin{corollary}
\label{cor=finalqp}
Let $X$ be a quasi-projective manifold with $b_1 (X)>0$, 
and let $\theta\colon \g \to \gl(V)$ be a finite-dimensional representation,
where $\g=\sl_2$ or $\sol_2$. Assume $\RR_1^1(X)=\{0\}$. Then
the inclusions from Proposition \ref{prop:flatres qp} become equalities, 
for any Gysin model $A$ of $X$. More precisely,
\begin{equation}
\label{eq:nopen}
\F(A, \g) = \F^1(A, \g) \quad \text{and} \quad \RR^1_1(A, \theta)= \Pi(A, \theta).
\end{equation}
\end{corollary}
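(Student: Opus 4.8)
The plan is to deduce Corollary~\ref{cor=finalqp} from Proposition~\ref{prop:nopencils} and Proposition~\ref{prop:flatres qp}, using the tangent cone result Theorem~\ref{thm:tcone} to bridge the gap between the hypothesis on $\RR_1^1(X)$ (a statement about the cohomology ring) and the hypothesis needed in Proposition~\ref{prop:nopencils} (a statement about $\RR_1^1(H^\hdot(A))$). First I would fix a Gysin model $A$ of $X$; by Example~\ref{ex:qproj} such a model exists, it is $\infty$-finite, defined over $\Q$, has positive weights (indeed the weight decomposition in degree $1$ uses only weights $1$ and $2$, cf.~\S\ref{subset:weights}), and satisfies $\Omega^\hdot(X)\simeq_\infty A$ with the $\Q$-structure on $H^1$ preserved. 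So Theorem~\ref{thm:tcone}\eqref{tc2} applies: $\RR^1_1(A)\subseteq \RR^1_1(H^\hdot(A))$. Since $X$ is $1$-finite with $b_1(X)>0$ and $\RR_1^1(X)=\RR_1^1(H^\hdot(X,\C))=\{0\}$ by hypothesis, and since $H^\hdot(A)\cong H^\hdot(X,\C)$ as graded rings, we get $\RR^1_1(H^\hdot(A))=\{0\}$, hence $\RR^1_1(A)\subseteq\{0\}$ too.

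Next I would invoke Proposition~\ref{prop:nopencils} with the $\cdga$ $A$: it has a weight decomposition with weights $1$ and $2$ in degree $1$, and we have just shown $\RR^1_1(H^\hdot(A))\subseteq\{0\}$, so the proposition yields $\F(A,\g)=\F^1(A,\g)$ for $\g=\sl_2$ or $\sol_2$. This establishes the first equality in \eqref{eq:nopen}, and, via Proposition~\ref{prop:flatres qp} (inclusion \eqref{eq:inclf}), also shows that the set $\mathcal{E}_X$ of admissible maps must be empty in this situation --- or at least that its contribution $f^{!}(\F(A(S),\g))$ is already contained in $\F^1(A,\g)$, which is all we need.

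For the resonance statement, I would start from inclusion \eqref{eq:inclr} of Proposition~\ref{prop:flatres qp}, which combined with $\F(A,\g)=\F^1(A,\g)$ gives $\Pi(A,\theta)\subseteq\RR^1_1(A,\theta)$. For the reverse inclusion, take $\omega\in\RR^1_1(A,\theta)\subseteq\F(A,\g)=\F^1(A,\g)$, so $\omega=\eta\otimes g$ with $\eta\in H^1(A)$ and $g\in\g$. By Corollary~\ref{cor:vdet}, there is an eigenvalue $\lambda$ of $\theta(g)$ with $\lambda\eta\in\RR^1_1(A)$. Since $\RR^1_1(A)\subseteq\{0\}$, either $\lambda=0$ or $\eta=0$; in the latter case $\omega=0\in\Pi(A,\theta)$ trivially, while in the former case $0$ is an eigenvalue of $\theta(g)$, so $\det(\theta(g))=0$, i.e. $\omega=\eta\otimes g\in\Pi(A,\theta)$ by definition \eqref{eq:piatheta}. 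Thus $\RR^1_1(A,\theta)=\Pi(A,\theta)$, completing the proof.

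The only genuinely substantive point is the application of Theorem~\ref{thm:tcone}\eqref{tc2}: one must make sure the Gysin model $A$ really satisfies all its hypotheses --- rational definedness, compatibility of the $\Q$-structure on $H^1$ with $\Omega^\hdot(X)$, and positive weights --- which is where I would be most careful, citing Morgan~\cite{Mo} and \cite{DP} for the requisite properties of Gysin models. Everything else is a formal chain of already-proven implications, so I do not anticipate any real obstacle beyond correctly quoting the structural facts about Gysin models.
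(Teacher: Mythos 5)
Your proof is correct and follows essentially the same route as the paper's: Theorem \ref{thm:tcone}\eqref{tc2} (via the standard properties of Morgan's Gysin models) gives $\RR^1_1(A)\subseteq\{0\}$, Proposition \ref{prop:nopencils} gives the first equality, and Corollary \ref{cor:vdet} gives both inclusions of the second. The only cosmetic difference is that the paper also records $\mathcal{E}_X=\emptyset$ via \eqref{eq:hpencils}, whereas you observe that the admissible-map contributions are absorbed into $\F^1(A,\g)$ anyway; both dispositions are fine.
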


\begin{proof}
By Theorem \ref{thm:tcone}, $\RR_1^1 (A)=\{0\}$; thus, 
by formula \eqref{eq:hpencils},  $\mathcal{E}_X= \emptyset$. 
Hence, all we have to do is to prove equalities \eqref{eq:nopen}.

The first equality follows from Proposition \ref{prop:nopencils}, 
while the inclusion $\Pi(A, \theta) \subseteq \RR^1_1(A, \theta)$ 
follows from Corollary \ref{cor:vdet}.  To check the reverse inclusion, 
pick $0\neq \omega =\eta \otimes g \in \RR^1_1(A, \theta)$.
Again by Corollary \ref{cor:vdet}, we find an eigenvalue $\lambda$ 
of $\theta(g)$ such that $\lambda \eta \in \RR^1_1(A)$. Hence, 
$\lambda =0$, and therefore $\omega \in \Pi(A, \theta)$.
\end{proof}

For a detailed study of algebras having vanishing resonance, we refer 
to \cite{PS-crelle}. Finally, it should be noted that the classical, depth $1$ 
resonance varieties  behave well under finite products 
and coproducts, see \cite[\S 13]{PS-plms}. Using some of the machinery 
developed in this paper, we show in \cite{PS-prod} that analogous product 
and coproduct formulas hold in the non-abelian setting. 

\begin{ack}
Some of this work was done while S.~Papadima and A.~Suciu visited 
the Max Planck Institute for Mathematics in Bonn in April--May 2012. 
The work was continued while A.S. visited the Institute of 
Mathematics of the Romanian Academy in June, 2012 and 
June, 2013. The work was completed while A.S. visited 
again MPIM Bonn in September--October 2013.  Thanks are 
due to both institutions for their hospitality, support, and excellent 
research atmosphere.   We also thank the two referees for their 
useful remarks and suggestions.
\end{ack}

\newcommand{\arxiv}[1]
{\texttt{\href{http://arxiv.org/abs/#1}{arxiv:#1}}}
\renewcommand{\MR}[1]
{\href{http://www.ams.org/mathscinet-getitem?mr=#1}{MR#1}}
\newcommand{\doi}[1]
{\texttt{\href{http://dx.doi.org/#1}{doi:#1}}}

\end{document}